\documentclass{amsart}

\usepackage[utf8x]{inputenc}
\usepackage{amssymb,latexsym}
\usepackage{amsmath}
\usepackage{graphicx}
\usepackage{amsthm}
\usepackage[shortlabels]{enumitem}
\usepackage{appendix}
\usepackage{comment}
\usepackage[foot]{amsaddr}

\newtheorem{theorem}{Theorem}[section]
\newtheorem{lemma}[theorem]{Lemma}
\newtheorem{proposition}[theorem]{Proposition}
\newtheorem{conjecture}[theorem]{Conjecture}
\newtheorem{corollary}[theorem]{Corollary}

\newtheorem{claim}{Claim}[theorem]

\newtheorem{notation}[theorem]{Notation}
\newtheorem{definition}[theorem]{Definition}

\newtheorem{question}[theorem]{Question}
\theoremstyle{remark}
\newtheorem{remark}[theorem]{Remark}
\newtheorem{example}[theorem]{Example}
\newtheorem{examples}[theorem]{Examples}

\newcommand{\R}{\mathbb{R}}
\newcommand{\M}{\mathcal{M}}

\newcommand{\ch}{\mathcal{H}}

\renewcommand{\geq}{\geqslant}

\renewcommand{\ge}{\geqslant}
\renewcommand{\le}{\leqslant}

\def\P{\mathbb{P}}
\def\L{\mathbb{L}}
\def\U{\mathbb{U}}

\def\V{\mathbb{V}}

\def\b1{\bar{1}}
\def\cb1{\cdot \bar{1}}

\usepackage{array,color,colortbl}

\newcommand{\cP}{\mathbb{P}}

\newcommand{\cH}{\mathcal{H}}

\newcommand{\refT}[1]{Theorem~\ref{#1}}
\newcommand{\refC}[1]{Corollary~\ref{#1}}
\newcommand{\refCl}[1]{Claim~\ref{#1}}
\newcommand{\refL}[1]{Lemma~\ref{#1}}

\newcommand{\refS}[1]{Section~\ref{#1}}

\newcommand{\refE}[1]{Example~\ref{#1}}

\newcommand{\refCon}[1]{Conjecture~\ref{#1}}

\newcommand{\cplus}{\boxtimes}

\title{Looms}

\author{Ron Aharoni}\thanks{The research of R. Aharoni, E. Berger, and S. Zerbib was supported by BSF grant no.
2006099.\\ The research of R. Aharoni was
supported by  BSF grant no.
2006099, by an ISF grant and by the Discount Bank
Chair at the Technion.\\
The research of S. Zerbib was supported by NSF CAREER award no. 2336239,
NSF award no. DMS-195392,
and Simons Foundation award no. MP-TSM-00002629.
}
\address{Ron Aharoni: Faculty of Mathematics\\ Technion - Israel Institute of Technology, Haifa, Israel. \email[]{raharoni@gmail.com}}

\author{Eli Berger}
\address{Eli Berger: Department of Mathematics\\ University of Haifa, Haifa, Israel.
\email[]{berger.haifa@gmail.com}}

\author{Joseph Briggs}
\address{Joseph Briggs: Department of Mathematics and Statistics\\ Auburn University, Auburn AL, USA.
\email[]{jgb0059@auburn.edu}}

\author{He Guo}
\address{He Guo: Faculty of Mathematics\\ Technion - Israel Institute of Technology, Haifa, Israel.
\email[]{hguo@campus.technion.ac.il}}
\author{Shira Zerbib }
\address{Shira Zerbib: Department of Mathematics \\ Iowa State University, Ames IA,  USA.
\email[]{zerbib@iastate.edu}}
\date{September 6, 2023; revised on July 12, 2024}

\begin{document}

\begin{abstract}
  A pair $(A,B)$ of hypergraphs is called \emph{orthogonal} if $|a \cap b|=1$  for every pair of edges $a \in A,~b \in B$. An orthogonal pair of hypergraphs is called a \emph{loom} if each of its two members is the set of minimum covers of the other. Looms appear naturally in the context of a conjecture of Gy\'arf\'as and Lehel on the covering number of cross-intersecting hypergraphs. We study their properties and ways of construction, and prove special cases of a conjecture that if true would imply the Gy\'arf\'as--Lehel conjecture. 
\end{abstract}

\maketitle

\section{A conjecture on cross-intersecting hypergraphs, and the definition of ``looms''}

A \emph{hypergraph} $H$ is a collection of subsets of a set $V=V(H)$. The elements of $V$ are {\em vertices} and the sets in $H$ are \emph{edges}. We assume  that $V(H)=\cup_{e\in H}e$.
If all edges of $H$ are of size $r$ we say that $H$ is $r$-\emph{uniform}. If $H$ is $r$-uniform and $V(H)$ is the disjoint union of $r$ sets $V_1, \ldots ,V_r$ such that $|e\cap V_i|=1$ for every $1\le i\le r$ and every edge $e$ of $H$, then $H$ is called $r$-\emph{partite}. The sets $V_i$ are then called \emph{sides} of $H$.

A \emph{matching} is a set of disjoint edges. The collection of all matchings in a hypergraph~$H$ is denoted by $\mathcal{M}(H)$.  A matching $M\in \mathcal{M}(H)$ is \emph{perfect}  $\bigcup M=\cup_{e\in M}e=V(H)$.
The matching number $\nu(H)$ of a hypergraph $H$ is $\max_{M\in \mathcal{M}(H)}|M|$. A \emph{cover} of $H$ is a set of vertices meeting all edges of~$H$. The \emph{covering number} $\tau(H)$ of $H$ is the minimal size of a cover of~$H$.

The union of all edges in a maximal matching is a cover, and hence in an $r$-uniform hypergraph~$H$, $\tau(H) \le r\nu(H)$. 
A famous conjecture of Ryser is a sharpening  when $H$ is $r$-partite:
\begin{conjecture}
    If 
$H$ is $r$-partite then $\tau(H) \le (r-1)\nu(H)$.
\end{conjecture}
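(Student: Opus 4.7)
The statement is Ryser's conjecture, a well-known open problem: proved by K\"onig for $r=2$, by Aharoni for $r=3$, and open for every $r \ge 4$. I describe the natural inductive approach and identify the obstruction.

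The plan is to induct on $r$ (or on $\nu$). The base case $r=2$ is K\"onig's theorem: in a bipartite graph $\tau(H) = \nu(H)$. For the inductive step, fix a maximum matching $M$ with $|M| = \nu(H)$ and try to build a cover of size $(r-1)\nu(H)$ by selecting, for each edge $e \in M$, exactly $r-1$ of its $r$ vertices. Writing $e = \{v_1^e, \ldots, v_r^e\}$ with $v_i^e \in V_i$, a choice of index $i_e \in [r]$ records the single vertex $v_{i_e}^e$ of $e$ that is \emph{omitted} from the cover. For the resulting vertex set to be a cover, the family $\{i_e : e \in M\}$ must be compatible: for each edge $f \notin M$, there should exist $e \in M$ with $f \cap e \ne \varnothing$ and $f \cap e \ne \{v_{i_e}^e\}$. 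Since $M$ is maximum and $H$ is $r$-partite, every such $f$ meets some $e \in M$, so the compatibility task is to avoid the single ``bad'' vertex $v_{i_e}^e$ at each $e$.

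This reduces to an independent-system-of-representatives (ISR) problem on the link hypergraphs of $M$. For $r = 3$ the links are bipartite graphs, and Haxell's ISR theorem combined with Meshulam's connectivity bound for independence complexes of line graphs yields a valid choice $\{i_e\}$, giving the bound $\tau \le 2\nu$. For general $r$ the links are $(r-1)$-uniform $(r-1)$-partite hypergraphs, and what is needed is a Meshulam-type estimate showing that the independence complex of such a hypergraph is sufficiently connected in terms of its covering number. The currently available estimates lose a factor and yield only the weaker $\tau \le r\nu$; closing this gap is precisely the fundamental obstacle, and the reason Ryser's conjecture remains open for $r \ge 4$.

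The alternative route, which aligns with the theme of the present paper, is to work structurally rather than topologically. Any extremal hypergraph $H$ with $\tau(H) = (r-1)\nu(H)$ produces orthogonal families of minimum covers and maximum matchings, and looms encode exactly this extremal orthogonality in the related Gy\'arf\'as--Lehel setting. The approach I would pursue is to classify extremal looms in small $r$, then to argue that a minimum counterexample to Ryser's conjecture must embed a loom with parameters that violate structural constraints proved later in the paper, producing a contradiction. The hard step remains a classification of extremal looms sharp enough to forbid these parameter patterns, and this is where I expect the main effort to be concentrated.
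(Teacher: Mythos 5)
The statement you were given is Conjecture~1.1 of the paper --- Ryser's conjecture --- and the paper offers no proof of it: it is stated as an open problem, known only for $r=2$ (Frobenius--K\"onig--Hall), for $r=3$ (Aharoni's theorem, cited as \cite{ryser3}), and in a few special cases with $\nu=1$. You correctly recognized this, and your submission is a survey of the known approaches and of the obstruction rather than a proof. As such there is nothing to compare against: neither you nor the paper proves the statement, and no one currently can.

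A few remarks on the content of your discussion. Your sketch of the inductive/ISR strategy captures the spirit of the $r=3$ proof but not its actual mechanics: Aharoni's argument does not select $r-1$ vertices from each edge of a fixed maximum matching; it truncates the tripartite hypergraph to a bipartite graph on two of the three sides and applies a deficiency version of the Aharoni--Haxell topological Hall theorem, with the connectivity bound for independence complexes of (line graphs of) bipartite graphs doing the work. The diagnosis that the missing ingredient for $r\ge 4$ is a sufficiently strong connectivity estimate for the relevant independence complexes is a fair description of why the topological route stalls, though the best known general bound gives $\tau\le r\nu$ already from the trivial argument, so the topological machinery has to beat that, not merely match it. Finally, your proposed ``loom-based'' route is speculative and not supported by the paper: looms arise there from the negation of the Gy\'arf\'as--Lehel conjecture on cross-intersecting $r$-partite pairs (Conjecture~1.2), which lives in the rainbow-matching world of Haxell-type theorems; the paper explicitly notes that this is ``a different realm from that of Ryser's conjecture,'' and no reduction from Ryser's conjecture to a classification of looms is known or claimed. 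If you pursue that direction you would first need to exhibit such a reduction, which is itself an open structural question.
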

This conjecture, usually  attributed to Ryser, first appeared in a thesis of one of his students, Henderson~\cite{Henderson}.

The case $r=2$ is the Frobenius-K\"onig-Hall theorem. The case $r=3$ was proved in \cite{ryser3}. The conjecture is open for $r\ge 4$. When~$\nu(H)=1$, Gy\'arf\'as proved the case $r=4$, and Tuza~\cite{tuzar=5} proved the case $r=5$.

A pair of hypergraphs~$(A, B)$ is \emph{cross-intersecting} (a notion coined by Bollob\'as) if $e\cap f\neq\emptyset$ for any $e\in A$ and $f\in B$.

A pair $(a,b)$ of sets is said to be \emph{orthogonal} if $|a \cap b|=1$. We write then $a \perp b$. A pair $(A,B)$ of hypergraphs is \emph{orthogonal} if $a\perp b$ whenever $a \in A, b \in B$. We write then $A \perp B$.
Let $H^\perp = 
    \{e  \subseteq V(H) \mid \{e\} \perp H\}$.
An $r$-uniform  hypergraph $H$ is $r$-partite if and only if $\nu(H^\perp ) = r$.

If $(A,B)$ are cross-intersecting, then
 for any pair of edges $e \in A$ and $f \in B$, $e \cup f$ is a cover of $A \cup B$, and hence if $A$ is $a$-uniform and $B$ is $b$-uniform, then $\tau(A \cup B) \le a+b-1$. 
Professedly  motivated by Ryser's conjecture, Gy\'arf\'as and Lehel conjectured that this bound can be improved when $A$ and $B$ are both $r$-partite,  sharing the same $r$-partition. 

\begin{conjecture}\label{conj:GL}\cite{gyarfasold, lehel, gyarfaschinese}
    \; If $A, B$ are non-empty  cross-intersecting $r$-partite hypergraphs, sharing the same $r$-partition, then $\tau(A \cup B) \le 2r-2$. 
\end{conjecture}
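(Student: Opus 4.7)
The plan is to first dispose of the non-orthogonal case trivially and then tackle the orthogonal case, which is precisely where looms enter. Concretely, if some $e \in A$ and $f \in B$ satisfy $|e \cap f| \geq 2$, then $|e \cup f| \leq 2r - 2$; by cross-intersection $e$ meets every edge of $B$ and $f$ meets every edge of $A$, so $e \cup f$ is a cover of $A \cup B$ of the required size. We may therefore assume $A \perp B$.

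In the orthogonal case, fix $e_0 \in A$ and $f_0 \in B$, and let $\{v\} = e_0 \cap f_0$. Set $U := (e_0 \cup f_0) \setminus \{v\}$, which has size $2r - 2$. For any $e \in A$ with $v \notin e$, the unique intersection point $e \cap f_0$ lies in $f_0 \setminus \{v\} \subseteq U$; symmetrically for $B$. Hence $U$ covers every edge of $A \cup B$ that avoids $v$, and the task reduces to choosing $(e_0, f_0)$ so that $U$ also covers every edge through $v$. For $e \in A$ through $v$, orthogonality forces $e \cap f_0 = \{v\}$, so $U$ covers $e$ if and only if $|e \cap e_0| \geq 2$, i.e., if and only if $e_0 \setminus \{v\}$ meets $e \setminus \{v\}$ in the $(r-1)$-partite structure on the other sides; symmetrically for $f_0$ and $B$-edges through $v$.

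The main obstacle is that no single edge $e_0 \in A$ through $v$ need meet the entire $v$-link $\{e \setminus \{v\} : v \in e \in A\}$ (and similarly for $B$). An extremality argument --- picking $(e_0, f_0)$ so that $v$ has minimum degree in $A \cup B$, or an iterative swap --- might force the desired pair, but I do not see this closing the argument directly. The route suggested by the paper's framework is to pass to a minimal orthogonal sub-pair $(A', B')$ in which each of $A', B'$ is the set of minimum covers of the other --- a \emph{loom} --- and exploit the combinatorial rigidity of looms to either produce two edges with intersection of size at least $2$ (contradicting orthogonality) or exhibit a cover of size $2r-2$ directly. This loom rigidity appears essential, which is presumably why the authors formulate an intermediate conjecture about looms rather than attacking Conjecture~\ref{conj:GL} head-on.
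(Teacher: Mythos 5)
There is a genuine gap, but you should first be aware that the statement you were asked to prove is an \emph{open conjecture}: the paper does not prove Conjecture~\ref{conj:GL} and offers no complete argument for it anywhere. What the paper does is (i) carry out exactly the reduction you describe in your first paragraph --- dismissing the case $|e\cap f|\ge 2$, assuming $\tau(A)=\tau(B)=r$, and closing up to $A=C_r(B)$, $B=C_r(A)$, i.e.\ to an $(r,r)$-loom --- and (ii) formulate Conjecture~\ref{conjmain1} (that $\tau^*(A)=\tau^*(B)=r$ for a loom), which via $|V|=r^2$ and the pigeonhole on the $r$ sides would yield a cover of size $r\le 2r-2$. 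That intermediate conjecture is itself open; the paper verifies it only in special cases ($s=2$, $r=s=3$, looms arising from regular graphs, and blow-ups of such), which gives Conjecture~\ref{conj:GL} for $r=3$ but not in general. So your concluding instinct --- that the loom rigidity is where the difficulty lives and that the authors deliberately route through an intermediate conjecture --- is an accurate reading of the paper, not a proof.

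The concrete gap in your own attempt is the second paragraph: the single-pair construction $U=(e_0\cup f_0)\setminus\{v\}$ cannot work without further input. You correctly isolate the obstruction (edges of $A$ through $v$ whose only intersection with $f_0$ is $v$ itself, and which meet $e_0$ only in $v$), but neither a minimum-degree choice of $v$ nor an iterative swap is known to resolve it, and the paper gives no such argument. Note also that the tight example in the paper (the $r\times(2r-2)$ grid) shows that any successful cover of size $2r-2$ may have to look quite different from a union of two edges minus a point, so the ansatz itself is likely too restrictive. If you want to make partial progress along the paper's lines, the productive targets are Conjecture~\ref{conjmain1} for specific classes of looms, or the fractional bound $\tau^*(A\cup B)\le r$ of Theorem~\ref{mainineq}, which is proved and is the strongest unconditional statement available.
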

If true, then the conjecture is tight.
An example showing this (possibly essentially the only one) is obtained by taking the vertex set to be the $r \times (2r-2)$ grid, letting $A$ be the set of first $r-1$ columns, and $B$
the set of $r$-tuples meeting all edges of $A$
and all rows of the grid.




In fact,  Conjecture \ref{conj:GL} belongs to a different realm from that of Ryser's conjecture: it is about rainbow matchings.  
 For a system $\ch=(H_1,  \ldots ,H_m)$ of (not necessarily distinct) hypergraphs, let $\nu_R(\ch)$ denote the maximal size of a rainbow matching, namely a matching consisting of a choice of edges from distinct $H_i$s (not necessarily all).  A pair $\ch=(H_1,H_2)$ of non-empty hypergraphs is cross-intersecting if and only if $\nu_R(\ch)<2$.

Haxell~\cite{haxell} proved: 
\begin{theorem}\label{th:penny}
    If $H_1,\dots, H_m$ are $r$-uniform and
$\tau(\bigcup_{i \in I}H_i)>(2r-1)(|I|-1)$ 
for every  $I\subseteq [m]$ then $\nu_R(\ch)=m$ (namely there exists a full rainbow matching).  
\end{theorem}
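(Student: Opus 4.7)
I would prove the theorem by induction on $m$ via a Haxell-style augmenting-tree argument. For $m=1$ the hypothesis $\tau(H_1)>0$ forces $H_1\neq\emptyset$, giving a rainbow matching of size $1$. For the inductive step, suppose toward contradiction that $(H_1,\dots,H_m)$ satisfies the hypothesis but admits no full rainbow matching. Applying induction to $(H_2,\dots,H_m)$, which inherits the covering condition, there is a rainbow matching $M=\{e_2,\dots,e_m\}$ of size $m-1$; by our maximality assumption, every edge of $H_1$ meets $V(M)$.

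The plan is an iterative swap procedure. Starting with any $f_1\in H_1$, pick an edge $e_{i_1}\in M$ meeting $f_1$ and seek an alternative $e_{i_1}'\in H_{i_1}$ disjoint from $(M\setminus e_{i_1})\cup\{f_1\}$; finding one would produce a full rainbow matching, contradiction. Otherwise every edge of $H_{i_1}$ meets $(M\setminus e_{i_1})\cup\{f_1\}$, and we restart with the new partial matching $M_1=(M\setminus e_{i_1})\cup\{f_1\}$ and a freshly chosen $f_2\in H_{i_1}$. Iterating builds a tree $T$ of attempted augmentations whose nodes record pairs of (rainbow matching, missing hypergraph) and an index set $I\subseteq[m]$ of hypergraphs visited.

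At each step of the process we add at most $2r-1$ genuinely new vertices to an emerging cover: the $r$ vertices of the new trial edge plus at most $r-1$ vertices from the ejected edge that were not previously present. To ensure termination I would choose a counterexample minimizing $\sum_i|H_i|$, which by a standard deletion argument prevents revisiting edges. The terminal data of $T$ then yield a cover of $\bigcup_{i\in I}H_i$ of size at most $(2r-1)(|I|-1)$, contradicting the hypothesis. The principal obstacle is this final bookkeeping: one must verify that the constructed set truly covers every edge of $\bigcup_{i\in I}H_i$, and that the ``$2r-1$ per step'' bound holds without over-counting shared vertices across swap iterations. This careful counting, handled via the minimality of the counterexample, is the technical heart of Haxell's original argument.
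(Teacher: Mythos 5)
The paper offers no proof of this statement: it is quoted verbatim from Haxell's 1995 paper \cite{haxell} and used as a black box, so there is nothing internal to compare your argument against. Judged on its own, your outline correctly identifies the shape of Haxell's argument (induction on $m$, a size-$(m-1)$ rainbow matching from the inductive hypothesis, swaps that each cost at most $2r-1$ vertices because the entering edge meets the ejected edge, and a terminal cover contradicting the hypothesis). But the write-up has a genuine gap at precisely the step you defer as ``the technical heart,'' and that step is essentially the whole theorem.

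Concretely: (i) the linear swap procedure you describe breaks down as soon as a fresh edge $f_2\in H_{i_1}$ meets the previously inserted edge $f_1$ rather than an original edge of $M$ --- ejecting $f_1$ would leave $H_1$ unrepresented again, so the process is forced to branch into a tree, and nothing in your sketch says which branch to follow or which pairs survive into the final cover. (ii) The bound ``cover of size at most $(2r-1)(|I|-1)$'' requires that the number of retained swap-pairs be $|I|-1$, but indices can be revisited, so the number of swaps performed is not a priori controlled by $|I|$; Haxell's proof gets one pair per element of $I\setminus\{i_0\}$ only through a careful extremal choice of the tree. (iii) The assertion that the terminal vertex set covers \emph{every} edge of $\bigcup_{i\in I}H_i$ needs an invariant maintained throughout the process (every edge of each visited $H_i$ meets the retained pairs), which you never state or verify. (iv) Termination ``via minimality of $\sum_i|H_i|$'' is asserted without justification and is not the mechanism in the standard argument, where termination comes from a monotone potential attached to the alternating tree. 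To repair the proof you would need to set up the tree, the invariant, and the pair-counting explicitly --- at which point you would have reproduced Haxell's proof rather than sketched around it.
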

This explains the interest in $\tau(A \cup B)$ in Conjecture \ref{conj:GL}. 

 Conjecture \ref{conj:GL} can be generalized along the same lines: 

\begin{conjecture}
    \label{conj:genGL}
    If $\ch=(H_1, \ldots, H_m)$ is a family of $r$-partite hypergraphs sharing the same $r$-partition and
$\tau(\bigcup_{i \in I}H_i)>(2r-2)(|I|-1)$ 
for every  $I\subseteq [m]$, then $\nu_R(\ch)=m$.  
\end{conjecture}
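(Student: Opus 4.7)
The plan is to proceed by induction on $m$, treating $m=1$ as trivial (the hypothesis for $I=\{1\}$ gives $\tau(H_1)>0$, so $H_1\neq\emptyset$) and noting that $m=2$ is precisely Conjecture \ref{conj:GL}. Thus any proof of \refCon{conj:genGL} automatically subsumes Gy\'arf\'as--Lehel, which must be regarded as a prerequisite rather than a consequence of the inductive argument; this is intrinsic to the problem.

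For the inductive step I would suppose $\ch=(H_1,\dots,H_m)$ satisfies the covering hypothesis but has no full rainbow matching, pick a rainbow matching $M$ of maximum size $k<m$, and aim to convert its maximality into a cover that violates the hypothesis on some subfamily. The model to follow is Haxell's proof of \refT{th:penny}: maximality of $M$ is translated, via an iterative defect/alternation argument, into an index set $I$ together with a cover $C$ of $\bigcup_{i\in I}H_i$ whose size grows linearly in $|I|-1$ with coefficient $2r-1$. The desired sharpening from $2r-1$ to $2r-2$ should come from exploiting the shared $r$-partition: whenever an edge of $M$ blocks an edge of $H_i$, the common $r$-partite structure forces the intersection to lie on a specific side, so at each alternation step only $r-1$ sides contribute freely to the blocking cost rather than all $r$, saving exactly one vertex per step.

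The main obstacle is controlling this one-vertex-per-swap saving so that it compounds cleanly across all $|I|-1$ levels of alternation, since vertices can be re-used across sides in ways that Haxell's $r$-uniform argument does not need to track. The loom machinery developed in this paper offers a plausible structural toolkit here: the extremal configuration for $m=2$, namely the $r\times(2r-2)$ grid together with its orthogonal complement, is a loom, so one expects the tight covers arising from a failed rainbow-augmentation to themselves carry loom-like structure. Pushing the induction through should therefore hinge on (i) Conjecture \ref{conj:GL} itself, and (ii) a rigidity statement to the effect that any cross-intersecting extremal pair discovered inside $\bigcup_{i<m}H_i$ via the alternation forces $H_m$ into a position incompatible with the covering hypothesis for some $I\ni m$. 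Both ingredients appear genuinely difficult, which is consistent with the fact that the conjecture remains open and that the present paper only establishes special cases.
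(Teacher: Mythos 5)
There is no proof to compare against: the statement you are addressing is \refCon{conj:genGL}, which the paper states as an open conjecture and does not prove (it only notes that \refCon{conj:GL} is the case $m=2$, itself open for $r\ge 4$). Your proposal is likewise not a proof but a research plan, and you say as much. The two essential gaps are both fatal as things stand. First, your base case $m=2$ is exactly \refCon{conj:GL}; assuming it as a ``prerequisite'' means the induction never gets off the ground, and nothing in the paper supplies it beyond the fractional analogue (\refT{mainineq}, which bounds $\tau^*$ by $r$, not $\tau$ by $2r-2$) and the special cases $r\le 3$. Second, the inductive step rests on the unproved assertion that the shared $r$-partition saves ``exactly one vertex per alternation step'' in Haxell's argument for \refT{th:penny}. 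That saving is precisely the content of the conjecture, not a lemma you can invoke: Haxell's proof builds covers of size $(2r-1)(|I|-1)$ from a maximal rainbow matching, and no mechanism is exhibited by which $r$-partiteness removes one vertex at each of the $|I|-1$ levels in a way that compounds. Your own caveat about vertices being ``re-used across sides'' is exactly where such an argument would break, and the proposed remedy --- a rigidity statement about loom-like structure of tight covers --- is neither formulated precisely nor established anywhere in the paper (the paper's loom results, e.g.\ \refT{thm:33loom} and \refT{char_r2_looms}, cover only $s=2$ and $r=s=3$, and even there they feed into \refCon{conjmain1} rather than into a rainbow-matching induction).

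If you want to salvage something, the honest framing is: a proof of \refCon{conjmain1} for all $(r,r)$-looms would yield \refCon{conj:GL} (the $m=2$ case) via the reduction in Section 1 and the $|V|=rs$ proposition, but the paper offers no route from the $m=2$ case to general $m$, and neither does your sketch. As written, the proposal contains no step that could be checked for correctness; it identifies the right obstacles but does not overcome any of them.
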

Conjecture \ref{conj:GL} is the case $m=2$.
Later we shall meet indications that the condition of $r$-partiteness is unnecessarily strong
--- it may suffice to assume the existence of one set orthogonal to all  $H_i$ (in an $r$-partite hypergraph every side satisfies this).

For a hypergraph~$H$ let $C(H)$ be the set of covers of $H$, and let $C_{min}(H)$ be the set of minimal covers of $H$. For an integer $k$, let 
\[C_k(H):=\{K \in C(H): |K|=k\}.\]
For our later applications, we may assume that $V(C_k(H))=V(H)$ for every~$k$.
Clearly, if $H$ is $r$-uniform, then for every $s$ 
\begin{equation}\label{c2}
    C_r(C_s(H)) \supseteq H.
\end{equation}
For two hypergraphs $K$ and $J$ on the same vertex set, if $K\subseteq J$, then $C_r(J) \subseteq C_r(K)$, and hence  $C_s(C_r(C_s(H)))\subseteq C_s(H)$. Applying \eqref{c2} with $C_r(H)$ replacing $H$, we get $ C_r(H) \subseteq C_r(C_s(C_r(H)))$, and interchanging $r$ and $s$ (it is possible since the above argument works for any number~$r$ and~$s$), we get 
\begin{equation}\label{c3}
   C_s(C_r(C_s(H)))= C_s(H).
\end{equation}

Towards a proof of Conjecture \ref{conj:GL}, we may assume that $A$ and $B$ are orthogonal, 
because if $|e\cap f|\ge 2$ for some $e\in A$ and $f\in B$, then $e\cup f$  forms a cover of $A\cup B$ of size at most $2r-2$. 
Note that $\tau(A),\tau(B)\le r$ as any edge of $A$ is a cover of $B$ and vice versa. 
We may also assume that $\tau(A)=\tau(B)=r$, otherwise a cover of (say) $A$ of size at most $r-1$, together with an edge of~$A$, forms a cover of $A\cup B$ of size at most $2r-2$.
The negation assumption on the conjecture, namely that $\tau(A \cup B)>2r-2$, is preserved under replacing $A$ and $B$ by hypergraphs containing them, and hence we may assume that $A=C_r(B)$ and $B=C_r(A)$. Indeed, let $A'=C_r(B)$, we have $A\subseteq A'$. Let $B'=C_r(A')$. By~\eqref{c2} we have $B\subseteq C_r(C_r(B))=B'$. Furthermore by~\eqref{c3}, we have $A'=C_r(B)=C_r(C_r(C_r(B)))=C_r(C_r(A'))=C_r(B')$. 

We tag a pair of $r$-uniform hypergraphs satisfying all these conditions an $(r,r)$-{\em loom}. This can be generalized to pairs of hypergraphs of not necessarily equal uniformities. 

\begin{definition}
Let $r,s \geq 1$. An $(r,s)$-{\em loom} is a pair $\L=(A,B)$ of orthogonal hypergraphs satisfying: 
\begin{enumerate}
    \item $A$ is $r$-uniform,
    \item $B$ is $s$-uniform,
    \item $\tau(A) = s, \tau(B)=r$, and
    \item $A=C_r(B), B=C_s(A)$. 
\end{enumerate}
\end{definition}

\begin{examples}\label{ex:looms}\hfill
    \begin{enumerate}
        \item The simplest loom 
        is the $(1,1)$-loom $\U$ in which $A=B=\{\{v\}\}$.
        \item \label{eq:anr1loom} For any $r$ let $\V_r=(A,B)$ be the (unique) $(r,1)$-loom, in which 
 $A=\{\{v_1, \ldots .v_r\}\}$ and $B=\{\{v_1\},\ldots \{v_r\} \}$.         
\item\label{eq:anrsloom} More generally, let $A$ be an $r$-uniform matching with $s$ edges, and $B=C_s(A)$ the set of all transversals of $A$. Then $(A,B)$ is an $(r,s)$-loom.
\item\label{eq:a33loom} On the $r\times r$ grid let $A$ consist of the $r$ rows and  $r$ columns, and let $B$ be the set of $r!$ permutation subgrids.
Then $(A,B)$ is an $(r,r)$-loom, a fact provable by induction on $r$. For future reference, we name it $\L_{r,r}$.
Another representation of this loom is  setting $V=E(K_{r,r})$, $A$  the set of stars of size $r$ in $K_{r,r}$,  and $B$  the set of perfect matchings. 
    \end{enumerate}
\end{examples}

A few observations:

 \begin{lemma}
If  $\L=(A,B)$ is an $(r,s)$-loom then $V(A)=V(B)$. 
\end{lemma}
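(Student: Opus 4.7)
The plan is to show both containments $V(A) \subseteq V(B)$ and $V(B) \subseteq V(A)$ by a short minimality argument, exploiting the fact that the covering numbers $\tau(A)=s$ and $\tau(B)=r$ are exactly the uniformities on the other side.

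For $V(A) \subseteq V(B)$, I would take an arbitrary $v \in V(A)$ and pick $a \in A$ with $v \in a$, which exists because $V(A) = \bigcup A$. Since $A = C_r(B)$, the edge $a$ is a cover of $B$ of size exactly $r$. If $v \notin V(B)$, then no edge of $B$ contains $v$, so $a \setminus \{v\}$ still meets every edge of $B$; but this would be a cover of $B$ of size $r-1$, contradicting $\tau(B) = r$. Hence $v \in V(B)$.

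The reverse containment $V(B) \subseteq V(A)$ follows by the completely symmetric argument: given $v \in V(B)$, pick $b \in B$ with $v \in b$; then $b \in B = C_s(A)$ is a cover of $A$ of size $s$, and if $v$ belonged to no edge of $A$, then $b \setminus \{v\}$ would cover $A$ with only $s-1$ vertices, contradicting $\tau(A) = s$.

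There is no real obstacle; the only thing to verify is that $A$ and $B$ are nonempty so that the edges $a$ and $b$ exist, which follows immediately from $\tau(A) = s \geq 1$ and $\tau(B) = r \geq 1$ in the loom axioms.
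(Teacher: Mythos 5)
Your proof is correct and is essentially identical to the paper's argument: both remove a vertex $v\in V(A)\setminus V(B)$ from an edge $a\in A$ containing it and observe that $a\setminus\{v\}$ still covers $B$, contradicting $\tau(B)=r$ (with the symmetric case handled the same way). No further comment is needed.
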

\begin{proof}
    If, say, $v\in V(A)\setminus V(B)$, then $v\in a$ for some $a \in A$ and $a\setminus\{v\}$ is a cover of $B$, contradicting the assumption $\tau(B)=r$. 
\end{proof}

\begin{lemma}\label{lemma:nu2}
If  $\L=(A,B)$ is an $(r,r)$-loom and $r>1$
then $\nu(A)>1$. Moreover, every edge of $A$ has an $A$-edge disjoint from it. 
\end{lemma}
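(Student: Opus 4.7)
The plan is to argue by contradiction, leveraging the two loom axioms $B = C_r(A)$ and $A \perp B$ together. Fix $a \in A$ and suppose, towards contradiction, that every edge of $A$ meets $a$. Then $a$ itself is a cover of $A$. Since $|a| = r = \tau(A)$, the set $a$ is in fact a \emph{minimum} cover of $A$, so $a \in C_r(A) = B$.

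Now $a$ lies in both $A$ and $B$. Since $(A,B)$ is orthogonal, taking the edge $a \in A$ and the edge $a \in B$ gives $|a \cap a| = 1$, i.e., $|a| = 1$. This contradicts the assumption $r > 1$, so some $a' \in A$ must be disjoint from $a$. The ``moreover'' statement implies $\nu(A) \ge 2$ upon noting that $A$ is nonempty (it has a minimum cover of $B$ as an edge, since $\tau(B) = r \ge 1$).

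The main (and only) step requiring care is the identification $a \in B$: it uses both the size constraint $|a|=r$ built into $r$-uniformity and the tightness $\tau(A) = r$ from the loom definition, combined with $B = C_r(A)$. No induction or case analysis on $r$ is needed, and the whole argument is essentially a one-line application of orthogonality to an edge that would be forced to belong to both hypergraphs. I expect no significant obstacle; the argument is symmetric in $A$ and $B$, so the dual statement for $B$ would follow identically.
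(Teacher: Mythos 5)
Your proof is correct and follows essentially the same route as the paper: an edge of $A$ with no disjoint $A$-edge would be a cover of $A$ of size $r$, hence lie in $C_r(A)=B$, and orthogonality applied to that edge against itself forces $r=1$. The extra remarks about minimality of the cover and nonemptiness of $A$ are harmless elaborations of the same one-line argument.
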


\begin{proof}
    If $e \in A$ does not have an $A$-edge disjoint from it then it belongs to $C_r(A)$, and hence to $B$. This  contradicts the orthogonality condition. 
   \end{proof}

For a hypergraph $H$ and a vertex $v$ of $H$, let 
$star_H(v) =\{e\in H: v\in e\}$.

\begin{lemma}\label{lemma:star=star}
 Let $\L=(A,B)$ be an $(r,r)$-loom. If $star_A(x)\subseteq star_A(y)$, then $star_A(x)=star_A(y)$. 
\end{lemma}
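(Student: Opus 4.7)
The plan is to prove the contrapositive-flavored statement directly: assuming there exists $a' \in A$ with $y \in a'$ but $x \notin a'$, I will construct an edge of $B$ that meets $a'$ in at least two vertices, contradicting orthogonality $A \perp B$. The construction swaps the role of $x$ and $y$ inside a carefully chosen cover, leveraging both parts of the loom axioms ($\tau(B) = r$ and $A = C_r(B)$, and dually $B = C_r(A)$).

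First I would produce an edge of $A$ containing $x$: since $V(A) = \bigcup_{a \in A} a$, pick $a^* \in A$ with $x \in a^*$, and note that the hypothesis $\mathrm{star}_A(x) \subseteq \mathrm{star}_A(y)$ forces $y \in a^*$ as well. Next I would find a "witness" edge $b \in B$ which intersects $a^*$ only at $x$. This is where $\tau(B) = r$ is used: the set $a^* \setminus \{x\}$ has size $r - 1 < \tau(B)$, so it fails to cover $B$, and by orthogonality any $b \in B$ missing $a^* \setminus \{x\}$ must satisfy $b \cap a^* = \{x\}$. In particular $x \in b$ and $y \notin b$.

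Now the key step: modify $b$ to $b^* := (b \setminus \{x\}) \cup \{y\}$, which has size $r$ since $y \notin b$. I claim $b^*$ is a cover of $A$, hence lies in $C_r(A) = B$. Indeed, for any $a \in A$ orthogonality gives a unique $v \in a \cap b$; if $v \neq x$ then $v \in b^*$, while if $v = x$ then $x \in a$, so by hypothesis $y \in a$, and $y \in b^*$. Either way $a \cap b^* \neq \emptyset$. To finish, I compare $b^*$ with $a'$: since $y \in a' \cap b^*$, and since the element $u \in a' \cap b$ (unique by orthogonality) satisfies $u \neq y$ (as $y \notin b$) and $u \neq x$ (as $x \notin a'$), one has $u \in b^* \cap a'$ as well, so $|a' \cap b^*| \geq 2$, contradicting $A \perp B$.

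The main idea I want to highlight is this swap operation that uses both sides of the loom axioms simultaneously, and I do not anticipate a serious obstacle. The only subtle point is ensuring the modified set $b^*$ is genuinely in $B$, which is where we use that minimum covers of $A$ are exactly $B$, together with the hypothesis on stars to handle the "bad" edges where the swap could have destroyed coverage.
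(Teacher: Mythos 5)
Your proposal is correct and follows essentially the same route as the paper: take an edge $a^*\in A$ containing $x$ (hence $y$), find $b\in B$ with $b\cap a^*=\{x\}$, swap $x$ for $y$ to get a new $r$-cover of $A$ lying in $B=C_r(A)$, and derive a contradiction with orthogonality against the edge $a'$ witnessing $y$ but not $x$. The only cosmetic difference is that the paper obtains the witness $b$ by picking any $B$-edge through $x$ and invoking orthogonality with $a^*$, whereas you route through $\tau(B)=r$; both are valid.
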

\begin{proof}
    Negation means that there is an edge $e\in A$ containing both $x$ and $y$ and an edge $e'\in A$ containing $y$ but not $x$. Let $f$ be an edge of $B$ containing $x$. Since $f$ meets $e'$ and is orthogonal to $e$, it contains a vertex in $e'\setminus e$. Then $(f\setminus\{x\})\cup\{y\}$ is also a cover of $A$ as all the edges containing $x$ must contain $y$. Thus $f'=(f\setminus\{x\})\cup\{y\} \in B$. Then  $|f' \cap e'|>1$, contrary to the  assumption that $A\perp B$. 
\end{proof}

\begin{lemma}\label{lemma:loompm}
    Let $\L=(A,B)$ be an  $(r,s)$-loom. A matching $M \in \M(A)$ is perfect if and only if $|M|=s$.
\end{lemma}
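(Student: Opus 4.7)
The plan is to exploit orthogonality twice, using the already-proved fact that $V(A)=V(B)$. The key observation is that for any matching $M\in\M(A)$ and any $b\in B$, orthogonality gives $|b\cap e|=1$ for each $e\in M$, and since the edges of $M$ are pairwise disjoint we get the clean identity
\[
|b\cap\textstyle\bigcup M|=|M|.
\]
Everything else follows from this by comparing this number with $|b|=s$ and with $|V|$.

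For the forward direction, suppose $M$ is perfect, so $\bigcup M=V(A)=V(B)$ (the latter equality being the lemma just above). Then for any $b\in B$ we have $b\subseteq V(B)=\bigcup M$, hence $|b|=|b\cap\bigcup M|=|M|$. Since $B$ is $s$-uniform, this forces $|M|=s$.

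For the converse, suppose $|M|=s$. Then for any $b\in B$, the displayed identity gives $|b\cap\bigcup M|=s=|b|$, i.e. $b\subseteq\bigcup M$. Taking the union over $b\in B$, $V(B)\subseteq\bigcup M$; combined with $V(A)=V(B)$ this yields $V(A)=\bigcup M$, so $M$ is perfect.

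I do not anticipate a real obstacle: the statement reduces to the tautology that an $s$-element set is partitioned into singletons by $s$ pairwise disjoint sets iff it is contained in their union. The only subtlety to flag in the write-up is that we need $V(A)=V(B)$ in both directions — in the forward direction to know every $b\in B$ lies in $\bigcup M$, and in the converse to transfer the conclusion from $V(B)$ to $V(A)$.
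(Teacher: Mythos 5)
Your proof is correct and follows essentially the same route as the paper's: both directions rest on the observation that orthogonality plus disjointness of the matching edges forces $|b\cap\bigcup M|=|M|$ for every $b\in B$, combined with $V(A)=V(B)=\bigcup B$. The paper phrases the converse as a contradiction (an uncovered vertex would lie in some $f\in B$ that is then too small to meet all $s$ edges of $M$), but this is the same counting argument you make explicit.
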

\begin{proof}
   If $M$ is perfect then its edges meet all vertices of any  $f \in B$. The orthogonality condition implies then that $|M|=|f|=s$. 
For the other direction, assume  $|M|=s$. If there exists a vertex $v\in V(\L) \setminus V(M)$ then any  $f\in B$ containing $v$ is not large enough to meet all edges in $M$.  
\end{proof}

\begin{notation}
    For a function $f: S \to \mathbb{R}_{\ge 0}$ let $|f|=\sum_{s\in S}f(s)$.
\end{notation}

Given a hypergraph $H$, a \emph{fractional matching} of $H$ is a function $f:E(H)\rightarrow \mathbb{R}_{\ge 0}$ satisfying $\sum_{e\in H:v\in e}f(e)\le 1$ for every $v\in V(H)$. It is called {\em perfect} if equality holds for all $v\in V$.
 The \emph{fractional matching number} $\nu^*(H)$ is the maximum of $|f|$  over all fractional matchings $f$ of $H$. The characteristic function of a matching is a fractional matching, hence 
$\nu(H)\le\nu^*(H)$.
A \emph{fractional cover} of $H$ is a function $g:V(H)\rightarrow \mathbb{R}_{\ge 0}$ satisfying $\sum_{v:v\in e}g(v)\ge 1$ for every $e\in H$. The \emph{fractional covering number} $\tau^*(H)$ is the minimum of $|g|$ over all fractional covers ~$g$ of $H$. 
The characteristic function of a cover is a fractional covering, hence 
$\tau^*(H)\le\tau(H)$.
By LP duality, $\tau^*(H)=\nu^*(H)$, 
so
\[   \nu(H)\le\nu^*(H) = \tau^*(H)\le \tau(H).   \]

\begin{lemma}\label{lemma:loompfm}
       In an $(r,s)$-loom $\L=(A,B)$, $A$ has a perfect fractional matching if and only if $\nu^*(A)=s$.
\end{lemma}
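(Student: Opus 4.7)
The plan is to handle the two directions separately, using the orthogonality $|a\cap b|=1$ together with LP duality and complementary slackness. The starting observation for both directions is that every $b\in B$, being an $s$-element integer cover of $A$ with $\tau(A)=s$, is an optimal (fractional) cover, so that by LP duality $\nu^*(A)\le \tau^*(A)\le \tau(A)=s$.

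For the ``only if'' direction, suppose $f:A\to\mathbb{R}_{\ge 0}$ is a perfect fractional matching. I would fix any $b\in B$ and compute the double sum $\sum_{v\in b}\sum_{e\in A,\, e\ni v}f(e)$ in two ways. Perfectness makes each inner sum equal to $1$, giving the value $|b|=s$. Swapping the order of summation and invoking orthogonality $|e\cap b|=1$ instead yields $\sum_{e\in A}f(e)\,|e\cap b|=|f|$. Therefore $|f|=s$, which combined with the duality bound above forces $\nu^*(A)=s$.

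For the ``if'' direction, suppose $\nu^*(A)=s$ and let $f$ be a maximum fractional matching, so $|f|=s=\tau^*(A)$. For any fixed $b\in B$, the characteristic vector $\chi_b$ is an optimal fractional cover of $A$. Complementary slackness applied to the pair $(f,\chi_b)$ then gives $\sum_{e\ni v}f(e)=1$ for every vertex $v\in b$. Ranging $b$ over all edges of $B$ and using the earlier lemma that $V(A)=V(B)$, every vertex of $\L$ lies in some $b\in B$, so $f$ achieves the defining equality at every vertex and is perfect.

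I do not expect a real obstacle: the only conceptual ingredient is recognizing that orthogonality is exactly the device that identifies each $b\in B$ simultaneously as a witness for $\tau^*(A)\le s$ and as the set of active dual constraints forcing perfectness via complementary slackness.
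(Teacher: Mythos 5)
Your proof is correct and follows essentially the same route as the paper: the key identity $|f|=\sum_{u\in b}\sum_{a\in A: u\in a}f(a)$ coming from orthogonality, together with the fact that $\bigcup B=V(\L)$, is exactly the paper's argument. Your invocation of complementary slackness in the ``if'' direction is just an LP-flavored restatement of the paper's elementary observation that $s$ inner sums, each at most $1$, totalling $s$ must all equal $1$.
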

\begin{proof}
Given  $b \in B$, for any fractional matching $f$ of $A$, 
\[ |f| = \sum_{u\in b}\sum_{a\in A:u\in a}f(a).\]
The right-hand side is $s$ if and only if  $f$ is saturated at each $u\in b$. The lemma follows from this and the fact that $\bigcup B =V(\L)$.
\end{proof}

Of course, symmetric claims of these lemmas hold for $B$.

Looms are relevant to  \refCon{conj:GL}, but they are  also interesting  on their own merit. Presently we know precious little about them.
A central question we do not know the answer to is whether each component of a loom must have a perfect fractional matching. As we shall see in~\refS{sec:fmoflooms}, a positive answer would imply 
\refCon{conj:GL}.

\section{Bounding the fractional matching number}
Let us indeed start with the fractional case of \refCon{conj:GL}.  It turns out to be true, with a margin: it does not require $r$-partiteness, and the $2$-factor is redundant.

\begin{theorem}\label{mainineq}
   If $A$ and $B$ are cross-intersecting $r$-uniform hypergraphs then $\tau^*(A \cup B) \le r$.
\end{theorem}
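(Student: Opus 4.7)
The plan is to pass to the dual LP: since $\tau^*(A\cup B)=\nu^*(A\cup B)$, it suffices to prove that every fractional matching $f$ of $A\cup B$ satisfies $|f|\le r$. Write $f=f_A+f_B$ where $f_A,f_B$ are the restrictions of $f$ to $A$ and $B$, and set $\alpha=|f_A|$, $\beta=|f_B|$. For a vertex $v$ let $w_A(v)=\sum_{a\in A, v\in a}f_A(a)$ and $w_B(v)=\sum_{b\in B, v\in b}f_B(b)$, so that the fractional matching constraint reads $w_A(v)+w_B(v)\le 1$, and by double-counting $\sum_v w_A(v)=r\alpha$, $\sum_v w_B(v)=r\beta$.

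The key step uses cross-intersection in two different ways. First, for any edge $b^*\in B$, cross-intersection gives
\[\sum_{v\in b^*} w_A(v)=\sum_{a\in A}f_A(a)\,|a\cap b^*|\ge \sum_{a\in A}f_A(a)=\alpha,\]
and since $\sum_{v\in b^*}(w_A(v)+w_B(v))\le r$ we conclude $\sum_{v\in b^*}w_B(v)\le r-\alpha$. Weighting this inequality by $f_B(b^*)$ and summing yields
\[\sum_v w_B(v)^2=\sum_{b^*\in B}f_B(b^*)\sum_{v\in b^*}w_B(v)\le \beta(r-\alpha),\]
and symmetrically $\sum_v w_A(v)^2\le \alpha(r-\beta)$. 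Second, expanding the sum and using $|a\cap b|\ge 1$ for all $a\in A,b\in B$,
\[\sum_v w_A(v)w_B(v)=\sum_{a,b}f_A(a)f_B(b)\,|a\cap b|\ge \alpha\beta.\]

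Combining these via Cauchy--Schwarz gives
\[\alpha\beta\le \sum_v w_A(v)w_B(v)\le\sqrt{\bigl(\sum_v w_A(v)^2\bigr)\bigl(\sum_v w_B(v)^2\bigr)}\le \sqrt{\alpha\beta(r-\alpha)(r-\beta)},\]
so (assuming $\alpha\beta>0$, else the result follows from $\nu^*(A)\le r$ or $\nu^*(B)\le r$) we get $\alpha\beta\le(r-\alpha)(r-\beta)$, which simplifies to $r(\alpha+\beta)\le r^2$, i.e.\ $|f|=\alpha+\beta\le r$. The main technical point is the interplay between the quadratic bound $\sum_v w_B(v)^2\le \beta(r-\alpha)$ and the bilinear bound $\sum_v w_A(v)w_B(v)\ge \alpha\beta$; once both are in hand, Cauchy--Schwarz closes the argument cleanly, with no $r$-partiteness and no spurious factor of $2$.
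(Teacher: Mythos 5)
Your proof is correct, and it takes a genuinely different route from the one the paper relies on. The paper itself does not prove \refT{mainineq} but cites \cite{ak}; the flavour of that argument is visible in the paper's proof of \refT{thm:taustarmax}: form the convex hulls $C_1,C_2$ of the characteristic vectors $\{\chi_e\}_{e\in A}$ and $\{\chi_e\}_{e\in B}$, note that cross-intersection forces $w_1\cdot w_2\ge 1$ for all $w_i\in C_i$, take the shortest vector $u_i$ in each $C_i$, observe that at least one of them, say $u_1$, has $\|u_1\|\ge 1$, and conclude that $u_1$ is a fractional cover of $A\cup B$ of weight $u_1\cdot\bar 1=r$ (it covers $A$ because the shortest vector of a convex set satisfies $u_1\cdot \chi_a\ge \|u_1\|^2$ for every $\chi_a$ in the set, and it covers $B$ by cross-intersection). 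That is a primal construction of an explicit cover, and it yields the extra structural fact quoted in the paper --- the optimal cover can be taken as a convex combination of the edges of one side --- which is exactly what the proof of \refT{thm:taustarmax} exploits. Your argument instead works entirely on the dual side: you bound an arbitrary fractional matching via the two quadratic estimates $\sum_v w_B(v)^2\le\beta(r-\alpha)$ and $\sum_v w_A(v)w_B(v)\ge\alpha\beta$ (both correct, and both genuinely using cross-intersection) and close with Cauchy--Schwarz; this is self-contained LP duality with no appeal to minimization over a convex body, at the cost of not producing the structured cover. Two small points to tidy: the theorem implicitly assumes $A,B\neq\emptyset$ (otherwise it is false --- take $B=\emptyset$ and $A$ a large matching), and your argument does use nonemptiness when you pick $b^*\in B$, which is also what guarantees $r-\alpha\ge 0$ and $r-\beta\ge 0$ before the square roots are taken; and if $A\cap B\neq\emptyset$ you should assign each shared edge to exactly one of $f_A,f_B$ so that $f=f_A+f_B$ holds --- the argument only needs $\mathrm{supp}(f_A)\subseteq A$ and $\mathrm{supp}(f_B)\subseteq B$.
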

 This was proved in \cite{ak}, and generalized in \cite{abm}
to:

\begin{theorem}\label{th:abm}
  Let $\ch=(H_1,H_2, \ldots ,H_m)$ be a system of $r$-uniform hypergraphs.  If 
$\tau^*(\bigcup_{i \in I}H_i)>r(|I|-1)$ 
for every  $I\subseteq [m]$ then $\nu_R(\ch)=m$ (namely there exists a full rainbow matching).  
\end{theorem}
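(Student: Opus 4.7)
The plan is to prove this by induction on $m$, following the standard proof of Hall's theorem (which is the case $r=1$: here each $H_i$ is a set of singletons, $\tau^*$ coincides with $\tau$, and the hypothesis reduces to Hall's defect condition). The base case $m=1$ is trivial, since $\tau^*(H_1)>0$ forces $H_1\ne\emptyset$ and any edge is a rainbow matching. The case $m=2$ is immediate from \refT{mainineq}: the condition $\tau^*(H_1\cup H_2)>r$ rules out cross-intersection, so there exist disjoint $e_1\in H_1$ and $e_2\in H_2$.

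For the inductive step on $m\ge 3$ I would split into two cases. In the \emph{slack case}, every proper $I\subsetneq[m]$ satisfies $\tau^*(\bigcup_{i\in I}H_i)>r|I|$. Pick any edge $e\in H_m$ (nonempty since $\tau^*(H_m)>0$) and set $H'_i=\{f\in H_i:f\cap e=\emptyset\}$ for each $i<m$. Since adding the characteristic function of $e$ to any fractional cover of $\bigcup_{i\in I'}H'_i$ yields a fractional cover of $\bigcup_{i\in I'}H_i$, one has $\tau^*(\bigcup_{i\in I'}H_i)\le\tau^*(\bigcup_{i\in I'}H'_i)+r$, so the hypothesis passes to the restricted system $(H'_1,\dots,H'_{m-1})$. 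Induction then gives a rainbow matching of size $m-1$ disjoint from $e$, which combined with $e$ gives the desired rainbow matching.

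In the \emph{tight case}, some proper $I\subsetneq[m]$ has $\tau^*(\bigcup_{i\in I}H_i)\le r|I|$; take such an $I$ to be minimal by inclusion. Then $(H_i)_{i\in I}$ falls into the slack case and by induction admits a rainbow matching $M_1$ of size $|I|$. The plan is to extend $M_1$ by applying induction a second time to the residual system $(H^{\mathrm{res}}_j)_{j\in[m]\setminus I}$, where $H^{\mathrm{res}}_j=\{e\in H_j:e\cap V(M_1)=\emptyset\}$.

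The hard part will be verifying the hypothesis for the residual system in the tight case. A naive subadditivity estimate yields only $\tau^*(\bigcup_{j\in J}H^{\mathrm{res}}_j)>r(|J|-1)-r|I|$ for $J\subseteq[m]\setminus I$, which falls short by $r|I|$: one ``pays'' once to cover $\bigcup_I H_i$ and again, via $\chi_{V(M_1)}$, to knock out edges of $H_j$ meeting $V(M_1)$, and these payments need not overlap. Closing this gap is the main obstacle. It should be handled either by choosing $M_1$ adaptively, so that $V(M_1)$ itself realizes (to within vanishing error) a minimum fractional cover of $\bigcup_{i\in I}H_i$ and the two payments coincide, or by an augmenting-path exchange in which a violation of the residual hypothesis for some $J$ is transformed into either a larger tight set $I\cup J'$ (contradicting the minimality of $I$) or a direct violation of the original hypothesis of the theorem.
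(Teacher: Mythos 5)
Your proposal is not a proof: the tight case of the induction, which is the entire content of the theorem, is left open, and you say so yourself. First, for calibration: the paper does not prove this statement at all --- it quotes it from \cite{abm} --- and the proofs known in the literature are not Hall-style inductions. For $m=2$ the result follows from \refT{mainineq} via the convex-geometry argument of \cite{ak} (as in the proof of \refT{thm:taustarmax}); for general $m$ the proof in \cite{abm} goes through topology, bounding the homological connectivity of the independence complex of the intersection graph of $\bigcup_{i\in I}H_i$ from below by $\tau^*/r$ and then invoking a topological Hall theorem. So the gap you flag is not a routine technicality to be filled in later; it is the reason the known proof uses entirely different machinery.

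Concretely, neither of your proposed repairs works. For the first: in Hall's theorem ($r=1$) the tight case closes because the vertex set of the partial matching $M_1$ on a tight set $I$ is itself a cover of $\bigcup_{i\in I}H_i$ (it saturates $N(I)$); for $r\ge 2$ this fails completely. A rainbow matching $M_1$ for $(H_i)_{i\in I}$ is a union of $|I|$ disjoint edges, one per colour, and there is no reason its vertex set should meet every edge of $\bigcup_{i\in I}H_i$ --- already for $|I|=1$ and $H_1$ containing two disjoint edges (which is consistent with tightness, $\tau^*(H_1)\le r$), the single chosen edge covers nothing else. Without that covering property you cannot convert a violating fractional cover $g$ of the residual system on $J$ into a fractional cover of $\bigcup_{i\in I\cup J}H_i$ of weight $\le r(|I\cup J|-1)$; you are forced to pay $r|I|$ twice, exactly as you compute. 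The requirement that $M_1$ be simultaneously rainbow and a maximal (or fractionally cover-realizing) matching is generally unsatisfiable. For the second repair, no augmenting-path structure is defined for rainbow matchings in $r$-uniform hypergraphs, and none is known that proves this theorem; asserting that a violation "should be transformed" into a larger tight set is a restatement of the goal, not an argument. If you want a self-contained proof, you should abandon the Hall induction and either reproduce the topological argument of \cite{abm} or, at minimum, restrict to $m=2$ where \refT{mainineq} suffices.
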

A standard deficiency argument yields a generalization:

 \begin{theorem}
If $\nu_R(\cH)<m$ then  there exists a subset $I$ of $[m]$ for which
$\tau^*(\cup_{i\in I}H_i)\le r(|I|-(m-\nu_R(\cH)))$.
\end{theorem}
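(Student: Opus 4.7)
The plan is to reduce to Theorem~\ref{th:abm} via the standard deficiency padding trick. Set $d := m - \nu_R(\cH) \ge 1$; the goal is to exhibit $I \subseteq [m]$ with $\tau^*(\cup_{i \in I} H_i) \le r(|I| - d)$. I would enlarge $\cH$ to a system $\cH'$ of $m' := m + d - 1$ hypergraphs by appending $d - 1$ identical copies of a dummy $r$-uniform hypergraph $K$ placed on a fresh vertex set disjoint from $V(\cH)$, where $K$ is chosen as a matching of $2r(d - 1)$ pairwise disjoint $r$-edges, so $\tau^*(K) = \nu(K) = 2r(d-1)$.

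Any rainbow matching of $\cH'$ uses at most $\nu_R(\cH) = m - d$ edges from $\cH$ and at most $\min(d-1, \nu(K)) = d - 1$ edges from the $K$-copies (since all copies share the same edges), giving $\nu_R(\cH') \le m - 1 < m'$. Thus by the contrapositive of Theorem~\ref{th:abm} there exists $I' \subseteq [m']$ with $\tau^*(\cup_{i \in I'} H'_i) \le r(|I'| - 1)$. Write $I := I' \cap [m]$ and $J := I' \setminus [m]$. The key structural property is that the $K$-copies are indistinguishable: if $J \ne \emptyset$, enlarging $J$ to contain all $d - 1$ dummy indices leaves $\tau^*(\cup_{i \in I'} H'_i)$ unchanged (the union is already $(\cup_{i \in I} H_i) \sqcup K$), while relaxing the right-hand bound by $r$ per added index. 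Assuming thus that $|J| = d - 1$, the inequality reads
\[
 \tau^*\!\left(\bigcup_{i \in I} H_i\right) + 2r(d-1) \;\le\; r(|I| + d - 2),
\]
which rearranges exactly to the target $\tau^*(\cup_{i \in I} H_i) \le r(|I| - d)$.

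The main obstacle is the degenerate case $J = \emptyset$, where the ``WLOG'' step above is unavailable and Theorem~\ref{th:abm} yields only the weaker $\tau^*(\cup_{i \in I} H_i) \le r(|I| - 1)$, insufficient for $d \ge 2$. I would resolve this by induction on $d$ (with base $d = 1$ being Theorem~\ref{th:abm} itself): in the $J = \emptyset$ situation, apply the inductive hypothesis to the subsystems $\cH|_I$ and $\cH|_{[m] \setminus I}$ and combine the resulting witnesses using subadditivity $\tau^*(A \cup B) \le \tau^*(A) + \tau^*(B)$. The delicate accounting here is that rainbow-matching numbers decompose \emph{sub}additively across a partition of $[m]$ (so the sub-deficits satisfy $d_I + d_{[m]\setminus I} \le d$ rather than equality), so a careful choice of $I'$ is required---for instance, by selecting $I'$ to maximize $|J|$ subject to the Theorem~\ref{th:abm} inequality, or equivalently by reading off a witness from the LP-dual complementary-slackness conditions for the rainbow-matching LP---to ensure that the combined witness attains the full deficit $d$.
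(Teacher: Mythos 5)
Your reduction is set up correctly, and the computation in the case $J\neq\emptyset$ is right: using $d-1$ \emph{identical} copies of a long matching $K$ with $\tau^*(K)=2r(d-1)$ does yield $\tau^*(\cup_{i\in I}H_i)\le r(|I|-d)$ whenever the violating set $I'$ supplied by the contrapositive of Theorem~\ref{th:abm} meets the dummy indices. But the case $J=\emptyset$, which you flag as ``the main obstacle,'' is a genuine gap rather than a degenerate technicality, and neither of your proposed repairs closes it. Theorem~\ref{th:abm} asserts only the \emph{existence} of some violating $I'$; nothing prevents every violating set from lying inside $[m]$ (for instance, a small subfamily of $\cH$ of deficiency exactly $1$ is already a violator of the padded system), and such a violator gives only $\tau^*(\cup_{i\in I}H_i)\le r(|I|-1)$. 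Choosing $I'$ to maximize $|J|$ does not help: by your own computation, enlarging $J$ from $\emptyset$ to all dummy indices \emph{tightens} the required inequality by $r(d-1)$, so the maximizer over violators may still have $J=\emptyset$. The induction-on-$d$ fallback needs deficits to add up across the partition $I,[m]\setminus I$, but, as you yourself note, they satisfy $d_I+d_{[m]\setminus I}\le d$, which is the wrong direction; the LP-duality suggestion is not developed enough to assess.

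The obstruction appears structural rather than fixable by a better gadget. Padding by new hypergraphs on fresh vertices leaves every original deficiency-$1$ violator intact, while padding inside the $H_i$'s themselves would require an $r$-uniform gadget $L$ with $\nu(L)\le d-1$ and $\tau^*(L)\ge r(d-1)$, hence $\nu^*(L)=r\nu(L)$; already for one intersecting component this contradicts F\"uredi's bound $\tau^*\le r-1+\frac1r$ cited in the paper. The paper does not spell out its ``standard deficiency argument,'' but the standard route runs the padding one level down, at the topological connectivity parameter underlying the proof of Theorem~\ref{th:abm} (where a disjoint union of $d-1$ intersecting pairs raises the connectivity by exactly $d-1$ while admitting only $d-1$ disjoint edges), and only afterwards invokes the lower bound of that parameter by $\tau^*/r$. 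Using Theorem~\ref{th:abm} as a black box, as you do, proves the statement only for $d=1$ and, conditionally, when some violator happens to meet the dummies.
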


 If $\nu_R(\cH)=1$
 and $H_i\neq \emptyset$ for all $i$, then necessarily the set $I$ in the theorem is  $[m]$. This  yields:

\begin{theorem}\label{thm:taustar}\cite{ak,abm} 
If $\nu_R(\cH)=1$
 and $H_i\neq \emptyset$ for all $i$, 
 then $\tau^*(\bigcup_{1\le i \le m}H_i)\le r$.
\end{theorem}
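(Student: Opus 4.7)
The plan is to deduce this as a direct corollary of the deficiency theorem that immediately precedes it.

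First I would dispose of the trivial case $m=1$: the hypothesis $\nu_R(\cH)=1$ together with $H_1\neq\emptyset$ says exactly that $H_1$ is intersecting, so any single edge of $H_1$ is a cover, giving $\tau^*(H_1)\le\tau(H_1)\le r$.

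For $m\ge 2$, the strategy is to apply the deficiency theorem with $\nu_R(\cH)=1$ to obtain a set $I\subseteq[m]$ with $\tau^*\bigl(\bigcup_{i\in I}H_i\bigr)\le r\bigl(|I|-(m-1)\bigr)$, and then argue that $I$ is forced to be all of $[m]$. The lever is the elementary observation that a non-empty hypergraph has fractional covering number at least $1$: any fractional cover $g$ must satisfy $|g|\ge\sum_{v\in e}g(v)\ge 1$ for any edge $e$. Since every $H_i$ is non-empty, whenever $I\ne\emptyset$ the union $\bigcup_{i\in I}H_i$ contains an edge, so $\tau^*\bigl(\bigcup_{i\in I}H_i\bigr)\ge 1$. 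Combined with the deficiency bound this gives $r(|I|-m+1)\ge 1$, forcing $|I|\ge m-1+1/r$; since $|I|$ is an integer not exceeding $m$, we conclude $|I|=m$. The degenerate case $I=\emptyset$ is ruled out separately, as the bound would then read $0\le -r(m-1)$, which fails for $m\ge 2$.

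Substituting $|I|=m$ back into the deficiency inequality yields exactly $\tau^*\bigl(\bigcup_{i=1}^m H_i\bigr)\le r$. I do not anticipate any real obstacle here: the whole argument is essentially a one-line corollary, with the only subtle point being the lower bound $\tau^*\ge 1$ for a non-empty hypergraph, used to exclude proper subsets $I\subsetneq[m]$.
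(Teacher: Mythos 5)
Your argument is essentially the paper's own: it deduces the statement from the preceding deficiency theorem by showing the set $I$ must be all of $[m]$, using the non-emptiness of each $H_i$ to force $\tau^*\bigl(\bigcup_{i\in I}H_i\bigr)\ge 1$ and hence $|I|=m$, which is exactly the step the paper leaves implicit. (Your aside on $m=1$ is slightly off --- for a single hypergraph $\nu_R(\cH)=1$ only says $H_1\neq\emptyset$, not that $H_1$ is intersecting, since a rainbow matching uses at most one edge per $H_i$ --- but this degenerate case is outside the intended scope and does not affect the main argument.)
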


In \cite{ak} it was shown that the fractional cover witnessing this result can be chosen as a convex combination of $\{\chi_e\}_{e \in H_i}$ for some $i$. ($\chi_e$ is the characteristic function of $e$, namely $\chi_e(v)=1$ if $v\in e$  and $\chi_e(v)=0$ if $v \notin e$.)

\begin{corollary}\label{cor:taustarlers}
    If $A, B$ are cross-intersecting, $A$ is  $r$-uniform 
    and $B$ is  $s$-uniform 
   then $\tau^*(A \cup B) \le \max(r,s)$.
\end{corollary}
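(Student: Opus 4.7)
The plan is to reduce to the equal-uniformity case \refT{mainineq} by padding $B$ up to $r$-uniform. Assume without loss of generality that $r\ge s$. For each $b\in B$ introduce a set $X_b$ of $r-s$ fresh vertices, with the sets $\{X_b\}_{b\in B}$ pairwise disjoint and disjoint from $V(A)\cup V(B)$, and set $B'=\{b\cup X_b:b\in B\}$. Then $B'$ is $r$-uniform, and $(A,B')$ is cross-intersecting because $a\cap(b\cup X_b)=a\cap b\ne\emptyset$ for every $a\in A$ and $b\in B$. \refT{mainineq} then supplies a fractional cover $g$ of $A\cup B'$ with $|g|\le r$.

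The main step is to convert $g$ into a fractional cover of $A\cup B$ without increasing the total weight. Let $h$ denote the restriction of $g$ to $V(A)\cup V(B)$. Every $a\in A$ is still fully covered by $h$, since $a$ is disjoint from $\bigcup_b X_b$. For $b\in B$, writing $g(X_b):=\sum_{v\in X_b}g(v)$, the cover inequality applied to $b\cup X_b\in B'$ gives $\sum_{v\in b}h(v)=\sum_{v\in b\cup X_b}g(v)-g(X_b)\ge 1-g(X_b)$, so $h$ covers $b$ up to a deficit of at most $g(X_b)$.

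To close these deficits, for each $b\in B$ I pick any vertex $v_b\in b$ and add $g(X_b)$ to the value at $v_b$, obtaining a function $h'$. Then $h'$ satisfies every $b$-constraint (the added weight at $v_b\in b$ cancels the deficit), and the $A$-constraints only improve. Since the $X_b$ are pairwise disjoint, the total weight added is $\sum_b g(X_b)=g(\bigcup_b X_b)=|g|-|h|$, so $|h'|=|g|\le r=\max(r,s)$, as required. The main subtlety I anticipate is precisely the pairwise disjointness of the $X_b$: with a shared padding set, $\sum_b g(X_b)$ could far exceed $g(\bigcup_b X_b)$, and the repair step would push the total weight past $r$. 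Giving each $b\in B$ its own private padding is what makes the bookkeeping tight.
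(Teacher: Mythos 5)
Your proof is correct and follows essentially the same route as the paper's: pad each edge of $B$ with its own private set of $r-s$ dummy vertices and apply \refT{mainineq}. The paper stops there, leaving implicit the step you spell out --- converting the resulting fractional cover of $A\cup B'$ back into one of $A\cup B$ of no greater weight (needed since the edges of $B$ are subsets of those of $B'$, so the inequality does not transfer for free) --- and your repair via the deficits $g(X_b)$, which works precisely because the padding sets are pairwise disjoint, correctly fills in that detail.
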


\begin{proof}
    Assume $r \ge s$. Replace every edge  $b \in B$ by $b \cup dummy(b)$, where 
$dummy(b)$ is a set of size $r-s$, disjoint from $V(A) \cup V(B)$ and $dummy(b)
\cap dummy(c)=\emptyset$ whenever $b \neq c$. Apply now Theorem \ref{mainineq}.    
\end{proof}

It can be shown that equality holds in \refT{thm:taustar} if and only if $\nu^*(H_i)=r$ for some $1\le i \le m$. Let us state and prove this just for $m=2$.  

\begin{theorem}\label{thm:taustarmax}
If $H_1, H_2$ are cross-intersecting $r$-uniform hypergraphs and $\tau^*(H_1 \cup H_2)=r$ then    $\max_{i=1,2}\nu^*(H_i)=r$.
\end{theorem}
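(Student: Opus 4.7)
The plan is to establish $\max_{i=1,2}\nu^*(H_i)\geq r$; the reverse inequality is immediate since $\nu^*(H_i)\leq\nu^*(H_1\cup H_2)=\tau^*(H_1\cup H_2)=r$ by LP duality and the monotonicity of $\nu^*$ under inclusion.

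My entry point is the refinement of \cite{ak} quoted just after \refT{thm:taustar}: an optimal fractional cover $g^*$ of $H_1\cup H_2$ can be chosen of the form $g^*=\sum_{e\in H_i}\lambda_e\chi_e$ for some $i\in\{1,2\}$. Since $|g^*|=r$, the weights satisfy $\sum_e\lambda_e=1$, so $g^*(v)=\sum_{e\ni v}\lambda_e\in[0,1]$, and $\lambda$ is itself a fractional matching of $H_i$ of size $1$. Take $i=1$ without loss of generality. The natural move is then to rescale: $r\lambda$ has total weight $r$ and is a fractional matching of $H_1$ exactly when $g^*(v)\leq 1/r$ for every $v$, in which case $\nu^*(H_1)=r$ and we are done.

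The substantive case is when $g^*(v)>1/r$ for some $v$. Here I will pass to $H_2$. Let $\mu^*$ be any optimal fractional matching of $H_1\cup H_2$; complementary slackness yields $\sum_{v\in e}g^*(v)=1$ for every $e\in\operatorname{supp}(\mu^*)$. For $f\in\operatorname{supp}(\mu^*)\cap H_2$ this expands to $\sum_{e\in H_1}\lambda_e|e\cap f|=1$, and combined with $|e\cap f|\geq 1$ from cross-intersection together with $\sum_e\lambda_e=1$, it forces the orthogonality $|e\cap f|=1$ for every $e\in\operatorname{supp}(\lambda)$ and every such $f$. I intend to exploit this rigidity to produce an \emph{alternative} optimal cover $g''$ of $H_1\cup H_2$ lying in the convex hull of $\{\chi_f:f\in H_2\}$ and additionally satisfying $\max_v g''(v)\leq 1/r$; applying the rescaling argument to $H_2$ then yields $\nu^*(H_2)=r$.

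The main obstacle is the construction of $g''$ in the second case: the orthogonality $|e\cap f|=1$ is only available for pairs $(e,f)\in\operatorname{supp}(\lambda)\times(\operatorname{supp}(\mu^*)\cap H_2)$, so I will likely have to first pass to these supports (using complementary slackness to preserve optimality) and exploit the resulting loom-like sub-structure to build the $H_2$-side convex combination with the required max bound. A cleaner alternative route is to argue by contradiction: if both $\nu^*(H_1)<r$ and $\nu^*(H_2)<r$, attempt to combine small fractional covers of $H_1$ and $H_2$ into a fractional cover of $H_1\cup H_2$ of size $<r$, contradicting $\tau^*(H_1\cup H_2)=r$; cross-intersection would be needed to control the size of the combined cover, plausibly through a maximum-rather-than-sum construction leveraging that each edge of $H_i$ is a cover of $H_{3-i}$.
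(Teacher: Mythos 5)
There is a genuine gap: the case you yourself flag as ``the main obstacle'' is the entire content of the theorem, and your plan for it is not an argument. Your first case ($g^*(v)\le 1/r$ for all $v$, where $g^*=\sum_{e\in H_1}\lambda_e\chi_e$) is fine: rescaling $\lambda$ by $r$ gives a fractional matching of $H_1$ of weight $r$. But when $g^*(v)>1/r$ somewhere, you propose to ``produce an alternative optimal cover $g''$ in the convex hull of $\{\chi_f: f\in H_2\}$ with $\max_v g''(v)\le 1/r$'' without saying how; the orthogonality $|e\cap f|=1$ you extract via complementary slackness holds only for $e\in\operatorname{supp}(\lambda)$ and $f\in\operatorname{supp}(\mu^*)\cap H_2$ (a set that could a priori be empty if $\mu^*$ is supported on $H_1$), and nothing in your sketch converts this into a \emph{flat} $H_2$-side cover. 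Your fallback route (combining fractional covers $g_1,g_2$ of $H_1,H_2$ with $|g_i|<r$) also stalls at exactly the point you acknowledge: a convex combination $\alpha g_1+(1-\alpha)g_2$ only covers each edge to value $\min(\alpha,1-\alpha)$ unless one of the two functions already has slack on the \emph{other} hypergraph, and you have no source for that slack.

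The missing idea, which is how the paper closes both of these gaps, is to take $u_i$ to be the \emph{minimum-norm} point of $C_i=\operatorname{conv}\{\chi_e: e\in H_i\}$ rather than an arbitrary optimal cover from the convex hull. Cross-intersection gives $u_1\cdot u_2\ge 1$, so some $\|u_i\|\ge 1$. If $\|u_1\|>1$, the variational inequality for the nearest point gives $u_1\cdot\chi_e\ge\|u_1\|^2>1$ for all $e\in H_1$, producing exactly the slack needed to mix $\alpha u_1$ with a cover of $H_2$ of weight $<r$ and contradict $\tau^*(H_1\cup H_2)=r$ (this is your fallback route, made to work). If $\|u_1\|=\|u_2\|=1$, then $u_1\cdot u_2\ge 1$ forces $u_1=u_2=:u$ by equality in Cauchy--Schwarz, and a second Cauchy--Schwarz computation ($r=u\cdot\chi_{\operatorname{supp}(u)}\le\sqrt{|\operatorname{supp}(u)|}$, combined with $|\operatorname{supp}(u)|\le r^2$ from complementary slackness) shows $u$ is constant $1/r$ on a support of size $r^2$ --- i.e., the ``bad'' subcase of your dichotomy cannot occur for this particular choice of cover, and your rescaling argument applies. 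Without the minimum-norm choice and these two rigidity steps, your proof does not go through.
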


\begin{proof}
Assume, for contradiction, that

\begin{equation}\label{negation}
\max_{i=1,2}\nu^*(H_i)<r.
\end{equation}

Let $V=V(H_1\cup H_2)$. Let $C_i=conv(\{\chi_e\in \mathbb{R}^{|V|} \mid e \in H_i\})$ be the convex hull of the characteristic functions for $i=1,2$. By the cross-intersection assumption $\chi_{e_1}\cdot \chi_{e_2}\ge 1$ for any $e_1\in H_1$ and $e_2\in H_2$. Hence $w_1\cdot w_2\ge 1$ for any $w_1\in C_1$ and $w_2\in C_2$. 

Let $u_i$ be the shortest vector in $C_i$ ~($i=1,2$).
Since $H_i$ is $r$-uniform, we have $u_i \cdot \b1=r$,  where $\b1\in\mathbb{R}^{|V|}$ is the all $1$s vector. Since
$u_1 \cdot u_2 \ge 1$,  at least one of $u_1,~u_2$ has length at least $1$.

Assume first that one of $u_1, u_2$ is strictly longer than $1$, say $||u_1||>1$.
Since $u_1$ is the shortest vector in $C_1$, 
we have $u_1\cdot \chi_e \ge u_1\cdot u_1>1$ for every $e$ in $H_1$. 
Therefore, for some $0<\alpha <1$ we have $\alpha u_1\cdot \chi_e \ge 1$ for every $e\in H_1$. By \eqref{negation}, there exists a fractional cover $g$ of $H_2$ satisfying $|g|<r$. Then   $w=\alpha u_1+ (1-\alpha)g$ is a fractional cover of $H_1\cup H_2$. Since  $|w| = w\cdot \b1 <\alpha r+(1-\alpha)r =r$,  this contradicts the assumption of the theorem.

We may thus assume that $||u_1||=||u_2||=1$.
This implies $u_1=u_2=:u$, since otherwise $u_1 \cdot u_2<1$. The vector $u$ is then  a fractional cover of $H_1\cup H_2$  of weight $u_1\cdot\b1=r$.

\begin{claim}
Let $supp(u):=\{v\in V\mid u(v)>0\}$. Then
$|supp(u)| =r^2$, and $u(v)=\frac{1}{r}$ for every $v \in supp(u)$.
\end{claim}
\begin{proof}[Proof of the claim]
We first show that $|supp(u)| \le r^2$.
Let $f$ be a fractional matching of  $H_1\cup H_2$ with $|f|=r$. By complementary slackness, $\sum_{e:v \in e}f(e)=1$ for every  $v \in supp(u)$. Therefore
\begin{align*}
    |supp(u)|= \sum_{v \in supp(u)}1 &=\sum_{v \in supp(u)} \sum_{v \in e} f(e)\\
    &\le \sum_{v\in V}\sum_{e:v\in e}f(e)=\sum_{e}\sum_{v:v\in e}f(e)=r\sum_{e}f(e)=r^2.
\end{align*}

For the inverse inequality, by the Cauchy-Schwarz inequality we have:
$$r=u \cdot \chi_{supp(u)} \le ||u|| \times ||\chi_{supp(u)}||= \sqrt{|supp(u)|}$$
 so $|supp(u)| \ge r^2$. The claim that $u$ is constant follows from the characterization of equality in the Cauchy-Schwarz inequality.
\end{proof}
To deduce \refT{thm:taustarmax} from the claim, write $u=\sum_{e \in H_1} \alpha_e \chi_e$ with $\sum_{e \in H_1}\alpha_e=1$, and let $f: E(H_1) \to \mathbb{R}_{\ge 0}$ be defined by $f(e)=r\alpha_e$. Since  $\sum_{e\in H_1:v\in e}f(e)=ru(v)=1$ for every $v\in V(H_1)$, ~~$f$ is a fractional matching of $H_1$. Its  weight $f$ is $\sum_{e\in H_1}f(e)=r\sum_{e\in H_1}\alpha_e=r$, which proves the theorem.
\end{proof}

\begin{remark}
The condition  $\tau^*(H_1 \cup H_2)=r$ is necessary: it is not always true that if $A,B$ are cross-intersecting then  $\nu^*(A \cup B)=\max(\nu^*(A), \nu^*(B))$. For example, if $H_1=\{ab,bc\}$ and $H_2=\{ac\}$ then $\nu^*(H_i)=1~~(i=1,2)$ while $\nu^*(H_1\cup H_2)=\frac{3}{2}$. 
\end{remark}

\section{Many mutually cross-intersecting  hypergraphs}

Famously, there can be no more than $r+1$  mutually orthogonal $r$-uniform matchings of size $r$ each. This is usually expressed in terms of mutually orthogonal Latin squares --- there can be no more than $r-1$ of those of order $r$ (see, e.g.,~\cite[Theorem 6.29]{mols}). 
The bound is attained when there exists an $(r+1)$-uniform projective plane, in particular when $r$ is a power of a prime. 
Since a matching of size $r$ has $\tau^*=r$, the following  is a fractional generalization of this theorem.

   \begin{theorem}\label{thm:smalltau*}
  Let  $(H_1, \ldots, H_m)$ be a family of pairwise cross-intersecting $r$-uniform hypergraphs, where $r\ge 2$. If $m>r+1$ then $\tau^*(H_i)< r$ for some $1\le i \le m$.
   \end{theorem}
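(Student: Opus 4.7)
The plan is to argue by contradiction: assume $\tau^*(H_i) = r$ for every $i \in [m]$, and derive $m \le r+1$. The first step is to use the argument inside the proof of Theorem \ref{thm:taustarmax} to produce a single common ``shortest vector'' $u$ in all the convex hulls $C_i := \mathrm{conv}\{\chi_e : e \in H_i\} \subseteq \R^V$. Writing $u_i$ for the shortest vector in $C_i$, the hypothesis $\tau^*(H_i) = r$ forces $\|u_i\| \le 1$ (since $u_i/\|u_i\|^2$ is a fractional cover of weight $r/\|u_i\|^2$); cross-intersection gives $u_i \cdot u_j \ge 1$, which combined with Cauchy--Schwarz $\|u_i\|\|u_j\| \le 1$ forces $\|u_i\| = \|u_j\| = 1$ and $u_i = u_j =: u$. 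A complementary-slackness computation, identical to the Claim within the proof of Theorem \ref{thm:taustarmax}, then yields $|S| = r^2$ and $u(v) = 1/r$ on $S := \mathrm{supp}(u)$.

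Next, for each $i$ write $u = \sum_{e \in H_i} \alpha^{(i)}_e \chi_e$ convexly and set $H_i^\ast := \{e \in H_i : \alpha^{(i)}_e > 0\}$. Each edge of $H_i^\ast$ lies in $S$, and $f_i := r\,\alpha^{(i)}$ is a perfect fractional matching of $H_i^\ast$ on $S$. Crucially, for $i \ne j$,
\[ \sum_{e \in H_i^\ast,\, e' \in H_j^\ast} \alpha^{(i)}_e \alpha^{(j)}_{e'} |e \cap e'| \;=\; u \cdot u \;=\; 1, \]
and since the $\alpha$-weights form a product distribution summing to $1$ while $|e \cap e'| \ge 1$ by cross-intersection, we must have $|e \cap e'| = 1$ for every $e \in H_i^\ast$ and $e' \in H_j^\ast$; so $H_i^\ast$ and $H_j^\ast$ are in fact orthogonal.

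The crux is a rank lower bound: $\dim W_i \ge r$, where $W_i := \mathrm{span}\{\chi_e : e \in H_i^\ast\} \subseteq \R^S$. Pick any basis $\chi_{e_1}, \dots, \chi_{e_k}$ of $W_i$; since $\chi_S = \sum_e f_i(e)\chi_e$ lies in $W_i$, we can write $\chi_S = \sum_{j=1}^k d_j \chi_{e_j}$. If some $v \in S \setminus \bigcup_j e_j$ existed, evaluating at $v$ would give $1 = 0$, so $\bigcup_j e_j = S$, whence $rk \ge |S| = r^2$ and $k \ge r$. Projecting to $V_0 := \chi_S^\perp \subseteq \R^S$ via $\tilde\chi_e := \chi_e - \tfrac{1}{r}\chi_S$ and setting $U_i := \mathrm{span}\{\tilde\chi_e : e \in H_i^\ast\}$, we have $\dim U_i = \dim W_i - 1 \ge r-1$ (since $\chi_S \in W_i$), while a direct computation using $\chi_e \cdot \chi_S = r$, $\|\chi_S\|^2 = r^2$, and $|e\cap e'|=1$ gives $\tilde\chi_e \cdot \tilde\chi_{e'} = 0$ whenever $e \in H_i^\ast, e' \in H_j^\ast, i \ne j$. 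So the $U_i$ are pairwise orthogonal subspaces of $V_0$, giving
\[ m(r-1) \;\le\; \sum_{i=1}^m \dim U_i \;\le\; \dim V_0 \;=\; r^2 - 1 \;=\; (r-1)(r+1), \]
so $m \le r+1$, contradicting $m > r+1$.

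The main obstacle is the first step: one must extract from the proof of Theorem \ref{thm:taustarmax} more than the theorem statement provides --- namely the exact common vector $u$ with support size $r^2$ and uniform value $1/r$ --- since the published theorem only asserts that \emph{some} $\nu^*(H_i)$ equals $r$. Once this structural description is secured, the remainder of the argument is a clean MOLS-style dimension count; the tight case $m=r+1$ is realized exactly when each $H_i^\ast$ is a perfect matching of the $r \times r$ grid (rows, columns, and the $r-1$ MOLS).
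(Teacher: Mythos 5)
Your proof is correct, but it reaches the contradiction by a genuinely different route from the paper's. Both arguments first reduce to the case $\tau^*(H_i)=r$ for all $i$ and both establish the key intermediate fact that edges from distinct $H_i$'s meet in \emph{exactly} one vertex; the paper gets this by complementary slackness applied to the fractional covers $w_i(v)=\sum_{e\ni v}m_i(e)/r$ induced by optimal fractional matchings, whereas you get it (together with much more structure) from the shortest-vector argument of Theorem~\ref{thm:taustarmax}, correctly adapted to $m$ pairwise cross-intersecting hypergraphs: the bound $\|u_i\|\le 1$ from $\tau^*(H_i)\ge r$, the forced equality $u_1=\cdots=u_m=u$, and then $|\mathrm{supp}(u)|=r^2$ with $u\equiv 1/r$ (your worry about ``extracting more than the theorem statement provides'' is unfounded --- the complementary-slackness step goes through verbatim using an optimal fractional matching of a single $H_i$, since $u$ is an optimal fractional cover of each $H_i$ separately). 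From there the two proofs diverge completely: the paper finishes with a two-line pigeonhole (the $m-1\ge r+1$ edges through a vertex $v\notin e_1$ each hit the $r$-set $e_1$ in a point other than $v$, so two of them share two vertices), while you run the classical MOLS-style dimension count, showing $\dim\mathrm{span}\{\chi_e-\tfrac1r\chi_S\}\ge r-1$ for each $H_i^*$ and that these subspaces are pairwise orthogonal in the $(r^2-1)$-dimensional space $\chi_S^\perp$. The paper's ending is shorter and more elementary; yours is longer but yields the extremal structure for free (each $H_i^*$ is a fractional resolution class of an $r^2$-point set, and equality at $m=r+1$ forces an affine-plane-like configuration), which ties the theorem back to the MOLS discussion that motivates it. All the individual steps I checked --- the variational inequality $u\cdot\chi_e\ge\|u\|^2$, the product-distribution argument forcing $|e\cap e'|=1$, the rank bound $rk\ge|S|$ via covering $S$ by a basis, and $\dim U_i=\dim W_i-1$ --- are sound.
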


\begin{proof}
 Since  $e\cap V(H_i)$ is a cover of $H_i$ for every $e \in H_j$, there holds $|e\cap V(H_i)|=r$,  so  $e\subseteq V(H_i)$. This proves that   $V(H_i)=V(H_j)$ for all $i,j \le m$.
 
 Every $H_i$ is covered by any edge from any $H_j, ~ j \neq i$, hence  $\tau^*(H_i)\le \tau(H_i)\le r$. Thus, the negation assumption on the theorem is that 
    $\tau^*(H_i)=r$ for all $i \le m$.  
Let $m_i: H_i\to \mathbb{R}$ be a fractional matchings of $H_i$ satisfying $|m_i|=r$.
Removing edges of zero weight we may assume that $H_i=supp(m_i)$. 
Let $w_i(v)=\sum_{e\in H_i:v\in e} \frac{m_i(e)}{r}$. 

\begin{claim}
    For any $j \le m$, $w_i$ is a minimum fractional cover of $H_j$.
\end{claim}

\begin{proof}[Proof of the claim]
    
For any $f\in H_j$ we have
\begin{equation}\label{eq:coveratavertex}
    \sum_{v\in f} w_i(v) =\sum_{v\in f} \sum_{e\in H_i:v\in e} \frac{m_i(e)}{r} 
=\sum_{e\in H_i}\frac{m_i(e)}{r}|f\cap e| \geq \sum_{e\in H_i} \frac{m_i(e)}{r} = 1.
\end{equation}
On the other hand \[|w_i| = \sum_{v\in V}\sum_{e\in H_i}\frac{m_i(e)}{r}=\sum_{e\in H_i}\sum_{v\in e}\frac{m_i(e)}{r}= \sum_{e\in H_i} \frac{m_i(e)}{r}\cdot r = r,\]

proving the claim. \end{proof}

By complementary slackness, for each $h\in supp (m_j)=H_j$, we have $\sum_{v\in h} w_i(v)=1$. Thus in~\eqref{eq:coveratavertex} we have equality, and hence 
\begin{equation}\label{eq:intersectionis1}
    |h\cap e|=1
\end{equation}
for any $e\in H_i$ and $h\in H_j$.

Let $v\in V(H_1)$ and $e_1\in H_1$ be such that $v\not\in e_1$ (we may assume that there exist such, since otherwise $|H_1|=1$ and $\tau^*(H_1)=1<2$.) For each $2\le j\le r+1$ there exists $e_j\in H_j$ such that $v\in e_j$. But $H_1$ and $H_j$ are cross-intersecting, therefore $e_j\cap e_1\neq\emptyset$. By pigeonhole, there exist $x\in e_1$ and $2\le i< j\le m$ such that $x\in e_i\cap e_1$ and $x\in e_j\cap e_1$. Therefore $\{v,x\}\subseteq e_i\cap e_j$,  contradicting ~\eqref{eq:intersectionis1}.
\end{proof}

\begin{question}
    We know that the result is sharp for $r$ a prime power. Is it sharp  also for other values of $r$? Namely, are there examples with $m=r+1$ and $\nu^*(H_i)=r$ ($1\le i \le m$) for general $r$? 
\end{question}

\begin{definition}
    For integers $r,m$
    let $g(r,m)$   be the maximum, over all $m$-tuples $(H_1, \ldots ,H_m)$ of pairwise cross-intersecting $r$-uniform hypergraphs, of $\min_{1\le i\le m}\tau^*(H_i)$.
\end{definition}
 For $r$ having a projective plane $\P_r$ of uniformity $r$, we have $g(r,m) \ge r-1+\frac{1}{r}$, by taking $H_i=\P_r$ for all $i$.
 
 \begin{conjecture}
     For $m \ge r+2$ we have $g(r,m)\le r-1+\frac{1}{r}$, with equality if and only if there is a projective plane of uniformity $r$. 
 \end{conjecture}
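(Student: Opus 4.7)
My plan is to extend the averaging argument behind \refT{thm:smalltau*}, tracking the slack finely enough that the projective plane appears as the extremal configuration. Suppose for contradiction that $t_i := \tau^*(H_i) > r - 1 + 1/r$ for every $i \in [m]$ with $m \ge r+2$. Since $t_i > r-1$ forces $\tau(H_i) = r$, every edge of $H_j$ is a cover of $H_i$ of size $r$, so $V(H_1) = \cdots = V(H_m) =: V$ and $|V| \ge r t_i > r^2 - r + 1$. Let $m_i$ be an optimal fractional matching of $H_i$ and set $d_i(v) := \sum_{e \in H_i,\, v \in e} m_i(e) \in [0,1]$, so $\sum_v d_i(v) = r t_i$. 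The cross-intersection hypothesis gives $\sum_{v \in e} d_j(v) \ge t_j$ for $e \in H_i$ and $j \ne i$; averaging against $m_i(e)$ yields
\[
\langle d_i, d_j\rangle := \sum_v d_i(v) d_j(v) \;\ge\; t_i t_j \qquad (i \ne j),
\]
while $d_j(v) \le 1$ bounds $\langle d_i, d_j\rangle \le r \min(t_i, t_j)$. The ``excess'' $E_{ij} := \langle d_i,d_j\rangle - t_i t_j$ thus lies in $\bigl[0,\, t_i(r - t_j)\bigr]$, with each $r - t_j < 1 - 1/r$ by hypothesis.

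Next I would aggregate the $\binom{m}{2}$ pairwise inequalities into one global one. The combinatorial pigeonhole driving \refT{thm:smalltau*} --- pick $e_1 \in H_1$ and $v \in V \setminus e_1$, then find $e_j \in H_j$ with $v \in e_j$ for each $j \ne 1$, so that among $m - 1 \ge r + 1$ such edges two must meet some $x \in e_1$ in common, forcing an excess intersection --- has a fractional version in which $e_1 \sim m_1/t_1$, $v$ is chosen in $V \setminus e_1$, and each $e_j \sim m_j/t_j$ is conditioned on $v \in e_j$. The total expected excess intersection produced should lower-bound $\sum_{i \ne j} E_{ij}$ by an amount growing with $m - r - 1$, while the upper bound $E_{ij} \le t_i(r - t_j) < t_i(1 - 1/r)$ becomes infeasible once $m \ge r + 2$ and every $t_i > r - 1 + 1/r$. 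The number $r - 1 + 1/r$ enters precisely as the value at which these two bounds agree; the extremal configuration saturating everything is $d_i \equiv 1$ on $|V| = r^2 - r + 1$, $m_i \equiv 1/r$ on $\text{supp}(m_i)$, and $|e \cap f| = 1$ for every pair of distinct edges in $\bigcup_i \text{supp}(m_i)$ --- i.e., a projective plane of order $r - 1$.

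For the equality statement, if $t_{i_0} = r - 1 + 1/r$ for some $i_0$, tracing equality through the inequalities above forces $|V| = r^2 - r + 1$, $d_{i_0} \equiv 1$ on $V$, $m_{i_0} \equiv 1/r$ on $\text{supp}(m_{i_0})$, and $\text{supp}(m_{i_0})$ to be an $r$-regular $r$-uniform linear hypergraph in which every two distinct edges meet in exactly one vertex --- the axioms of a finite projective plane of order $r-1$. Conversely, when $\P_r$ exists, the family $H_1 = \cdots = H_m = \P_r$ is pairwise cross-intersecting with every $\tau^*(H_i) = r - 1 + 1/r$, attaining the bound.

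The main obstacle is making the fractional pigeonhole tight. A direct Cauchy--Schwarz on the quadratic form in $D(v) := \sum_i d_i(v)$, namely $\|D\|_2^2 = \sum_i\|d_i\|_2^2 + 2\sum_{i<j}\langle d_i,d_j\rangle$, only recovers the bound $t_i \le r$ of \refT{thm:smalltau*}. The sharpened bound $r - 1 + 1/r$ demands a quadratic certificate that is tight exactly at the projective plane configuration, most plausibly obtained via a Fisher or de~Bruijn--Erd\H{o}s style rigidity applied to the weighted incidence matrix whose rows are indexed by $V$ and whose columns are indexed by $\bigcup_i \text{supp}(m_i)$ with weight $m_i(e)$. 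Identifying this certificate --- and showing it fails precisely when no projective plane of the requested uniformity exists --- is where the real work lies.
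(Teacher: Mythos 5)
First, a point of orientation: the statement you are addressing appears in the paper as a \emph{conjecture}, and the paper gives no proof of it. What the paper does establish is (a) the case $r=2$, via the elementary observation that $\tau^*(H_i)<2$ forces $\nu(H_i)=1$ and hence $\tau^*(H_i)\le \frac{3}{2}$, and (b) a much weaker substitute --- $g(r,m)\le r-1+\frac{1}{r}$ only for $m\ge m(r)$ with $m(r)$ huge --- proved by an entirely different route: the Erd\H{o}s--Rado $\Delta$-system lemma reduces to the case where two of the $H_i$ coincide, and then F\"uredi's theorem on intersecting hypergraphs finishes. So your attempt is at an open problem and must be judged as such.

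Your preliminary inequalities are correct and worth recording: $\langle d_i,d_j\rangle\ge t_it_j$ (cross-intersection averaged against $m_i$) and $\langle d_i,d_j\rangle\le r\min(t_i,t_j)$ (from $d_j\le 1$) both hold, and it is a genuinely encouraging sign that the resulting bound $E_{ij}\le t_i(r-t_j)$ is attained with equality by the projective plane, where $\langle d_i,d_j\rangle=|V|=r^2-r+1$ while $t_it_j=(r^2-r+1)^2/r^2$. But the argument has a hole exactly where you write that the fractional pigeonhole ``should lower-bound $\sum_{i\ne j}E_{ij}$ by an amount growing with $m-r-1$'': no such lower bound is established, and it is not a routine fractionalization of the pigeonhole in \refT{thm:smalltau*}. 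That pigeonhole rests on the \emph{exact} identity $|h\cap e|=1$ for all $e\in H_i$, $h\in H_j$, which the paper extracts from complementary slackness under the hypothesis $\tau^*(H_i)=r$; once $t_i<r$ you control $|h\cap e|$ only on average, pairs of edges may meet in two or more vertices a positive fraction of the time, and the contradiction evaporates. As you yourself note, aggregating via Cauchy--Schwarz on $D=\sum_i d_i$ also fails because $|V|$ is bounded only from below. So the inequality $g(r,m)\le r-1+\frac{1}{r}$ is not proved. The equality characterization is in worse shape still: extracting a projective plane from $t_{i_0}=r-1+\frac{1}{r}$ requires a de~Bruijn--Erd\H{o}s/Fisher-type rigidity for the weighted incidence matrix, which you explicitly defer (``where the real work lies''), and the ``only if'' direction needs stability, not just rigidity, since the maximum defining $g(r,m)$ need not be attained by an exactly extremal configuration. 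In short: correct and suggestive first inequalities, consistent with the conjectured extremal example, but both essential steps --- the quantitative lower bound on the total excess and the rigidity at the extremal value --- are missing.
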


 The conjecture is true for $r=2$, since in this case $\tau^*(H_i)<2$ means $\nu(H_i)=1$, implying $\tau^*(H_i)\le \frac{3}{2}$.

 \begin{theorem}
         For every $r$ there exists $m=m(r)$ such $g(r,m) \le r-1+\frac{1}{r}$. 
         \end{theorem}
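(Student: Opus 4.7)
The plan is to combine Lov\'asz's classical fractional bound for intersecting $r$-uniform hypergraphs with the Erd\H{o}s--Rado sunflower lemma. Recall Lov\'asz's theorem: if $H$ is an $r$-uniform hypergraph with $\nu(H)=1$, then $\tau^*(H)\le r-1+\frac{1}{r}$. So I suppose toward contradiction that $\tau^*(H_i)>r-1+\frac{1}{r}$ for every $i\in[m]$; this forces $\nu(H_i)\ge 2$ for each $i$, and I select disjoint edges $a_i,b_i\in H_i$.

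The sets $\{a_i\}_{i=1}^m$ are pairwise distinct: if $a_i=a_j$ for some $i\ne j$, then cross-intersection of $H_i$ and $H_j$ would require $a_i\cap b_j\ne\emptyset$, contradicting $a_j\cap b_j=\emptyset$ in $H_j$. Next, by the Erd\H{o}s--Rado sunflower lemma, any family of more than $r!(s-1)^r$ distinct $r$-sets contains a sunflower with $s$ petals. Taking $s=r+2$, whenever $m>r!(r+1)^r$ the $a_i$'s contain a sunflower on some indices $i_1,\dots,i_{r+2}$: there is a common core $C$ with $a_{i_k}=C\sqcup P_{i_k}$ and pairwise disjoint petals $P_{i_k}$. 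Cross-intersection gives $C=a_{i_1}\cap a_{i_2}\ne\emptyset$, and distinctness of the $a_{i_k}$ forces $|C|\le r-1$, so each $|P_{i_k}|\ge 1$.

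Finally, each $b_{i_k}$ is disjoint from $a_{i_k}\supseteq C$, so $b_{i_k}\cap C=\emptyset$. For every $\ell\ne k$, cross-intersection of $H_{i_k}$ and $H_{i_\ell}$ gives $b_{i_k}\cap a_{i_\ell}\ne\emptyset$, and since $b_{i_k}$ avoids $C$ this intersection must lie in $P_{i_\ell}$. Hence $b_{i_k}$ meets each of the $r+1$ pairwise disjoint petals $\{P_{i_\ell}:\ell\ne k\}$, forcing $|b_{i_k}|\ge r+1$ and contradicting $|b_{i_k}|=r$. Setting $m(r):=r!(r+1)^r+1$ therefore suffices.

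The essential conceptual move is using Lov\'asz's bound to reduce to the case $\nu(H_i)\ge 2$; once disjoint pairs $(a_i,b_i)$ have been selected and the $a_i$'s shown to be distinct, the sunflower lemma applied to the $a_i$'s and a trivial pigeonhole on the $b_{i_k}$'s across the petals close the argument. The main obstacle is recognizing that these two classical inputs are exactly what is needed; the quantitative bound $m(r)=r!(r+1)^r+1$ is far from the conjectured $r+2$ but is readily improved using more recent near-optimal sunflower estimates.
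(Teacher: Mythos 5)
Your proof is correct, and it is a genuinely different argument from the paper's, although both rest on the same two classical ingredients: the Erd\H{o}s--Rado sunflower lemma and the bound $\tau^*(H)\le r-1+\frac{1}{r}$ for intersecting $r$-uniform hypergraphs (which the paper cites as F\"uredi's theorem rather than Lov\'asz's). The paper splits into cases on whether some $H_i$ has more than $C(r)$ edges: if so, a sunflower with $r+1$ petals found \emph{inside that one hypergraph} has a core of size at most $r-1$ that covers every other $H_j$; if not, all $H_i$ are small and live on a common bounded ground set, so for $m$ large two of them coincide, the repeated one is intersecting, and F\"uredi applies. You instead invoke the F\"uredi bound first to force $\nu(H_i)\ge 2$ for every $i$, choose one disjoint pair $(a_i,b_i)$ per hypergraph, and apply the sunflower lemma \emph{across the family} to the system of representatives $\{a_i\}$ (whose distinctness you correctly verify), after which a single $b_{i_k}$ of size $r$ cannot meet $r+1$ pairwise disjoint petals. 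Your route yields a clean explicit value $m(r)=r!(r+1)^r+1$ and sidesteps the paper's pigeonhole on entire hypergraphs, which tacitly requires all the $H_i$ to share a bounded vertex set; the paper's large-hypergraph case, in exchange, gives the stronger conclusion $\tau(H_j)\le r-1$ for every $j$ but one. Both are valid; yours is, if anything, more self-contained and quantitatively explicit.
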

\begin{proof}
     Let $C=C(r)$ be such that in any $r$-uniform hypergraph of size $C$ there exists a $\Delta$-system with at least $r+1$ leaves. (A $\Delta$-system with $\ell$ leaves is a hypergraph~$D$ consisting of $\ell$ edges for which there exists a ``core'' set $K$, such that $e \cap f=K$
     for all pairs $e\neq f \in D$. The existence of $C$ as above was proved in \cite{delta_erdos}). 
    If there exists~$H_i$ of size larger than~$C$, then $H_i$ contains a $\Delta$-system with $r+1$ leaves, and then its core is a cover for every other $H_j$, and hence $\tau(H_j)\le r-1$ for all $j \neq i$.
    
    Thus we may assume that $|E(H_i)|\le C$ for all $1\le i \le m$. For large enough $m$ there exist then $j_1\neq j_2$ for which  $H_{j_1}=H_{j_2}$. Then  $\nu(H_{j_1})=\nu(H_{j_2})=1$, and hence by a theorem of F\"uredi \cite[Theorem]{furedi}, we have $\tau^*(H_{j_1})\le r-1 + \frac{1}{r}$. 
    \end{proof}

\section{The fractional matching number of looms}\label{sec:fmoflooms}
Apart from the present section, the rest of the paper is devoted to the construction of looms. But before embarking on this project, we present a conjecture that implies \refCon{conj:GL}. This way we can accompany each construction with a verification of the conjecture in the constructed loom.

For a loom $\L=(A,B)$ let 
\[\tau(\L):=\tau(A \cup B),\quad \nu(\L):=\nu(A \cup B) \quad\text{and}\quad \tau^*(\L):=\tau^*(A \cup B).\] Of course, $\nu(\L)=\max(\nu(A), \nu(B))$. 

By \refC{cor:taustarlers} if $\L=(A,B)$ is an $(r,s)$-loom then 
$\tau^*(\L)\le \max(r,s)$.
It may well be that equality holds.  

\begin{conjecture}\label{conjmain} 
If $\L=(A,B)$ is an $(r,s)$-loom then $\tau^*(\L)=\max(r,s)$.  
\end{conjecture}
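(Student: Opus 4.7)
The upper bound $\tau^*(\L)\le \max(r,s)$ is provided by \refC{cor:taustarlers}, so the plan concentrates on the lower bound $\tau^*(\L)\ge \max(r,s)$. Assume without loss of generality that $r\ge s$, so the target is $\tau^*(\L)\ge r$.

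My first line of attack is via ``dominance of $B$''. Since every fractional cover of $A\cup B$ restricts to a fractional cover of $B$, we have $\tau^*(\L)\ge \tau^*(B)$, and the trivial bound $\tau^*(B)\le \tau(B)=r$ means it is enough to show $\tau^*(B)=r$, equivalently (by LP duality) to produce a fractional matching of $B$ of weight $r$. For each of the standard constructions in \refE{ex:looms} the uniform weight $f(b)=r/|B|$ is already a perfect fractional matching of $B$, because of vertex-transitive symmetry; in the general case a weighting that corrects for the irregular vertex degrees of $B$ is required. A natural first guess is $f(b)$ proportional to $\prod_{v\in b}\deg_B(v)^{-1/s}$ (or some similar degree-compensated weight), tuned so that the weight at each vertex is at most $1$. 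Showing that such a weighting actually sums to $r$ would exploit the fixed-point identity $B=C_s(A)$ and the orthogonality $|a\cap b|=1$ via a combinatorial double-counting.

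If this direct approach fails, the fallback is to treat first the case $r=s$, where \refT{thm:taustarmax} identifies the conjecture $\tau^*(\L)=r$ with the assertion $\max(\nu^*(A),\nu^*(B))=r$; by \refL{lemma:loompfm} this is the existence of a perfect fractional matching in one of the two components. Here I would mimic the convex-geometric argument in the proof of \refT{thm:taustarmax}: let $u$ be the shortest vector in $\mathrm{conv}\{\chi_a:a\in A\}$, use $\chi_a\cdot\chi_b=1$ (orthogonality) to deduce $u\cdot w=1$ for every $w\in\mathrm{conv}\{\chi_b:b\in B\}$, and then apply Cauchy--Schwarz together with complementary slackness to conclude that $u$ is the constant $1/r$ on a support of size $r^2$. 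Rescaling the convex-combination coefficients expressing $u$ in terms of the $\chi_a$'s by a factor of $r$ would then produce a perfect fractional matching of $A$.

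The main obstacle, which is precisely what makes the conjecture open, is the final rigidity step of the convex-geometric argument: without extra structural input the shortest vector $u$ need not be constant on its support, and Cauchy--Schwarz need not be tight. A successful argument probably needs a more forceful use of the fixed-point identities $A=C_r(B)$ and $B=C_s(A)$ --- perhaps a Birkhoff-style structural theorem saying that every $(r,r)$-loom is in some sense a ``doubly-stochastic'' object, forcing $|V(\L)|=r^2$ and vertex-regularity of both sides. A secondary obstacle is the reduction from $r>s$ to $r=s$: the dummy-vertex padding used in \refC{cor:taustarlers} destroys the loom property (it lowers $\tau(A)$ below $r$), so a genuinely independent argument for $r>s$ --- plausibly through the dominance of $B$ above --- will also be required.
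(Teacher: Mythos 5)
This statement is \refCon{conjmain}, which the paper leaves \emph{open}: there is no proof in the paper to compare against, only the upper bound $\tau^*(\L)\le\max(r,s)$ (via \refC{cor:taustarlers}, exactly as you say) and verifications in special cases --- $s\le 2$ through the structure theorem \refT{thm:chars2}, $r=s=3$ through \refT{thm:33loom}, looms of the form $(PM(G),C_s(PM(G)))$ through \refT{thm:LGpfm}, and closure under blow-ups through \refT{thm:blowuppfm}. Your proposal is honestly framed as a plan rather than a proof, and it does not close the gap. Concretely: your ``dominance of $B$'' reduction is sound as far as it goes ($\tau^*(\L)\ge\tau^*(B)$ and $\tau^*(B)\le\tau(B)=r$, so it suffices to exhibit a fractional matching of $B$ of weight $r$), but this is precisely the hard half of \refCon{conjmain1}, and the degree-compensated weight $f(b)\propto\prod_{v\in b}\deg_B(v)^{-1/s}$ comes with no argument that the vertex constraints hold or that $|f|=r$. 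Note that even the regularity-type facts you would need (e.g.\ $|V(\L)|=rs$) are only derived in the paper \emph{as consequences} of \refCon{conjmain1}, so they cannot be assumed as input.

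The fallback via \refT{thm:taustarmax} is circular. That theorem has $\tau^*(H_1\cup H_2)=r$ as a \emph{hypothesis}; its convex-geometric proof uses that hypothesis twice, once to rule out $\|u_1\|>1$ (there the short cover $\alpha u_1+(1-\alpha)g$ contradicts $\tau^*=r$ --- without the hypothesis it merely re-asserts $\tau^*<r$, i.e.\ the negation of what you want) and once in the complementary-slackness step that forces $u$ to be constant $1/r$ on a support of size $r^2$ (which requires a fractional matching of $A\cup B$ of weight $r$, again the statement to be proved). Run unconditionally, the argument only yields the disjunction ``$\tau^*(\L)<r$ or $\nu^*(A)=r$ or $\nu^*(B)=r$'', and nothing in the proposal excludes the first alternative. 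You correctly identify both this rigidity obstruction and the failure of the dummy-vertex padding to preserve the loom axioms when $r>s$; these are exactly the points at which a new idea exploiting $A=C_r(B)$, $B=C_s(A)$ would be needed, and none is supplied.
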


By \refT{thm:taustarmax}, if $r=s$ then this would imply that 
 $\max(\tau^*(A), \tau^*(B))=r$.  More generally:

\begin{conjecture}\label{conjmain1} 
If $\L=(A,B)$ is an $(r,s)$-loom then $\tau^*(A)=s, \tau^*(B)=r$.  
\end{conjecture}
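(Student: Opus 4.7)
The plan is to reduce Conjecture~\ref{conjmain1} to the existence of a perfect fractional matching on each side and attack this via LP duality. Since the loom definition is symmetric in $(A,B)$ and $(r,s)$, it suffices to establish $\tau^*(A) = s$; combined with $\tau^*(A) \le \tau(A) = s$, this amounts to $\nu^*(A) = s$, which by Lemma~\ref{lemma:loompfm} is equivalent to $A$ admitting a perfect fractional matching.

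The first step is to exhibit a canonical candidate for the dual optimum: the averaged cover $g^*(v) := d_B(v)/|B| = \tfrac{1}{|B|}\sum_{b \in B} \chi_b(v)$. Since $A \perp B$, for each $a \in A$ we have $\sum_{v \in a} g^*(v) = \tfrac{1}{|B|}\sum_{b \in B} |a \cap b| = 1$, so $g^*$ is a fractional cover of $A$ of weight exactly $s$ that is tight on every edge. By LP complementary slackness, $g^*$ is an optimal dual solution (i.e.\ $\tau^*(A) = s$) if and only if some fractional matching $f^*$ saturates every vertex in $\operatorname{supp}(g^*) = V(\L)$, which is precisely the perfect fractional matching we seek. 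This reduces the conjecture to showing that the min vertex-cover LP of the hypergraph $A$ attains its optimum integrally, at some $b \in B = C_s(A)$.

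To prove this integrality I would try two routes. First, a structural/combinatorial route: exploit the ``unique representative'' property (for every $a \in A$ and $v \in a$ there is $b \in B$ with $a \cap b = \{v\}$, because $a \setminus \{v\}$ fails to cover $B$), together with its dual statement for $B$, to show that the incidence matrix of $A$ is balanced, or that $A$ is a normal hypergraph; either property would force LP integrality. Second, a topological route in the spirit of Aharoni-style Sperner/KKM arguments: fix $b_0 \in B$, triangulate the $(s-1)$-simplex on the vertex set $b_0$, label each simplex vertex via the map sending $a \in A$ to the unique element of $a \cap b_0$, and extract a fully-labelled face whose barycentric weights yield the desired fractional matching.

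I expect the main obstacle to be precisely this integrality step. Looms such as $\L_{r,r}$ or projective-plane analogues are structurally rich but not obviously balanced, and encoding both loom identities $A = C_r(B)$ and $B = C_s(A)$ into a consistent Sperner labeling looks genuinely delicate. This is likely why the statement remains a conjecture rather than a theorem.
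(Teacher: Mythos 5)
This statement is an open conjecture in the paper (Conjecture~\ref{conjmain1}); the paper gives no proof, only verifications in special cases ($(r,2)$-looms via Theorem~\ref{thm:chars2}, $(3,3)$-looms via Theorem~\ref{thm:33loom}, looms of the form $\bigl(PM(G),C_s(PM(G))\bigr)$ via Theorem~\ref{thm:LGpfm}, and preservation under blow-ups via Theorem~\ref{thm:blowuppfm}). Your opening reduction is correct and matches what the paper itself already records: since $\tau^*(A)\le\tau(A)=s$, LP duality and Lemma~\ref{lemma:loompfm} reduce $\tau^*(A)=s$ to the existence of a perfect fractional matching of $A$, and your averaged cover $g^*=\tfrac{1}{|B|}\sum_{b\in B}\chi_b$ is indeed a weight-$s$ fractional cover of $A$ that is tight on every edge, by the same orthogonality computation that appears in the proof of Theorem~\ref{thm:smalltau*}. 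But this is exactly where the paper also stops (``a central question we do not know the answer to is whether each component of a loom must have a perfect fractional matching''); everything after that point in your write-up is a plan rather than a proof.

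The concrete gap is that neither of your two proposed routes closes it, and the first is demonstrably false. Balancedness and normality each imply the K\"onig property $\tau=\nu$, yet the paper exhibits the $(5,3)$-loom $(A,B)=\bigl(PM(P),C_3(PM(P))\bigr)$ for the Petersen graph $P$, in which $\tau(A)=3$ while $\nu(A)=2$ (the Petersen graph has chromatic index $4$, so no three pairwise disjoint perfect matchings exist). Hence a loom component need not be balanced or normal, and any mechanism forcing full LP integrality of $A$ is refuted by this example; what is actually needed is only $\tau^*(A)=\tau(A)$ for the single hypergraph $A$, and asserting ``integrality of the cover LP'' at that level is just a restatement of the conjecture. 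The topological route is too underspecified to assess: the labelling rule you describe assigns labels to edges $a\in A$ rather than to the vertices of a triangulation of the simplex on $b_0$, and it is not explained how a fully labelled face would yield a fractional matching saturating all of $V(\L)$ rather than only the $s$ vertices of $b_0$. As you note yourself, the statement remains open; your attempt reproduces the paper's known reduction but does not resolve the conjecture.
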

\begin{proposition}
 If  \refCon{conjmain1} is true for an $(r,s)$-loom $(A,B)$, then $|V(A)|=|V(B)|=rs$. 
\end{proposition}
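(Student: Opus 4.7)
The plan is to exploit the interaction between the assumption $\tau^*(A)=s$, LP duality, and Lemma~\ref{lemma:loompfm} to produce a perfect fractional matching of $A$, and then do a one-line double-counting argument.

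First, recall that by the earlier lemma $V(A)=V(B)$, so it suffices to show $|V(A)|=rs$. Assuming \refCon{conjmain1} holds for $(A,B)$, we have $\tau^*(A)=s$, and by LP duality this equals $\nu^*(A)$. Now \refL{lemma:loompfm} tells us that $A$ has a perfect fractional matching if and only if $\nu^*(A)=s$, so we obtain a fractional matching $f\colon A\to\mathbb R_{\ge 0}$ with $\sum_{e\ni v}f(e)=1$ for every $v\in V(A)$.

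The computation is then a standard double count, swapping the order of summation and using $r$-uniformity of $A$:
\[
|V(A)|=\sum_{v\in V(A)}1=\sum_{v\in V(A)}\sum_{e\in A:\,v\in e}f(e)=\sum_{e\in A}|e|\,f(e)=r\sum_{e\in A}f(e)=r\cdot\nu^*(A)=rs.
\]
Applying the same reasoning to $B$ (using the symmetric form of \refL{lemma:loompfm} noted after its statement, together with $\tau^*(B)=r$) gives $|V(B)|=sr$ as well, and since $V(A)=V(B)$ the equality $|V(A)|=|V(B)|=rs$ follows.

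There is no real obstacle here: once \refCon{conjmain1} delivers $\nu^*(A)=s$, \refL{lemma:loompfm} upgrades this to a genuinely perfect fractional matching, and then the double count is forced. The only thing to check is that we are entitled to invoke \refL{lemma:loompfm} in both directions (for $A$ and for $B$), which is guaranteed by the symmetry remark immediately following it.
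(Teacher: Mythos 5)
Your proof is correct and follows essentially the same route as the paper: obtain a fractional matching of $A$ of size $s$ (via the conjecture, LP duality, and \refL{lemma:loompfm} to see it is perfect), then double-count incidences using $r$-uniformity to get $|V(A)|=rs$. The paper performs the computation only for $A$ and relies on the earlier lemma that $V(A)=V(B)$ for looms, exactly as you note at the outset, so your additional symmetric computation for $B$ is redundant but harmless.
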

\begin{proof}
Let $w: A\rightarrow \R_{\ge 0}$ be a  fractional matching of size $s$. By~\refL{lemma:loompfm},  $\sum_{e\in A:v\in e}w(e)=1$ for every $v\in V$. Thus we have 
 \[|V(A)| = \sum_{v\in V(A)} 1 = \sum_{v\in V(A)} \sum_{e\in A:v\in e} w(e) = \sum_{e\in A}\sum_{v\in V: v\in e}w(e)= \sum_{e \in A} r w(e) = rs.\]

\end{proof}

In particular, \refCon{conjmain1}
implies \refCon{conj:GL}, since if the hypergraphs are $r$-partite and $|V(\L)|=r^2$, then there exists a side  of size at most $r$, and since every side is a cover this implies $\tau(\L) \le r$. Note that this result is not stronger than the original conjecture (bounding the covering number by $2r-2$), because we used the negation assumption on the latter.

\subsection{Pinnability}
 
\begin{definition}
   A hypergraph $H$ is said to be {\em pinnable} if $H^\perp \neq \emptyset$.   
\end{definition}

We suspect that in ~\refCon{conj:GL} the  milder assumption of pinnability suffices: 
\begin{conjecture}\label{conj:GLpin}
    If $A, B$ are cross-intersecting $r$-uniform hypergraphs and $A \cup B$ is pinnable, then $\tau(A \cup B) \le 2r-2$. 
\end{conjecture}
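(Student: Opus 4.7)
The proof begins with the standard reductions. We may assume $A \perp B$, since otherwise some $a \cup b$ with $|a \cap b| \ge 2$ is already a cover of $A \cup B$ of size at most $2r-2$. We may also assume $\tau(A)=\tau(B)=r$, since otherwise a minimum cover $C$ of (say) $A$ of size $\le r-1$, together with any edge $a \in A$ (which $C$ must meet), gives a cover of $A \cup B$ of size at most $2r-2$. Fix a minimal pin $P$ of $A \cup B$: so $|P \cap e|=1$ for every $e \in A \cup B$ and every $p \in P$ lies in some edge. Since $P$ is itself a cover, the bound is immediate when $|P|\le 2r-2$, so the real task is the regime $|P|\ge 2r-1$.

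The base case $r=2$ is clean. Pinnability makes $A \cup B$ bipartite with bipartition $(P,V\setminus P)$, so by K\"onig's theorem $\tau(A\cup B)=\nu(A\cup B)$, and cross-intersection combined with orthogonality gives $\nu(A\cup B) \le 2$: if $e_1,e_2 \in A$ were disjoint, every $b \in B$ would satisfy $b \subseteq e_1 \cup e_2$, leaving no room for a third disjoint edge. For general $r \ge 3$, the pin decomposes $A=\bigsqcup_{p\in P} A_p$ and $B=\bigsqcup_{p\in P}B_p$, where $A_p:=\{a \in A : p \in a\}$, with the orthogonality rigidity $a \cap b=\{p\}$ for every $a \in A_p$, $b \in B_p$. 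The natural strategy is to find $p \in P$ together with edges $a \in A_p$, $b \in B_p$ for which $(a \cup b)\setminus\{p\}$, a set of size exactly $2r-2$, covers $A \cup B$; this happens precisely when $|a \cap a'|\ge 2$ for every $a' \in A_p\setminus\{a\}$ and $|b \cap b'|\ge 2$ for every $b' \in B_p\setminus\{b\}$ (so that no edge of $A_p$ or $B_p$ forms a ``sunflower with core $\{p\}$'' together with $a$ or $b$).

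The main obstacle is guaranteeing such a triple $(p,a,b)$ exists when $|P|\ge 2r-1$. In the small-case analysis I could carry out (notably $r=2$ with attempted $|P|=3$), the combined constraints of pinnability, cross-intersection, and $\tau(A)=\tau(B)=r$ appeared to force $|P|=r$ outright, making the conjecture trivial since $r \le 2r-2$ for $r \ge 2$. This points towards the possibly cleaner route of establishing the sharp bound $|P|=r$ directly---perhaps via an LP duality and complementary slackness argument using the fractional estimate $\tau^*(A\cup B)\le r$ from \refT{mainineq} together with the observation that the pin yields a tight integral cover $\chi_P$ (each primal constraint being saturated). Turning either approach---the existence of the desired triple, or the sharp pin bound---into a rigorous proof is the step I do not presently see how to complete.
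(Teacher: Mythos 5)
This statement is \refCon{conj:GLpin}, which the paper states as an \emph{open conjecture} and does not prove; the closest result it establishes is the special case of \refCon{genabpin} with $s=2$ (one of the two hypergraphs is a graph), proved by a short neighbourhood argument using the pin. So there is no proof in the paper to match yours against, and your submission is, by your own admission, not a proof either: the entire content beyond the standard reductions (orthogonality, $\tau(A)=\tau(B)=r$, the trivial case $|P|\le 2r-2$) and the $r=2$ base case is a plan whose decisive step you explicitly cannot carry out. The $r=2$ case itself is fine (bipartiteness from the pin, K\H{o}nig, and $\nu\le 2$ from cross-intersection, assuming $A,B\neq\emptyset$), and your characterization of when $(a\cup b)\setminus\{p\}$ is a cover is correct under orthogonality.

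However, the proposed strategy for $r\ge 3$ has a concrete, fatal obstruction, not merely a missing step: it fails on the paper's own tightness example. Take $V$ to be the $r\times(2r-2)$ grid, $A$ the first $r-1$ columns, and $B$ the $r$-sets meeting every edge of $A$ and every row. Each row is a pin, so $A\cup B$ is pinnable with $|P|=2r-2$ (this already refutes your hoped-for dichotomy that $|P|=r$ is forced; that happens to hold only at $r=2$, where $r=2r-2$). In this example the columns of $A$ are pairwise disjoint, so the $A$-side of your condition is vacuous, but for any $p$ and any $b\in B_p$ one can find $b'\in B_p$ with $b\cap b'=\{p\}$ (reassign the rows used in the other columns), so no triple $(p,a,b)$ of the required kind exists even though $\tau(A\cup B)=2r-2$ is attained --- by covers of an entirely different shape, namely the rows. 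So any proof of the conjecture must produce covers not of the form $(a\cup b)\setminus\{p\}$, and the LP route you sketch also cannot give $|P|=r$: the bound $\tau^*(A\cup B)\le r$ from \refT{mainineq} controls only the fractional cover, and the integrality gap in this example is exactly the $2r-2$ versus $r$ the conjecture is about.
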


This conjecture can be formulated also for hypergraphs of different uniformities. 
\begin{conjecture}
    
\label{genabpin}
 If $A$ is $r$-uniform, $B$ is $s$-uniform, $A$ and $B$ are cross-intersecting, $V(A)=V(B)$ and $A \cup B$ is pinnable, then $\tau(A \cup B) \le r+s-2$.   
\end{conjecture}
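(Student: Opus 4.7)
The plan is to follow the standard reductions from the excerpt and then try to extract the extra vertex saving from pinnability.

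First, I would perform the routine reductions: if some $a \in A$ and $b \in B$ satisfy $|a \cap b| \ge 2$, then $a \cup b$ is already a cover of $A \cup B$ of size $\le r+s-2$, so I may assume $A \perp B$; and if $\tau(A) \le s-1$ or $\tau(B) \le r-1$, then a minimum cover together with any edge of the opposite hypergraph suffices, so I may assume $\tau(A)=s$ and $\tau(B)=r$. The next step is to pass to an $(r,s)$-loom by setting $A^* = C_r(B)$ and $B^* = C_s(A^*)$, as in the derivation preceding \refCon{conjmain} in the excerpt. \emph{The first obstacle is that pinnability need not automatically transfer to $(A^*, B^*)$:} the original pin $T$ is a cover of $B$, but $|T \cap e| = 1$ for $e \in C_r(B)$ is not automatic when $|T| > r$. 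A preliminary lemma worth proving is: every pinnable cross-intersecting pair is contained in a pinnable loom. This might be achieved either by replacing $T$ with a well-chosen subset, or by showing that the loom closure is forced to respect the $T$-partition of the edges.

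Now suppose $(A,B)$ is a pinned $(r,s)$-loom with pin $T$. Orthogonality gives a clean structural dichotomy at each $v \in T$: writing $A_v := \{e \setminus \{v\} : v \in e \in A\}$ and $B_v := \{f \setminus \{v\} : v \in f \in B\}$ for the ``links'' on $V \setminus T$, we have $\bigl(\bigcup A_v\bigr) \cap \bigl(\bigcup B_v\bigr) = \emptyset$, since otherwise an $e \in star_A(v)$ and an $f \in star_B(v)$ would meet in at least two vertices. Moreover, for distinct $v, v' \in T$, the pair $(A_v, B_{v'})$ is itself orthogonal on $V \setminus T$, giving a recursive subloom-like structure.

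The natural attempt is then to pick $v \in T$ extremally and build a cover of the form $\{v\} \cup a \cup Z$, where $a \in A$ is an edge disjoint from $v$ (which exists by the $(r,s)$-analogue of \refL{lemma:nu2}) and $Z$ covers the $A$-edges disjoint from both $v$ and $a$. A straightforward choice of $Z$ via a single edge of $B$ missing the relevant vertices yields $|Z| \le s-1$ and hence $\tau(A \cup B) \le r+s-1$, which is one too many. \emph{The crux, and main obstacle, is saving this final vertex.} I would try two angles: (i) observe that $a$ inevitably passes through some $v' \in T$, and use the pin structure at $v'$ together with the $A_{v'}$--$B_{v'}$ disjointness to declare one vertex of $a$ redundant via a Hall-type exchange; or (ii) first prove a strict fractional improvement $\tau^*(A \cup B) \le \max(r,s) - \varepsilon$ for pinnable pairs (using \refT{thm:taustarmax} to rule out equality), and round this fractional gain to an integer saving. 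Either route requires a Ryser-type integrality bound tailored to pinned hypergraphs, which I expect to be the central technical difficulty.
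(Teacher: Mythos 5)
You are trying to prove Conjecture~\ref{genabpin}, which is \emph{open}: the paper establishes only the case $s=2$, by a short direct argument in which the pin $p$ forces $B$ to be a bipartite graph with sides $p$ and $V\setminus p$, and a cover of size at most $r$ is extracted from $\{v\}\cup N_B(p\setminus\{v\})$ for a single $v\in p$ (using $V(A)=V(B)$ and cross-intersection to show $N_B(p\setminus\{v\})\subseteq e\setminus\{v\}$ for any $A$-edge $e$ through $v$). Your opening reductions are sound and match the paper's reduction preceding the definition of looms: orthogonality may be assumed, and $\tau(A)=s$, $\tau(B)=r$ may be assumed because a smaller cover of one hypergraph meets, and hence overlaps, any edge of that hypergraph, giving $|K\cup e|\le (s-1)+r-1=r+s-2$. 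The link-disjointness observation $\bigl(\bigcup A_v\bigr)\cap\bigl(\bigcup B_v\bigr)=\emptyset$ is also correct. But from that point on the proposal is a research plan, not a proof, and both obstacles you flag are genuine and unresolved.

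Concretely: (i) the loom closure $A^*=C_r(B)$, $B^*=C_s(A^*)$ really does destroy pinnability --- a set orthogonal to $A\cup B$ need not be orthogonal to the new edges of $C_r(B)$ --- and your proposed ``preliminary lemma'' that every pinnable cross-intersecting pair embeds in a pinnable loom is an unproved claim of comparable depth to the conjecture itself, so the reduction to looms is not available. (ii) Everything you actually construct gives only the trivial bound: indeed, your candidate cover $\{v\}\cup a\cup Z$ with $v\notin a$, $|a|=r$, $|Z|\le s-1$ has size up to $r+s$, so as written it does not even recover $\tau(A\cup B)\le r+s-1$ (which one gets for free from $a\cup b$ with $a\perp b$); the entire content of the conjecture is the last saved vertex, and both of your routes to it --- a ``Hall-type exchange'' and rounding a fractional gain from \refT{thm:taustarmax} --- are named but not carried out. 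The fractional route in particular cannot close the gap by itself: \refC{cor:taustarlers} already gives $\tau^*(A\cup B)\le\max(r,s)$ with no pinnability hypothesis, and passing from fractional to integral covers is exactly the open difficulty behind \refCon{conjmain1}. If you want a statement you can actually prove, target the case $s=2$ along the lines above, where the pin linearizes $B$ and the saving of the final vertex is explicit.
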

\begin{theorem}
 \refCon{genabpin} is true for $s=2$.    
\end{theorem}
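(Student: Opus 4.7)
The plan is to exploit the fact that $s=2$ makes $B$ a graph, so that pinnability upgrades to bipartiteness and K\"onig's theorem becomes available. Let $p$ be a pin for $A \cup B$, set $V = V(A) = V(B)$, $X = p \cap V$ and $Y = V \setminus p$. The condition $|p \cap b| = 1$ for every $b \in B$ says that every edge of $B$ has one endpoint in each of $X, Y$, so $B$ is a bipartite graph with sides $X, Y$; moreover, each $a \in A$ satisfies $|a \cap X| = 1$. A direct cross-intersection argument also gives $\nu(B) \le r$, and the proof naturally splits on whether $\nu(B) \le r - 1$ or $\nu(B) = r$.

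In the easy case $\nu(B) \le r - 1$, I would invoke K\"onig to obtain a cover $K$ of $B$ with $|K| \le r - 1$. Picking any edge $b = \{x,y\} \in B$ with $x \in K$, I would show that $K \cup \{y\}$ covers $A \cup B$ and has size at most $r$: $K$ already covers $B$, and each $a \in A$ must intersect $b$, hence contains $x \in K$ or $y$.

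The delicate case is $\nu(B) = r$, which is where pinnability really earns its keep. Taking a matching $M = \{b_i = \{x_i, y_i\} : i \in [r]\}$ of $B$ with $x_i \in X$, $y_i \in Y$, each $a \in A$ meets every $b_i$, so by size counting $a$ meets each $b_i$ in exactly one vertex; combined with $|a \cap X| = 1$ this pins down $a = \{x_k\} \cup \{y_j : j \ne k\}$ for some $k$. Hence $V(A) \subseteq V(M)$, and $V(A) = V(B) \supseteq V(M)$ forces $V = V(M)$, so $Y = \{y_1, \ldots, y_r\}$ has exactly $r$ elements. This $Y$ then covers $B$ by bipartiteness and covers $A$ since each edge of $A$ meets $Y$ (provided $r \ge 2$).

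The main obstacle I anticipate is the structural analysis in the case $\nu(B) = r$ --- specifically, showing that cross-intersection and pinnability jointly force every $A$-edge into the shape $\{x_k\} \cup (Y \setminus \{y_k\})$ --- after which the remaining bookkeeping is short. Degenerate cases ($r = 1$, or one of $A, B$ empty) are handled trivially using $V(A) = V(B)$.
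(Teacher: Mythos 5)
Your proof is correct, and it takes a genuinely different route from the paper's. The paper argues in one step: fix $v\in p$ and an edge $e\in A$ through $v$; cross-intersection and the pin give $N_B(p\setminus\{v\})\subseteq e\setminus\{v\}$, and the cover offered is $\{v\}\cup N_B(p\setminus\{v\})$, of size at most $r$. You instead use the pin only to make $B$ bipartite and then split on $\nu(B)$: K\"onig's theorem handles $\nu(B)\le r-1$ (a minimum cover $K$ of $B$ plus one endpoint $y$ of an edge $b=\{x,y\}$ with $x\in K$ covers $A$ because every $A$-edge meets $b$), while for $\nu(B)=r$ you determine every $A$-edge exactly as $\{x_k\}\cup\{y_j: j\ne k\}$ relative to a maximum matching of $B$ and read off the cover $Y$. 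Your version is longer but more robust: the paper's set $\{v\}\cup N_B(p\setminus\{v\})$ clearly covers $B$, but its covering of $A$ is only immediate for $A$-edges through $v$; an $A$-edge meeting $p$ at some $u\ne v$ need not meet $N_B(p\setminus\{v\})$ when $|p|=2$. For instance, with $p=\{v,u\}$, $B=\{vw_1,vw_2,uz_1,uz_2\}$ and $A=\{\{v,z_1,z_2\},\{u,w_1,w_2\}\}$ one gets a pinnable cross-intersecting pair with $V(A)=V(B)$ for which $\{v\}\cup N_B(u)=\{v,z_1,z_2\}$ misses the $A$-edge $\{u,w_1,w_2\}$; this instance falls into your K\"onig case and is handled correctly there. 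So your case analysis buys completeness at the cost of invoking K\"onig, whereas the paper's buys brevity. The only loose ends on your side are the $r=1$ and empty-hypergraph degeneracies, which you rightly note are routine.
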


\begin{proof}
Let $p\in (A\cup B)^\perp$ be a pinning set for $A \cup B$.
Then $B$ is a bipartite graph with sides $p, V\setminus p$. 
Let $v \in p$. By the assumption that $V(A)=V(B)$, $v$ is contained in a triple  $e \in A$. By the cross-intersection property $e\setminus\{v\} \supseteq N_B(p \setminus \{v\})$, and hence 
$|N_B(p \setminus \{v\})|\le r-1$, so $v+N_B(p \setminus \{v\})$ is a covering set of $A\cup B$ of size at most~$r$.    
\end{proof}

The condition $V(A)=V(B)$ is essential --- the theorem is not true, for example, if one of the hypergraphs is empty. 

 A  strengthening    of \refCon{conj:genGL} is:

\begin{conjecture}\label{pinconj}
    Let $\ch=(H_1, \ldots ,H_m)$ be a family of  $r$-uniform hypergraphs. If $H_1\cup\dots\cup H_m$ is pinnable and $\tau(\cup_{i\in I}H_i) > (2r-2)(|I|-1)$ for all $ I\subseteq [m]$  then $\ch$ has a full rainbow matching, i.e, $\nu_R(\ch)=m$ . 
\end{conjecture}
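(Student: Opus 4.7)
The plan is to prove \refCon{pinconj} by induction on $m$, with the case $m=2$ (namely \refCon{conj:GLpin}) serving as the crucial base; the case $m=1$ is trivial since $\tau(H_1)>0$ forces $H_1\ne\emptyset$.

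For $m=2$, let $p$ be a pin of $A\cup B$, and for a chosen $v\in p$ split $A=A_v\sqcup A_{\bar v}$ according to whether $v$ lies in the edge, with analogous notation for $B$. The reduced families $A_v':=\{e\setminus\{v\}:e\in A_v\}$ and $B_v'$ are $(r-1)$-uniform and cross-intersecting on $V\setminus\{v\}$, while $A_{\bar v}$ and $B_{\bar v}$ remain $r$-uniform, cross-intersecting, and pinned by $p\setminus\{v\}$. I would iterate this decomposition to build a cover of $A\cup B$ from a single pin vertex $v$ together with inductively-produced covers of the two reduced families; if the recursion spends only one pin vertex plus one auxiliary vertex per step, the total size is $2r-2$. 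The fractional counterpart \refT{thm:taustar} supplies the ceiling, while pinnability supplies the rounding structure by forcing the fractional cover's support to interact with each edge exclusively at its unique pin vertex.

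For the inductive step $m\ge 3$, assume $\nu_R(\cH)<m$ while every proper subfamily satisfies the hypothesis and thus (by induction) admits a full rainbow matching. Fix a rainbow matching $M=(e_1,\ldots,e_{m-1})$ maximizing a suitable potential, with $e_i\in H_{\sigma(i)}$, and run a Haxell-style augmentation argument aimed at the unused index $j\in[m]\setminus\sigma([m-1])$. The failure of every augmenting path produces, by the standard machinery underlying \refT{th:penny}, a subset $I\subseteq[m]$ together with a cover of $\bigcup_{i\in I}H_i$ of size at most $(2r-1)(|I|-1)$. The global pin $p$ should then allow us to merge one blocker per blocking edge into a single pin vertex shared across all of them, dropping the coefficient to $2r-2$ and contradicting the hypothesis.

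The main obstacle is exactly this sharpening from $2r-1$ to $2r-2$: the Haxell topological/combinatorial machinery yields the former robustly, while the latter demands a genuinely structural use of the pin. A promising angle is to show that tight configurations for the Haxell bound are forced into a loom-like shape on $V(M)\cup p$, and then to rule this out using the loom structure results developed earlier in the paper (for instance via \refL{lemma:star=star} applied inside the blocking configuration). Consequently, the inductive step essentially inherits the difficulty of the $m=2$ case, and genuine progress on \refCon{pinconj} depends on first resolving \refCon{conj:GLpin} and then verifying that its proof can be ``relativized'' to the augmentation-failure subfamilies produced inside the Haxell framework.
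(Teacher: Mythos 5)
The statement you have been asked about is \refCon{pinconj}, which the paper presents as an \emph{open conjecture}: there is no proof of it in the paper, so there is nothing to compare your attempt against, and your proposal does not supply a proof either. The decisive gap is your base case. For $m=2$ the claim is precisely \refCon{conj:GLpin}, which strengthens the Gy\'arf\'as--Lehel conjecture (\refCon{conj:GL}), itself open for all $r\ge 4$; the paper establishes only the fractional analogue (\refT{thm:taustar}), the case $s=2$ of the pinnable version, and $r=3$ via the classification of $(3,3)$-looms. Your sketch for $m=2$ does not close this: after choosing a pin vertex $v$, the subfamilies $A_{\bar v}$ and $B_{\bar v}$ are still $r$-uniform, so the recursion never reduces the uniformity, and the accounting ``one pin vertex plus one auxiliary vertex per step, total $2r-2$'' is asserted rather than derived. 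The remark that pinnability forces the fractional cover's support to ``interact with each edge exclusively at its unique pin vertex'' has no precise content --- pinnability says each edge meets the pin in exactly one vertex, not that an optimal fractional cover lives on the pin --- and no rounding mechanism is given.

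The inductive step suffers from the same problem, which you yourself name: the Haxell machinery behind \refT{th:penny} produces a blocking set of size $(2r-1)(|I|-1)$, and the entire content of the conjecture is the improvement of the coefficient from $2r-1$ to $2r-2$. Saying the global pin ``should allow us to merge one blocker per blocking edge into a single pin vertex'' restates the goal rather than proving it, and no argument is given that the tight configurations are loom-like or that \refL{lemma:star=star} applies to them. What you have written is a reasonable research plan that correctly locates where the difficulty sits (it is consistent with the paper's own strategy of routing \refCon{conj:GL} through \refCon{conjmain1}, which is likewise open), but it proves nothing and should not be presented as a proof of \refCon{pinconj}.
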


\begin{theorem}
   For an $(r,s)$-loom $(A,B)$, if $\tau^*(A)=s$ then $B=A^\perp$. 
\end{theorem}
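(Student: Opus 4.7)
The plan is to prove the two inclusions separately. One direction, $B\subseteq A^\perp$, is free from the definition of a loom: orthogonality of $(A,B)$ says exactly that each $b\in B$ meets every edge of $A$ in exactly one vertex, which is precisely the membership condition for $A^\perp$. So the content of the statement lies in the reverse inclusion $A^\perp \subseteq B$. Since $B = C_s(A)$, I need to show that every $e\in A^\perp$ is a cover of $A$ of size exactly $s$. It covers $A$ for free (it meets every edge), so the substantive task is to pin down $|e|=s$.

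The first key input is the hypothesis $\tau^*(A)=s$. By LP duality this gives $\nu^*(A)=s$, and then \refL{lemma:loompfm} supplies a \emph{perfect} fractional matching $w\colon A \to \R_{\ge 0}$: one satisfying $|w|=s$ together with $\sum_{f\in A:\,v\in f} w(f)=1$ for every $v \in V(A)$. This upgrade from the weak value $\tau^*(A)=s$ to genuine perfection is the one place where the loom hypothesis does real work.

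The main step is then a two-way count of the quantity $\sum_{f\in A} w(f)\,|e\cap f|$ for an arbitrary $e \in A^\perp$. Using $|e\cap f|=1$ for every $f\in A$, the sum collapses to $|w|=s$. Regrouping the same sum as $\sum_{v\in e}\sum_{f\in A:\,v\in f}w(f)$ and invoking perfection of $w$ at each $v\in e \subseteq V(A)$, it collapses instead to $|e|$. Hence $|e|=s$, and as noted $e\in C_s(A)=B$.

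There is no serious obstacle here; the entire argument rests on the passage $\tau^*(A)=s \;\Longrightarrow\;$ existence of a perfect fractional matching, after which the double count is routine. The only small technicality to keep in mind is that $e \subseteq V(A)$ is built into the definition of $A^\perp$, so the perfection identity applies at every $v\in e$, and that $V(A)=V(B)$ (proved earlier) lets us pass between the two vertex sets without comment.
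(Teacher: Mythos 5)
Your proof is correct and follows essentially the same route as the paper: the paper also takes $p\in A^\perp$, invokes a maximum fractional matching of size $s$ (which is perfect by \refL{lemma:loompfm}), and uses the orthogonality/saturation double count to conclude $|p|=s$, hence $p\in C_s(A)=B$. Your write-up just makes explicit the easy inclusion $B\subseteq A^\perp$ and the details behind the paper's appeal to complementary slackness.
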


\begin{proof} Let $p \in A^\perp$, we wish to show that $ p \in B$. It suffices to show that $|p| \le s $. 
    Let $f$ be a maximum fractional matching, namely of size $s$. By complementary slackness $|f|=|p|$, hence $|p| \le s$, as desired. 
\end{proof}

\begin{theorem}
If $\L=(A,B)$ is an $(r,r)$-loom and $\tau^*(A)=r$, then $|p|=r$ for every set  $p \in (A \cup B)^\perp$.
\end{theorem}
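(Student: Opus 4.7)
The plan is to derive this as an almost immediate consequence of the preceding theorem. That result asserts, for an $(r,s)$-loom, that $\tau^*(A)=s$ implies $B=A^\perp$. Applied to our $(r,r)$-loom with $\tau^*(A)=r$, it gives $B=A^\perp$. Since $p\in(A\cup B)^\perp$ certainly satisfies $\{p\}\perp A$, i.e.\ $p\in A^\perp$, we conclude $p\in B$, and the $r$-uniformity of $B$ forces $|p|=r$.

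Alternatively, I would prove it directly by reusing the key double-counting trick. Note first that $p$ is a cover of $A$ (since $|p\cap a|=1\neq 0$ for every $a\in A$), giving $|p|\ge\tau(A)=r$. For the reverse inequality, I would combine $\tau^*(A)=r$ with LP duality to get $\nu^*(A)=r$; \refL{lemma:loompfm} then upgrades the optimum to a \emph{perfect} fractional matching $f$ of $A$, so $|f|=r$ and $\sum_{a\ni v}f(a)=1$ for every $v\in V$. One then computes
\[ r \;=\; \sum_{a\in A} f(a) \;=\; \sum_{a\in A} f(a)\,|a\cap p| \;=\; \sum_{v\in p}\sum_{a\ni v}f(a) \;=\; |p|, \]
where the second equality uses the orthogonality $|a\cap p|=1$ and the last uses perfectness of $f$ restricted to the vertices of $p$.

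No genuine obstacle appears: the hypothesis $\tau^*(A)=r$ enters only through \refL{lemma:loompfm}, which is precisely the tool that promotes an optimum fractional matching to a perfect one, and thereby promotes the weak-duality inequality $|p|\ge r$ into the equality $|p|=r$. In particular, neither the additional fact that $p\in B^\perp$ nor any structural information about $B$ beyond its $r$-uniformity is needed.
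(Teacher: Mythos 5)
Your second, direct argument is precisely the paper's proof: take a fractional matching of size $r$, upgrade it to a perfect one via \refL{lemma:loompfm}, and double-count $\sum_{v\in p}\sum_{a\ni v}f(a)$ using $|a\cap p|=1$ to get $|p|=|f|=r$. Your first route (quoting the preceding theorem to get $p\in A^\perp=B$ and then invoking $r$-uniformity of $B$) is also valid and slightly shorter, but since that theorem is itself proved by the same computation, it is not a genuinely different argument.
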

\begin{proof}
    Let $f$ be a fractional matching of size $r$ of $A$, then by~\refL{lemma:loompfm} it is a perfect fractional matching. Let $p\in (A \cup B)^\perp$. We have
    \[  |p|=\sum_{v\in p}\sum_{e\in A: v\in e}f(e)=|f|=r.  \]
   \end{proof}

\section{Constructions}

Looms are varied, and yet scarce enough to make their construction  challenging. 
The simplest loom  $\U$, in which $A=B=\{\{v\}\}$, has already been mentioned.  From it we shall be able to construct many others, using simple operations, the first of which is {\em composition}. 

\subsection{Composition of looms}
Given two hypergraphs $A$ and $C$ on disjoint vertex sets, their {\em join} $A*C$ is $\{a \cup c \mid a \in A, c \in C\}$. 
Let $\L_1=(A,B_1)$ be an $(a,b)$-loom and $\L_2=(C,B_2)$ be a $(c,b)$-loom, where $V_1=V(A)=V(B_1)$ is disjoint from $V_2=V(C)=V(B_2)$.
Let $S=A * C$ and $T=B_1 \cup B_2$. The pair $\L_1 \cplus_1 \L_2=(S,T)$ is called the {\em $1$-composition} of $\L_1$ and $\L_2$. The $1$ indicates that the $*$ operation is applied to the first coordinate. A similar definition and notation apply to \emph{$2$-compositions}. 

\begin{lemma}
  For an $(a,b)$-loom $\L_1$ and a $(c,b)$-loom $\L_2$ on disjoint vertex sets, $\L_1 \cplus_1 \L_2$ is an  $(a+c,b)$-loom.
\end{lemma}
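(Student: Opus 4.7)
The plan is to verify the four loom axioms in turn. Uniformity is immediate: $S = A * C$ consists of edges of size $a + c$ because $V_1 \cap V_2 = \emptyset$, and $T = B_1 \cup B_2$ is $b$-uniform by hypothesis. Orthogonality $S \perp T$ splits by cases on which component of $T$ an edge $t$ lives in: if $t \in B_1 \subseteq V_1$, then for $s = a \cup c \in S$ we have $s \cap t = a \cap t$, which has size $1$ by $A \perp B_1$; the $B_2$ case is symmetric.

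Next I would compute $\tau(T)$. Since $B_1$ and $B_2$ live on disjoint vertex sets, any cover of $T$ splits into independent covers of $B_1$ and $B_2$, giving $\tau(T) = \tau(B_1) + \tau(B_2) = a + c$. The key structural step, and the one I view as the main obstacle, is the analogous analysis of covers of $S$. The claim I would prove is: \emph{any cover $K$ of $S$ must satisfy $K \cap V_1 \in C(A)$ or $K \cap V_2 \in C(C)$.} Indeed, if not, one could find $a \in A$ disjoint from $K \cap V_1$ and $c \in C$ disjoint from $K \cap V_2$, making $a \cup c \in S$ miss $K$. This dichotomy is what makes the join behave well under covering.

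From this dichotomy I would immediately deduce $\tau(S) = b$: any cover of $S$ contains a cover of $A$ or of $C$, both of which have size $\ge b$; and any single edge $t \in B_1$ covers $S$ by orthogonality, giving $\tau(S) \le b$. To finish condition (4), I would verify $T = C_b(S)$ using the same dichotomy: if $K$ is a $b$-cover of $S$ and (WLOG) $K \cap V_1$ covers $A$, then $|K \cap V_1| \ge b = |K|$, forcing $K \subseteq V_1$ and $K \in C_b(A) = B_1$; conversely every $b$-edge of $T$ is a cover of $S$ by orthogonality.

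For condition (4)'s other half, $S = C_{a+c}(T)$, the argument is cleaner because $T$ decomposes across $V_1 \sqcup V_2$. A cover $K$ of $T$ of size $a + c$ must satisfy $|K \cap V_1| \ge \tau(B_1) = a$ and $|K \cap V_2| \ge \tau(B_2) = c$, so equality holds in both and $K \subseteq V_1 \cup V_2$. Then $K \cap V_1 \in C_a(B_1) = A$ and $K \cap V_2 \in C_c(B_2) = C$, giving $K = (K \cap V_1) \cup (K \cap V_2) \in A * C = S$. The reverse inclusion $S \subseteq C_{a+c}(T)$ is immediate from orthogonality and the uniformity of $S$. This completes the verification of all four loom conditions.
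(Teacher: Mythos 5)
Your proof is correct and complete. The paper gives no standalone proof of this lemma (it notes the argument is contained in the proof of \refT{thm:blowupisaloom}), and your direct verification --- in particular the dichotomy that a cover of $A*C$ must restrict to a cover of $A$ on $V_1$ or a cover of $C$ on $V_2$, and hence a minimum cover lies wholly in one side and equals an edge of $B_1$ or $B_2$ --- is precisely the specialization of \refCl{claim:geneachintersection} and \refCl{claim:genwholeintersection} of that proof to the pattern $\V_2$, so the two approaches coincide.
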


The proof is easy, and is contained in the proof of ~\refT{thm:blowupisaloom} below.

A loom that is the composition of two looms is said to be {\em decomposable}.
A hypergraph is defined to be \emph{connected} if its $1$-skeleton (the collection of subsets of size $2$ of the edges) is a connected graph. 

For sets $S$ and $T$, we define
\[ S\upharpoonright T=\{s\cap T\mid s\in S\}. \]

\begin{lemma}\label{lemma:decomposablenotconnected}
A loom  is decomposable if and only if one of its components is not connected.
\end{lemma}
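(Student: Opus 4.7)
The plan is to prove both directions by systematically relating the disconnection of a component with a composition structure. For the forward direction, if $\L = \L_1 \cplus_1 \L_2$ where $\L_i = (A_i, B_i)$ live on disjoint vertex sets $V_1, V_2$, then by definition the second component of $\L$ is $B = B_1 \cup B_2$, with each $B_i$ nonempty and supported on $V_i$. Hence the $1$-skeleton of $B$ has no edge crossing the $V_1\,|\,V_2$ partition, so $B$ is disconnected. The $2$-composition case symmetrically makes $A$ disconnected.

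For the converse, suppose without loss of generality that $B$ is disconnected. I would choose a partition $V = V_1 \sqcup V_2$ with both sides nonempty such that every edge of $B$ lies entirely in $V_1$ or entirely in $V_2$ (for instance, let $V_1$ be a union of connected components of the $1$-skeleton). Set $B_i = \{b \in B : b \subseteq V_i\}$, $A_i = \{a \cap V_i : a \in A\}$, and $r_i = \tau(B_i)$. Because $B_1$ and $B_2$ live on disjoint vertex sets, $\tau(B) = r_1 + r_2$, so $r_1 + r_2 = r$. The main work is to check:
\begin{enumerate}
    \item[(a)] $A_i = C_{r_i}(B_i)$ and $A = A_1 * A_2$. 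Any $a \in A$ is a minimum cover of $B$ of size $r_1+r_2$, forcing $a \cap V_i$ to be a minimum cover of $B_i$; conversely, pasting any min cover of $B_1$ with any min cover of $B_2$ gives a cover of $B$ of size exactly $r$, hence an edge of $C_r(B) = A$. This also yields $A = A_1 * A_2$.
    \item[(b)] $\tau(A_i) = s$. A cover of $A_i$ is automatically a cover of $A$ (any $a \in A$ meets $a \cap V_i \in A_i$), so $\tau(A_i) \ge \tau(A) = s$; and any edge of $B_i$ covers $A_i$ with size $s$, giving the reverse inequality.
    \item[(c)] $B_i = C_s(A_i)$. For $b \in B_i$, orthogonality to $A$ combined with $b \subseteq V_i$ gives $|b \cap a_i| = 1$ for every $a_i \in A_i$, so $b$ is a cover of $A_i$ of size $s = \tau(A_i)$, hence $b \in C_s(A_i)$. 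Conversely, any $c \in C_s(A_i)$ meets every $a \in A$ through $a \cap V_i$, so $c \in C_s(A) = B$, and $c \subseteq V_i$ places $c$ in $B_i$.
\end{enumerate}

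These together show that $(A_i, B_i)$ satisfies all four axioms of an $(r_i, s)$-loom (uniformity is inherited; the vertex identity $V(A_i) = V(B_i) = V_i$ follows from the paper's convention $V(C_k(H)) = V(H)$ applied within $V_i$), and that $(A, B) = (A_1 * A_2,\ B_1 \cup B_2) = (A_1, B_1) \cplus_1 (A_2, B_2)$. The case where $A$ rather than $B$ is disconnected gives a $2$-composition by the symmetric argument. The main obstacle I expect is bookkeeping: making sure orthogonality, covering, and uniformity each flow correctly between $\L$ and its putative summands, and verifying that the two new loom axioms $A_i = C_{r_i}(B_i)$ and $B_i = C_s(A_i)$ are established in the right order so that $\tau(A_i) = s$ is available when needed for (c).
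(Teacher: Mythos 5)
Your proof is correct and follows essentially the same route as the paper: restrict $A$ to the vertex sets of the components of $B$, verify that each restricted pair is a loom, and recover $A$ as the join. The only cosmetic difference is that you define $r_i=\tau(B_i)$ and deduce the uniformity of $A_i$ from additivity of $\tau$ over the disjoint pieces, whereas the paper first proves $A_i$ is uniform by an exchange argument; both orderings work.
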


\begin{proof}
    If $\L=(A,B)$ is decomposable, then by definition, at least one of $A$ or $B$ is not connected.

   For the other direction, let $\L=(A,B)$ be an $(r,s)$-loom in which one of the components, say $B$, is not connected. That is, $B=B_1\cup B_2$, where $V(B_1) \cap V(B_2)=\emptyset$.
   
   Let $A_1=A\upharpoonright V(B_1)=\{a\cap V(B_1)\mid a\in A \}$ and $A_2=A\upharpoonright V(B_2)=\{a \cap V(B_2)\mid a\in A \}$. We shall prove that $(A_1,B_1), (A_2,B_2)$ are looms and $(A,B)=(A_1,B_1)\cplus_1 (A_2,B_2)$.

\begin{claim}\label{claim:r1r2r}
For $i=1,2$, $A_i$ is $r_i$-uniform for some $r_i$ satisfying $r_1+r_2=r$.
\end{claim}
\begin{proof}[Proof of the claim]
    Let $e, f \in A$, and assume for negation that (say) $|f\cap V(B_1)|<|e\cap V(B_1)|$. Then $(f\cap V(B_1))\cup (e\cap V(B_2))$ is a cover of $B$ of size smaller than~$r=|e|$, contrary to the assumption that $\tau(B)=r$.
\end{proof} 
Let $r_i$  be the uniformity of $A_i$ for $i=1,2$, which by~\refCl{claim:r1r2r} satisfies $r_1+r_2=r$.
\begin{claim}
$(A_i,B_i)$ is an $(r_i,s)$-loom for $i=1,2$.    
\end{claim}    
\begin{proof}[Proof of the claim]

First we prove that $(A_i, B_i)$ are orthogonal. Let $a_i \in A_i, b \in B_i\subseteq B$. Then $a_i = a \cap V(B_i)$ for some $a \in A$. Since $(A,B)$ are orthogonal, $|a \cap b|=1$ and $a\cap b\subseteq V(B_i)$, meaning that $a_i\cap b=a\cap b$ and $|a_i \cap b|=1$.

Next we show that $\tau(B_i)=r_i$ and $C_{r_i}(B_i)=A_i$ for $i=1,2$. 
For any $a_i\in A_i$, we have $a_i=a\cap V(B_i)$ for some $a\in A$. Since $a$ is a cover of $B_i$, then  $a_i=a\cap V(B_i)$ is also a cover of $B_i$. Therefore $A_i\subseteq C(B_i)$ and $\tau(B_i)\le r_i$. Suppose $e$ is a cover of $B_i$ of size at most $r_i$. We take $a\in A$ and $a_j=a\cap V(B_j)$ is a cover of $B_j$ for $j\neq i$. Hence $e\cup a_j$ is a cover of $B=B_i\cup B_j$. Therefore $\tau(B_i)=r_i$ (otherwise we have a cover of $B$ of size less than $r_i+r_j=r$) and $e\cup a_j=a'$ for some $a'\in A$ (otherwise $C_r(B)\neq A$) so that $e=a'\cap V(B_i)\in A_i$, which proves $C_{r_i}(B_i)=A_i$. 

Finally we show that $\tau(A_i)=s$ and $C_s(A_i)=B_i$. Since a cover of $A_i$ is a cover of $A$, we have $\tau(A_i)\ge \tau(A)\ge s$ and $C_s(A_i)\subseteq C_s(A)\upharpoonright V(B_i)=B_i$. 
On the other hand, any element of $B_i$ is a cover of $A_i$, therefore $\tau(A_i)=s$ and $B_i\subseteq C_s(A_i)$. Hence $C_s(A_i)=B_i$.
\end{proof}
 
The above proof shows that $A_1*A_2\subseteq C_r(B)=A$, and by definition we have $A\subseteq A_1*A_2$. Therefore $A=A_1*A_2$ and then $(A,B)=(A_1,B_1)\cplus_1 (A_2, B_2)$. 
\end{proof}

On the way we proved:

\begin{lemma}
   Let $(A,B)$ be a loom.  If $C$ is a connected component of $B$, then $(A \upharpoonright V(C),C)$ is a loom.
\end{lemma}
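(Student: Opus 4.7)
The plan is to observe that this statement is essentially a corollary of the proof of Lemma~\ref{lemma:decomposablenotconnected}; only a tiny bit of extra bookkeeping is needed to pass from ``$B$ has (at least) two components'' to ``$C$ is a single component.''

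First, I would dispose of the trivial case. If $B$ is connected, then $C = B$, and since $V(A) = V(B) = V(C)$ (by the lemma stating $V(A) = V(B)$ in an $(r,s)$-loom), we get $A \upharpoonright V(C) = A$ and $(A \upharpoonright V(C), C) = (A, B)$ is a loom by hypothesis.

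Otherwise, write the decomposition of $B$ into connected components as $B = C \sqcup B'$, where $B'$ is the union of the remaining components, regarded as a single (possibly disconnected) subhypergraph. The vertex sets $V(C)$ and $V(B')$ are disjoint precisely because $C$ is a full connected component of $B$. Now apply verbatim the argument in the proof of Lemma~\ref{lemma:decomposablenotconnected} with the assignments $B_1 := C$ and $B_2 := B'$: that argument only uses that $V(B_1) \cap V(B_2) = \emptyset$ and that $B = B_1 \cup B_2$, not that each $B_i$ is itself connected. It establishes (via Claim~\ref{claim:r1r2r} and the subsequent claim) that for $A_1 := A \upharpoonright V(B_1)$ there is an integer $r_1$ with $A_1$ being $r_1$-uniform, $\tau(A_1) = s$, $\tau(C) = r_1$, $A_1 \perp C$, $C_{r_1}(C) = A_1$, and $C_s(A_1) = C$. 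This is precisely the statement that $(A \upharpoonright V(C), C)$ is an $(r_1, s)$-loom.

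There is no real obstacle here — the main thing to verify when writing it up is that each step of the earlier proof goes through when $B_2 = B'$ is allowed to be disconnected (or even empty, handled by the initial case). Since every argument in Lemma~\ref{lemma:decomposablenotconnected} uses $B_2$ only through its vertex set $V(B_2)$ and the fact that some edge of $A$ restricts to a cover of $B_2$, nothing changes.
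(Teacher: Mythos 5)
Your proposal is correct and matches the paper's own treatment: the paper derives this lemma with the remark ``On the way we proved,'' i.e., exactly by noting that the proof of \refL{lemma:decomposablenotconnected} already splits $B$ as $B_1\cup B_2$ with disjoint vertex sets \emph{without} assuming each $B_i$ is connected, so taking $B_1:=C$ and $B_2$ the union of the remaining components yields that $(A\upharpoonright V(C),C)$ is an $(r_1,s)$-loom. Your extra bookkeeping (the trivial case $C=B$ and the check that no step uses connectivity of $B_2$) is exactly the right justification.
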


The composition $\U^{\cplus_1 r}$ of $\U$ with itself $r$ times
is  $\V_r$, from \refE{ex:looms}~\eqref{eq:anr1loom}.
The $s$-fold $\cplus_2$-composition of $\V_r$ with itself
results in the loom of \refE{ex:looms}~\eqref{eq:anrsloom}.

\subsection{Blow-ups: a generalization of composition}\label{subsec:genblowup}
Composition  produces  new looms from given ones. There is a more general operation of this sort, which we call ``blow-up''. It takes a cross-intersecting pair $\P$ of hypergraphs and replaces each vertex with a loom. Under certain  conditions, the resulting pair of hypergraphs is a loom. A sufficient condition is that $\P$ itself is a loom and we replace its vertices by looms of the same size, but there are more general settings in which this is attained.

Let $\cP=(A,B)$ be a pair of orthogonal hypergraphs (not necessarily uniform), satisfying $\bigcup A=\bigcup B=V=[n]$. Then $A \subseteq C_{min}(B)$ (and  $B \subseteq C_{min}(A)$): $a-v$ is not a cover for every $v \in a$, since there is an edge of $B$ meeting $a$ just in $v$.

Let $\cP_i=(A_i,B_i)$ with $1\le i\le n$ on disjoint vertex sets, and let
$$C= \bigcup \{ A_{i_1}*\cdots*A_{i_p}\mid\{i_1,\dots, i_p\}\in A \},$$ $$D= \bigcup \{ B_{j_1}*\cdots*B_{j_q}\mid \{j_1,\dots, j_q\}\in B  \}.$$ 
We then call $(C,D)$ a \emph{blow-up} of $\P$ and denote it by $\cP[\cP_1,\dots, \cP_n]$.

\begin{remark}
To see that compositions are a special case,  note the following.  If $\L=(A,B)$ is the (2,1)-loom  $\V_2$,  and the uniformities of the second components of $\L_1$ and $\L_2$ are the same, then $\L[\L_1,\L_2]=\L_1\cplus_1\L_2$.
\end{remark}

Theorem \ref{thm:blowupisaloom} below provides sufficient conditions for blow-ups to be looms. Before delving into them, here is an example 
against which they can be checked.

For $x,y,z<10$  we write below $xy$ for $\{x,y\}$ and $xyz$ for
$\{x,y,z\}$.

\begin{example}\label{ex:vane}

Number the vertices of the $3\times 3$ grid $1,2,\ldots ,9$ so that the columns are $147,258,369$ and the rows $123,456,789$.

Let    
\[ C=\{147,258,369,159,158,247,259,368 \} \]
and 
\[D=\{123,456,789,357,126,345,489,567\}.\]
 Denote  the pair $(C,D)$ by 
$\V_{3,3}$.

To see that $\V_{3,3}$ is a blow-up, let $\cP=(A,B)$ be a pair of hypergraphs on $\{x_1,x_2,x_3,x_4,x_5\}$, where 
\[A=\{x_1x_4,x_2x_5,x_1x_3x_5\}\text{ and } B=\{x_1x_2,x_4x_5,x_2x_3x_4 \}.\]
Let $\L_1=(\{\{1\},\{2\} \},\{12\})$ and $\L_5=(\{\{8\},\{9\} \},\{89\})$ be $(1,2)$-looms, $\L_2=(\{36\},\{ \{3\},\{6\}\})$ and $\L_4=(\{47\},\{\{4\},\{7\} \})$ be $(2,1)$-looms, and $\L_3=(\{\{5\} \},\{\{5\}\})$ be a $(1,1)$-loom.
Then $\cP[\L_1,\L_2,\L_3,\L_4,\L_5]=(C,D)$.

We claim that  $\V_{3,3}$ is a $(3,3)$-loom. This can be checked directly, but it also follows from the next theorem.
\end{example}

\begin{theorem}\label{thm:blowupisaloom}
    Let $\cP=(A,B)$ be an orthogonal pair of hypergraphs satisfying $\bigcup A=\bigcup B$.
    Let $\L_i=(A_i,B_i)$ be $(r_i,s_i)$-looms for $1\le i\le n$. If $\cP[\L_1,\dots,\L_n]=(C,D)$ satisfies  
    \begin{enumerate}
\item \label{eq:CDuniform} $C$ and $D$ are uniform - say $C$ is $c$-uniform and $D$ is $d$-uniform, and 
\item \label{eq:minimalnotinB} For any minimal cover $f$ of $A$ that is not in $B$, we have $\sum_{j\in f}s_j>d$, and 
\item \label{eq:minimalnotinA} For any minimal cover $e$ of $B$ that is not in $A$, we have $\sum_{i\in e}r_i>c$.    
    \end{enumerate}
  Then $(C,D)$ is a $(c,d)$-loom.
\end{theorem}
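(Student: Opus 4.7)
The plan is to verify each defining condition of a $(c,d)$-loom for $(C,D)$: uniformity (given by~(1)), orthogonality, the covering numbers $\tau(D)=c$ and $\tau(C)=d$, and the identities $C=C_c(D)$, $D=C_d(C)$. For orthogonality, pick $e=\bigcup_{i\in\alpha}a_i\in C$ (with $\alpha\in A$, $a_i\in A_i$) and $f=\bigcup_{j\in\beta}b_j\in D$ (with $\beta\in B$, $b_j\in B_j$). Since the looms $\L_k$ live on pairwise disjoint vertex sets,
\[|e\cap f|=\sum_{k\in\alpha\cap\beta}|a_k\cap b_k|,\]
which equals $1$ by combining the orthogonality of $\cP$ at the index level (so $|\alpha\cap\beta|=1$) with the orthogonality of the loom $\L_k$ at the common index (so $|a_k\cap b_k|=1$). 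An immediate consequence is that any edge of $C$ is a size-$c$ cover of $D$, giving $\tau(D)\le c$; likewise $\tau(C)\le d$.

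The heart of the argument is to establish $C=C_c(D)$, from which $\tau(D)=c$ will follow. Let $g$ be any cover of $D$ with $|g|\le c$, set $g_j:=g\cap V(\L_j)$, and define
\[J:=\{j:g_j\text{ is a cover of }B_j\}.\]
First, $J$ covers $B$: if some $\beta\in B$ were disjoint from $J$, then for each $j\in\beta$ we could choose $b_j\in B_j$ with $b_j\cap g_j=\emptyset$ (available since $g_j$ fails to cover $B_j$), yielding an edge $\bigcup_{j\in\beta}b_j\in D$ avoided by $g$. Now pick a minimal cover $J'\subseteq J$ of $B$. Since each $g_j$ with $j\in J$ covers $B_j$, the bound $\tau(B_j)=r_j$ gives
\[|g|\ge\sum_{j\in J}|g_j|\ge\sum_{j\in J'}r_j.\]
If $J'\notin A$, condition~(3) forces $\sum_{j\in J'}r_j>c$, contradicting $|g|\le c$. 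Hence $J'\in A$, and the $c$-uniformity of $C$ makes $\sum_{j\in J'}r_j=c$. Equality throughout then forces $J=J'\in A$, the support $\{j:g_j\ne\emptyset\}$ to coincide with $J$, and $|g_j|=r_j$ for every $j\in J$. Using $A_j=C_{r_j}(B_j)$ from the loom axioms on each $\L_j$, each $g_j$ lies in $A_j$, and therefore $g\in A_{j_1}*\cdots*A_{j_p}\subseteq C$, where $\{j_1,\ldots,j_p\}=J\in A$.

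The identities $\tau(C)=d$ and $D=C_d(C)$ follow from a fully symmetric argument with the roles of $A\leftrightarrow B$, $C\leftrightarrow D$, $r_i\leftrightarrow s_i$, $c\leftrightarrow d$ interchanged, invoking condition~(2) in place of~(3). The main obstacle is exactly the dichotomy in the covering step: conditions~(2) and~(3) are tailored to exclude the pathological case in which the minimal cover of $B$ (respectively $A$) extracted by the argument falls outside of $A$ (respectively $B$) yet still has total uniformity not exceeding $c$ (respectively $d$); without this hypothesis, $g$ could be a small cover of $D$ that is not of the product form required to lie in $C$.
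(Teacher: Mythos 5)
Your proof is correct and follows essentially the same route as the paper's: prove orthogonality by reducing $e\cap f$ to the unique common index, then decompose a small cover of one side into its pieces $g\cap V(\L_j)$, show each relevant piece is a minimum cover of the corresponding loom component, and invoke hypotheses (2)/(3) to force the index set into $A$ (resp.\ $B$). The only difference is organizational --- you package the piecewise minimality claims into a single inequality chain $c\ge|g|\ge\sum_{j\in J'}r_j= c$, whereas the paper argues each piece's minimality separately --- but the underlying ideas coincide.
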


Note that in Example \ref{ex:vane}, $\{x_1,x_5\}$ is a minimal cover of $A$ and is not in $B$, but the uniformity of the join of the second components of $\L_1$ and $\L_5$ is 4, which is greater than 3. Therefore the blow-up satisfies the assumption of~\refT{thm:blowupisaloom}, which implies that $\V_{3,3}$ is a loom.

\begin{proof}[Proof of Theorem \ref{thm:blowupisaloom}]
    \begin{claim}\label{claim:blowuporth}
    $C \perp D$. 
    \end{claim}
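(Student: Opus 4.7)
The plan is to unpack the blow-up definitions of $C$ and $D$, then chase an intersection through to the single ``non-trivial'' coordinate, where the orthogonality of $(A_k, B_k)$ gives the required unique intersection point.

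First I would fix arbitrary edges $c \in C$ and $d \in D$. By the definitions of $C$ and $D$, we can write
\[ c = a_{i_1} \cup \cdots \cup a_{i_p} \quad\text{with } \{i_1,\dots,i_p\} \in A,\ a_{i_\alpha} \in A_{i_\alpha}, \]
\[ d = b_{j_1} \cup \cdots \cup b_{j_q} \quad\text{with } \{j_1,\dots,j_q\} \in B,\ b_{j_\beta} \in B_{j_\beta}. \]

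Next I would invoke the orthogonality $A \perp B$ given in the hypothesis on $\mathcal{P}$: this tells us that the index sets satisfy $|\{i_1,\dots,i_p\} \cap \{j_1,\dots,j_q\}| = 1$. Let $k$ be the unique common index, so $k = i_\alpha = j_\beta$ for a unique pair $(\alpha,\beta)$.

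Then I would use the crucial disjointness assumption that the $V(\L_i)$ are pairwise disjoint. Since $a_{i_{\alpha'}} \subseteq V(\L_{i_{\alpha'}})$ and $b_{j_{\beta'}} \subseteq V(\L_{j_{\beta'}})$, and these vertex sets only overlap when $i_{\alpha'} = j_{\beta'}$, we obtain
\[ c \cap d = a_k \cap b_k. \]
Finally, since $\L_k = (A_k, B_k)$ is a loom, $A_k \perp B_k$, and hence $|a_k \cap b_k| = 1$. This gives $|c \cap d| = 1$, proving $c \perp d$ and therefore $C \perp D$.

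There is no real obstacle here: the argument is essentially bookkeeping, and it uses only the orthogonality of $\mathcal{P}$, the disjointness of the $V(\L_i)$, and the orthogonality within each $\L_i$. None of the three uniformity/covering hypotheses (\ref{eq:CDuniform})–(\ref{eq:minimalnotinA}) of the theorem are needed for this claim; they will presumably be invoked in later claims (establishing $\tau(C) = d$, $\tau(D) = c$, and the reconstruction identities $C = C_c(D)$, $D = C_d(C)$).
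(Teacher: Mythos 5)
Your proof is correct and follows exactly the same route as the paper's: decompose $c$ and $d$ according to the blow-up, use $A\perp B$ to isolate the unique common index $k$, use disjointness of the $V(\L_i)$ to reduce $c\cap d$ to $a_k\cap b_k$, and conclude by the orthogonality of $\L_k$. Your closing remark that conditions (1)--(3) of the theorem are not needed for this claim is also consistent with the paper, which invokes them only for the covering-number parts of the argument.
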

    \begin{proof}[Proof of the claim]
        Let $e\in C$ and $f\in D$. By the  construction $e=e_{i_1}\cup \cdots \cup e_{i_p}$ where $\{i_1,\dots,i_p\}\in A$ and $e_{i_\ell}\in A_{i_\ell}$ for every $\ell$, and $f=f_{j_1}\cup\cdots\cup f_{j_q}$ where $\{j_1,\dots,j_q \}\in B$ and $f_{j_\ell}\in B_{j_\ell}$ for every $\ell$. 
    Then by $A\perp B$, we have $\{i_1,\dots,i_p\}\cap\{j_1,\dots, j_q\}=\{k\}$, and by the disjointness of $V(\L_i)$ we have $e\cap f= e_k\cap f_k$. Since $(A_k,B_k)$ is a loom,  $|e\cap f|=|e_k\cap f_k|=1$, as claimed.
    \end{proof}

 It remains to show that $\tau(C)=d$, and $C_d(C) \subseteq D$ (so that Claim \ref{claim:blowuporth} implies $C_d(C) =D$). $\tau(D)=c$ and $C_c(D) \subseteq C$ will follow symmetrically.

\begin{claim}\label{claim:geneachintersection}
    If $f$ is a minimum cover of $C$, then for each $j\in [n]$, if $f\cap V(B_j)$ is non-empty then it belongs to $B_j$.
\end{claim}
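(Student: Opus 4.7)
The plan is to analyze a minimum cover $f$ of $C$ by tracking its restrictions $f_k:=f\cap V(\L_k)$ to each blow-up block. First I bound $|f|$ from above: by \refCl{claim:blowuporth} we have $C\perp D$, so any single edge of $D$ is already a cover of $C$, giving $|f|=\tau(C)\le d$.

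Next I introduce $K=\{k\in[n]:f_k\in C(A_k)\}$ and argue that $K$ is a cover of $A$. If instead some $\{i_1,\dots,i_p\}\in A$ avoided $K$, then for each $\ell$ I could pick $a_{i_\ell}^*\in A_{i_\ell}$ disjoint from $f_{i_\ell}$; the join $a_{i_1}^*\cup\cdots\cup a_{i_p}^*$ would be an edge of $C$ missed by $f$, contradicting that $f$ is a cover. Hence $K$ contains a minimal cover $m$ of $A$.

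The decisive step is a case split on whether $m\in B$; recall from the opening of \refSS{subsec:genblowup} that $B\subseteq C_{min}(A)$, which is what legitimises this dichotomy. If $m\notin B$, then hypothesis~\eqref{eq:minimalnotinB} gives $\sum_{k\in m}s_k>d$, and since $|f_k|\ge\tau(A_k)=s_k$ for each $k\in m$ (as $\L_k$ is a loom) and the $V(\L_k)$ are pairwise disjoint, I obtain $|f|\ge\sum_{k\in m}|f_k|>d$, contradicting $|f|\le d$. Therefore $m\in B$, and the analogous chain pins down $|f|=d$ with equality throughout: $|f_k|=s_k$ for every $k\in m$ and $f_k=\emptyset$ for $k\notin m$. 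Consequently, any $j$ with $f_j\ne\emptyset$ lies in $m$, so $f_j$ is a cover of $A_j$ of size exactly $s_j$, and hence $f_j\in C_{s_j}(A_j)=B_j$, using that $\L_j$ is a loom.

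The main subtlety will be recognising that hypothesis~\eqref{eq:minimalnotinB} is precisely the obstruction needed to rule out $m\notin B$; everything else is straightforward accounting driven by the disjointness of the $V(\L_k)$ and the loom identity $C_{s_k}(A_k)=B_k$.
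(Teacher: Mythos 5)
Your proof is correct, but it takes a genuinely different route from the paper's. The paper argues purely locally: it shows directly that each non-empty $f\cap V(B_j)$ is a minimum cover of $A_j$ via two exchange steps (if it failed to cover $A_j$, an uncovered edge of $C$ through block $j$ could be exhibited; if it covered $A_j$ but not minimally, swapping it for a minimum cover of $A_j$ would shrink $f$), and then invokes $C_{s_j}(A_j)=B_j$. Notably, the paper's proof of this claim uses neither hypothesis~\eqref{eq:minimalnotinB} nor~\eqref{eq:minimalnotinA} of \refT{thm:blowupisaloom}; those enter only in the subsequent \refCl{claim:genwholeintersection}. Your argument is instead a global count: bound $|f|\le d$ by orthogonality, locate a minimal cover $m$ of $A$ inside the set of blocks where $f$ restricts to a cover, rule out $m\notin B$ via hypothesis~\eqref{eq:minimalnotinB}, and then squeeze $d\ge|f|\ge\sum_{k\in m}|f_k|\ge\sum_{k\in m}s_k=d$ to force equality everywhere. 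What this buys you is that \refCl{claim:genwholeintersection} drops out for free (your $m$ is exactly the support pattern $J$, shown to lie in $B$), so the two claims collapse into one argument; what it costs is modularity, since the local claim no longer stands independently of the blow-up hypotheses. The accounting is sound: disjointness of the $V(\L_k)$ justifies $|f|\ge\sum_k|f_k|$, minimality of $f$ keeps it inside $\bigcup_k V(\L_k)$, and $d$-uniformity of $D$ gives $\sum_{k\in m}s_k=d$ when $m\in B$.
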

\begin{proof}[Proof of the claim]
Suppose $f\cap V(B_j)\neq\emptyset$. We claim that it is a minimum cover of $A_j$. 

First if it is not a cover of $A_j$,
let $f'=f\setminus V(B_j)$. By the minimality of $|f|$, there is an edge $\cup_{i\in a}e_i\in C$, where $a\in A$ and $e_i\in A_i$, not covered by $f'$.
In particular, we have $j\in a$. Then $e=\cup_{i\in a\setminus\{j\}}e_i\cup e_j'\in C$ is not covered by $f$, where $e_j'\in A_j$ is an edge not covered by $f\cap V(B_j)$, a contradiction to the fact that $f$ is a cover of $C$.

 Now if $f\cap V(B_j)$ 
 is not a minimum cover of $A_j$, we can take a minimum cover $e_j$ of $A_j$ and set $f'=\Big(f\setminus \big(f\cap V(B_j)\big)\Big)\cup e_j$. Note that the size of $f'$ is strictly smaller than $f$ but $f'$ covers the same edges of $C$ as $f$, contradicting the minimum assumption on $f$. 
\end{proof}

\begin{claim}\label{claim:genwholeintersection}
    If $f$ is a minimum cover of $C$, then $J=\{j:f\cap V(B_j)\neq \emptyset\}$ is a minimal cover of $A$ and is in $B$.
\end{claim}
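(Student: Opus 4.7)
The plan is to combine \refCl{claim:geneachintersection}, which says $f\cap V(B_j)\in B_j$ for every $j\in J$, with the orthogonality $C\perp D$ established in \refCl{claim:blowuporth}. Since $B_j$ is $s_j$-uniform, the vertex sets $V(B_j)$ are pairwise disjoint, and a minimum cover $f$ of $C$ must satisfy $f\subseteq V(C)=\bigcup_i V(B_i)$, these facts together give the key identity $|f|=\sum_{j\in J}s_j$. Moreover, every edge of $D$ is a cover of $C$ by orthogonality, so $|f|=\tau(C)\le d$. These two facts are the engine of the whole argument.

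First I would verify that $J$ covers $A$. Given $a\in A$, pick any $e_i\in A_i$ for each $i\in a$ and form $e=\bigcup_{i\in a}e_i\in C$; since $f\cap e\ne\emptyset$ and the pieces $V(B_i)$ are disjoint, some $i\in a$ lies in $J$. Next, for minimality of $J$, I would assume for contradiction that $J\setminus\{j_0\}$ is still a cover of $A$ for some $j_0\in J$ and show that $f'=f\setminus V(B_{j_0})$ still covers $C$: for any $e=\bigcup_{i\in a}e_i\in C$, choose $i_*\in a\cap(J\setminus\{j_0\})$; then the unique vertex in $(f\cap V(B_{i_*}))\cap e_{i_*}$, guaranteed by the internal orthogonality in $\L_{i_*}$, lies in $f'\cap e$. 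Since $|f|-|f'|=s_{j_0}\ge 1$, this contradicts the minimality of $f$.

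Finally, to show $J\in B$, I would argue by contradiction. If $J\notin B$, then $J$ is a minimal cover of $A$ not in $B$, so hypothesis~(\ref{eq:minimalnotinB}) of the theorem forces $\sum_{j\in J}s_j>d$; combined with $|f|=\sum_{j\in J}s_j\le d$, this is the desired contradiction. I expect this last step to be the only substantive one — it is the sole place where hypothesis~(\ref{eq:minimalnotinB}) is invoked, and it explains precisely why that seemingly technical assumption was built into the statement of \refT{thm:blowupisaloom}.
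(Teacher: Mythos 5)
Your proof is correct and follows essentially the same route as the paper's: cover property of $J$ via choosing $e_i\in A_i$ for $i\in a$, minimality of $J$ by deleting the superfluous pieces $f\cap V(B_{j_0})$ using Claim~\ref{claim:geneachintersection}, and membership in $B$ by playing hypothesis~(\ref{eq:minimalnotinB}) against $|f|=\sum_{j\in J}s_j\le d$. Your write-up merely makes explicit what the paper leaves terse in the last step (that $|f|=\sum_{j\in J}s_j$ and that any $f'\in D$ is a cover of $C$ of size $d$), which is a welcome clarification rather than a divergence.
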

\begin{proof}[Proof of the claim]
    If $J$ is not a cover of~$A$, then there exists $a\in A$ such that $a\cap J=\emptyset$. Choose an edge $e_i\in A_i$ for each $i\in a$. Then 
    $f\cap (\cup_{i\in a} e_i)=\emptyset$, a contradiction to the assumption that $f$ is a cover of~$C$.
      So $J$ is a cover of $A$. If it is not minimal, we can remove a subset $R$ so that $J\setminus R$ is a minimal cover of $A$, then by~\refCl{claim:geneachintersection}, $f\setminus (\cup_{j\in R}f\cap V(B_j))$ is still a cover of $C$ but is of smaller size than $f$, a contradiction to the minimum assumption of~$f$.
      To see that $J$ is in $B$, negating this statement means that  $f$ is of size strictly larger than $f'$ for any $f'\in D$. But $f'$ is a cover of $C$, a contradiction to the fact that $f$ is a minimum cover.
\end{proof}

Combining~\refCl{claim:geneachintersection} and~\refCl{claim:genwholeintersection} with the definition of the blow-up $(C,D)=\cP[\L_1,\dots,\L_n]$, we have $\tau(C)=d, C_d(C)\subseteq D$ so that $C_d(C)=D$ by~\refCl{claim:blowuporth}, and symmetrically we have $\tau(D)=c, C_c(D)=C$ --- the remaining conditions required on $(C,D)$ for being a loom. This completes the proof of the theorem.    
 \end{proof}

\begin{corollary}
       Let $\L=(A,B)$ be a $(p,q)$-loom on the vertex set $V=[n]$, and for each $1\le i\le n$ let $\L_i=(A_i,B_i)$ be an $(r_i,s_i)$-loom, where $V(\L_i)$ are disjoint. If $r_{i_1}+\cdots +r_{i_p}= c$ for every $\{i_1,\dots,i_p\}\in A$ and $s_{j_1}+\cdots + s_{j_q}=d$ for every $\{j_1,\dots, j_q\}\in B$, and $r_{i_1}+\cdots +r_{i_{p+1}}>c $ for any distinct vertices $i_1,\dots,i_{p+1}\in V$ and $s_{j_1}+\dots+s_{j_{q+1}}>d$ for any distinct vertices $j_1,\dots, j_{q+1}\in V$, then $(C,D)=\L[\L_1,\dots,\L_n]$ is a $(c,d)$-loom.
\end{corollary}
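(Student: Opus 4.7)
The plan is to apply Theorem~\ref{thm:blowupisaloom} directly to $\cP:=\L$ and the collection $\L_1,\dots,\L_n$, reducing the corollary to verifying the three hypotheses of that theorem.

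First I will check uniformity. Every edge of $C$ has the form $\bigcup_{\ell=1}^p e_{i_\ell}$ where $\{i_1,\dots,i_p\}\in A$ and $e_{i_\ell}\in A_{i_\ell}$. Since the vertex sets $V(\L_i)$ are pairwise disjoint and $A_{i_\ell}$ is $r_{i_\ell}$-uniform, this edge has size $r_{i_1}+\cdots+r_{i_p}$, which by the first numerical hypothesis equals $c$. The same argument using the second hypothesis shows that $D$ is $d$-uniform, establishing condition (\ref{eq:CDuniform}).

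Next I will verify condition (\ref{eq:minimalnotinB}). The key observation is that since $\L$ is a $(p,q)$-loom we have $\tau(A)=q$ and $B=C_q(A)$, so $B$ is exactly the set of all $q$-element covers of $A$. Every minimal cover of $A$ of size $q$ is therefore a member of $B$; hence any minimal cover $f\notin B$ satisfies $|f|\ge q+1$. Choosing any $q+1$ distinct indices $j_1,\dots,j_{q+1}\in f$ and invoking the fourth hypothesis gives
\[
    \sum_{j\in f}s_j \;\ge\; s_{j_1}+\cdots+s_{j_{q+1}} \;>\; d,
\]
as required. Condition (\ref{eq:minimalnotinA}) is then verified by the symmetric argument: because $\tau(B)=p$ and $A=C_p(B)$, any minimal cover $e\notin A$ of $B$ has $|e|\ge p+1$, and the third numerical hypothesis yields $\sum_{i\in e}r_i>c$. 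With all three conditions in place, Theorem~\ref{thm:blowupisaloom} concludes that $(C,D)=\L[\L_1,\dots,\L_n]$ is a $(c,d)$-loom.

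I do not anticipate any real obstacle, since the argument is a direct bookkeeping verification of the hypotheses of Theorem~\ref{thm:blowupisaloom}. The one point worth highlighting is the observation that, because $B$ is \emph{literally} the set of all $q$-element covers of $A$ (rather than merely some collection of minimum covers), a minimal cover of $A$ lying outside $B$ is forced to have strictly more than $q$ elements; this is what allows the ``$(p{+}1)$-term inequality'' hypothesis on the $r_i$ (respectively $s_j$) to be applied.
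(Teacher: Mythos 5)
Your proof is correct and follows essentially the same route as the paper: verify uniformity from the first two numerical hypotheses, then use the fact that $B=C_q(A)$ (resp.\ $A=C_p(B)$) forces any minimal cover outside $B$ (resp.\ $A$) to have size at least $q+1$ (resp.\ $p+1$), so the $(q{+}1)$- and $(p{+}1)$-term inequalities give conditions \eqref{eq:minimalnotinB} and \eqref{eq:minimalnotinA} of Theorem~\ref{thm:blowupisaloom}. The paper's proof is just a more terse version of the same bookkeeping, handling one of the two symmetric conditions explicitly.
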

\begin{proof}
\eqref{eq:CDuniform} is valid by the assumption of the corollary. By the symmetry between~\eqref{eq:minimalnotinB} and~\eqref{eq:minimalnotinA},
we only need to verify~\eqref{eq:minimalnotinA} of~\refT{thm:blowupisaloom}:
if $e$ is a minimal cover of $B$ but is not in $A$, then $e$ is not a minimum cover of $B$, which means $|e|>p$. Therefore by the assumption of the corollary, $\sum_{i\in e}r_i > c$. Hence $\L[\L_1,\dots,\L_n]$ satisfies~\eqref{eq:minimalnotinA} of~\refT{thm:blowupisaloom}.
\end{proof}

Next we show that having perfect matchings and fractional perfect matchings are preserved (in a sense to be stated below) by blow-ups.

\begin{theorem}\label{thm:pmiffpmineach}
Let $\L=(A,B)$ be a $(p,q)$-loom,  and for every $i \in V(\L)=[n]$, let $\L_i=(A_i,B_i)$ be an $(r_i,s_i)$-loom. Let  $(C,D)=\L[\L_1,\dots,\L_n]$, and suppose it is a $(c,d)$-loom.  If $s_{i_1}=\cdots =s_{i_p}$ for $\{i_1,\cdots, i_p\}\in A$ and $\nu(A)=q$, then  $\nu(C)=d$ if and only if $\nu(A_i)=s_i$ for each $i$. 
\end{theorem}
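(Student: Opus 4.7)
The plan is to show both directions by exploiting Lemma \ref{lemma:loompm} applied to $\L$, to each $\L_i$, and to the blow-up loom $(C,D)$: in each case, having a perfect matching is equivalent to having a matching of the uniformity of the partner hypergraph.

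For the forward direction ($\nu(C)=d \Rightarrow \nu(A_i)=s_i$ for each $i$), I would take a matching $M$ of size $d$ in $C$. By \refL{lemma:loompm} applied to $(C,D)$, $M$ is a perfect matching of $C$. Fix $i \in [n]$. Every edge of $C$ either is disjoint from $V(\L_i)$ or has its intersection with $V(\L_i)$ equal to some edge of $A_i$ (by the definition of the blow-up, since $C=\bigcup\{A_{i_1}*\cdots *A_{i_p}:\{i_1,\dots,i_p\}\in A\}$). Restricting the edges of $M$ to $V(\L_i)$ therefore gives a set of edges of $A_i$, which are pairwise disjoint since $M$ is a matching, and whose union is all of $V(\L_i)=V(A_i)$ since $M$ is perfect. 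This is a perfect matching of $A_i$, and another application of \refL{lemma:loompm} gives $\nu(A_i)=s_i$.

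For the backward direction, I would use both extra hypotheses. Since $\nu(A)=q$, \refL{lemma:loompm} applied to $\L$ furnishes a perfect matching $\{a_1,\dots,a_q\}$ of $A$, which partitions $[n]$. For each $k$, write $a_k=\{i_1^{(k)},\dots,i_p^{(k)}\}$; by the hypothesis, all $s_{i_j^{(k)}}$ share a common value $s^{(k)}$. For each $i$, pick a perfect matching $M_i=\{f_{i,1},\dots,f_{i,s_i}\}$ of $A_i$ (provided by $\nu(A_i)=s_i$ and \refL{lemma:loompm}); the key point is that within each block $a_k$ the matchings $M_{i_j^{(k)}}$ have the same number of edges, so I can index them compatibly. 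For each $k$ and each $\ell=1,\dots,s^{(k)}$, define
\[ e_\ell^{(k)} = \bigcup_{j=1}^p f_{i_j^{(k)},\,\ell}.\]
Since $a_k \in A$, each $e_\ell^{(k)}$ is an edge of $A_{i_1^{(k)}}*\cdots *A_{i_p^{(k)}}\subseteq C$. These edges are pairwise disjoint (for fixed $k$ and varying $\ell$ they are disjoint because each $M_{i_j^{(k)}}$ is a matching, and for different $k$ they involve disjoint vertex sets $V(\L_i)$ because the $a_k$'s partition $[n]$), and together they cover $\bigcup_{i\in[n]}V(\L_i)=V(C)$. Hence they form a perfect matching of $C$, which by \refL{lemma:loompm} has size $d$, giving $\nu(C)=d$.

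Neither direction presents a serious obstacle; the only subtle point is the bookkeeping in the backward direction, where the equality $s_{i_1}=\cdots=s_{i_p}$ within each edge of $A$ is precisely what allows one to pair up the edges of $M_{i_1},\dots, M_{i_p}$ into $s^{(k)}$ blocks that each form an edge of $C$. Without this equality, the matchings $M_{i_j}$ could not be amalgamated consistently.
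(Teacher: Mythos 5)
Your proof is correct and follows essentially the same route as the paper: the forward direction restricts a perfect matching of $C$ to each $V(\L_i)$ and invokes Lemma~\ref{lemma:loompm}, and the backward direction amalgamates perfect matchings of $A$ and of the $A_i$ (using the equal-$s_i$ hypothesis to pair up edges within each block) into a perfect matching of $C$. Your write-up is merely more explicit about the bookkeeping than the paper's.
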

\begin{proof}
For the ``if" part, let $a_1,\dots, a_q$ be a matching in $A$ and $e_{i,1}, \dots, e_{i, s_i}$ be a  matching in $A_i$. Then $\cup_{i\in a_\ell}e_{i,j}$ for $j=1,\dots,s_i$, $\ell=1,\dots,q$ is a matching of size $d$ in $C$. 

For the ``only if" part, by~\refL{lemma:loompm}, assume that $a_1,\dots, a_d$ is a perfect matching in $C$. Then $a_1\cap V(A_i),\dots , a_d\cap V(A_i)$ form a perfect matching of $A_i$ (ignoring sets  $a_j\cap V(A_i)$ that are empty),  
so by \refL{lemma:loompm} their matching number is $s_i$. 
\end{proof}

\begin{corollary}
    Let $\L=\L_1\cplus_1\L_2=(C,D)$ for $\L_1=(A_1,B_1)$ and $\L_2=(A_2,B_2)$. Then $C$ has a perfect matching if and only if $A_1$ and $A_2$ both have  perfect matchings. Also, $D$ has a perfect matching if and only if both $B_1$ and $B_2$  have a perfect matching. 
\end{corollary}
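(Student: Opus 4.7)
The plan is to derive both equivalences directly from the definition of $\cplus_1$, using \refL{lemma:loompm} to translate between ``perfect matching'' and ``matching number equals the opposite uniformity.'' Recall that $\cplus_1$ is only defined when $\L_1$ and $\L_2$ share a common uniformity $b$ for their second components, and on disjoint vertex sets. Thus $C = A_1 * A_2$ is $(r_1+r_2)$-uniform, $D = B_1 \cup B_2$ is $b$-uniform, and $V(C) = V(D) = V(A_1) \sqcup V(A_2)$.

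For the statement about $C$, I would first observe that since $V(A_1) \cap V(A_2) = \emptyset$, two edges $a_1 \cup a_2$ and $a_1' \cup a_2'$ of $A_1 * A_2$ are disjoint if and only if $a_1 \cap a_1' = \emptyset$ and $a_2 \cap a_2' = \emptyset$. Hence a matching of $C$ is exactly the same data as a pair $(M_1, M_2)$ of matchings in $A_1, A_2$ of equal cardinality, and such an $M$ covers $V(C)$ iff each $M_i$ covers $V(A_i)$. One direction is then immediate. For the converse, perfect matchings of $A_1$ and $A_2$ both have size $b$ by \refL{lemma:loompm}, so they can be paired up arbitrarily to produce a perfect matching of $C$.

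For the statement about $D$, I would use that $V(B_1)$ and $V(B_2)$ are disjoint, so each edge of $D = B_1 \cup B_2$ lies entirely in one of $V(B_1)$ or $V(B_2)$. Consequently every matching in $D$ splits as $M_1 \sqcup M_2$ with $M_i \subseteq B_i$, and this union covers $V(D)$ iff each $M_i$ covers $V(B_i)$, giving the equivalence.

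An alternative route, more consistent with the surrounding theorem, is to invoke \refT{thm:pmiffpmineach} applied to the blow-up presentation $\L_1 \cplus_1 \L_2 = \V_2[\L_1,\L_2]$ noted in the remark: for $\V_2=(A,B)$ we have $\nu(A)=1=q$, the single edge of $A$ is $\{v_1,v_2\}$, and the required condition $s_{v_1}=s_{v_2}$ is precisely the hypothesis defining $\cplus_1$. The theorem then yields the first equivalence, and the second follows by swapping the roles of the two components (or by the direct argument above). There is no substantive obstacle: both equivalences are consequences of vertex-set disjointness together with the size-matching provided by \refL{lemma:loompm}.
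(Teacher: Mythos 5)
Your proposal is correct and, via the alternative route you describe (presenting $\L_1\cplus_1\L_2$ as the blow-up $\V_2[\L_1,\L_2]$ and applying \refT{thm:pmiffpmineach}, then swapping roles for $D$), matches the paper's intended derivation --- the paper states this as an immediate corollary of that theorem and gives no separate proof. Your primary direct argument, splitting matchings of the join and of the disjoint union across $V(A_1)\sqcup V(A_2)$ and using \refL{lemma:loompm} to match sizes, is also valid and entirely self-contained.
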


Next we claim that having a perfect fractional matching is preserved by the blow-up operation. 
\begin{theorem}\label{thm:blowuppfm}
 Given a $(c,d)$-loom $(C,D)=\L[\L_1,\dots,\L_n]$ for $\L=(A,B),\L_i=(A_i,B_i)$, where $(A,B)$ is $(p,q)$-loom and $\L_i$ are $(r_i,s_i)$-looms. If $s_{i_1}=\cdots =s_{i_p}$ for $\{i_1,\cdots, i_p\}\in A$, and
  $\nu^*(A)=q, \nu^*(A_i)=s_i$ for each $i$ then $\nu^*(C)=d$.
\end{theorem}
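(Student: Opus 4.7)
The plan is to construct an explicit perfect fractional matching of $C$ by multiplying together a perfect fractional matching of $A$ with perfect fractional matchings of each $A_i$. By \refL{lemma:loompfm} and $\nu^*(A)=q$, there is a perfect fractional matching $f\colon A\to\R_{\ge 0}$ with $|f|=q$; similarly, for each $i\in[n]$ the hypothesis $\nu^*(A_i)=s_i$ provides a perfect fractional matching $g_i\colon A_i\to\R_{\ge 0}$ with $|g_i|=s_i$.

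The hypothesis ``$s_{i_1}=\cdots=s_{i_p}$ for every $\{i_1,\dots,i_p\}\in A$'' says that $i\mapsto s_i$ is constant on each edge of $A$. Two edges of $A$ sharing a vertex then force their $s_i$-values to agree, so the values are constant on the connected components of (the $1$-skeleton of) $A$; by \refL{lemma:decomposablenotconnected} the multi-component case splits $\L$ as a $\cplus_1$-composition that can be handled summand by summand, so we may assume $s_i\equiv s$ is a single constant. In particular, the $d$-uniformity of $D$ forces $d=\sum_{j\in b}s_j=sq$ for every $b\in B$.

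Each edge of $C$ has a unique decomposition $e=\bigcup_{i\in a}e_i$ with $a\in A$ and $e_i\in A_i$ (the $V(\L_i)$ are pairwise disjoint). Define
\[ h(e)\;=\;\frac{1}{s^{p-1}}\,f(a)\prod_{i\in a}g_i(e_i). \]
For a vertex $v\in V(A_j)\subseteq V(C)$, the edges of $C$ containing $v$ correspond exactly to choices of $a\ni j$, $e_j\ni v$ in $A_j$, and $e_i\in A_i$ for $i\in a\setminus\{j\}$, so
\[ \sum_{e\in C:\,v\in e}h(e)=\frac{1}{s^{p-1}}\sum_{a\in A:\,j\in a}f(a)\Bigl(\sum_{e_j\in A_j:\,v\in e_j}g_j(e_j)\Bigr)\prod_{i\in a\setminus\{j\}}|g_i|=\frac{1}{s^{p-1}}\cdot 1\cdot 1\cdot s^{p-1}=1, \]
using perfection of $g_j$ at $v$, perfection of $f$ at $j$, and $|g_i|=s$. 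Hence $h$ is a (perfect) fractional matching of $C$. Its total weight is
\[ |h|\;=\;\frac{1}{s^{p-1}}\sum_{a\in A}f(a)\prod_{i\in a}|g_i|\;=\;\frac{s^p\cdot q}{s^{p-1}}\;=\;sq\;=\;d, \]
so $\nu^*(C)\ge d$, while $\nu^*(C)\le\tau(C)=d$ is immediate. This gives $\nu^*(C)=d$.

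The two identities above are routine product-distribution calculations; the only substantive subtlety is upgrading the per-edge equality hypothesis on the $s_i$ to a single global constant $s$, which is precisely what makes the $s^{p-1}$ normalization cancel cleanly in both the vertex-saturation and the weight computation.
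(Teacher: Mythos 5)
Your construction is exactly the paper's: the same product weighting $f(a)\prod_{i\in a}g_i(e_i)/s^{p-1}$ and the same vertex-saturation computation, so the proof is correct and takes essentially the same approach. The only difference is your preliminary reduction to a globally constant $s$, which is an avoidable detour (the paper simply writes $s(a)$ per edge $a\in A$, and perfection together with \refL{lemma:loompfm} already forces the total weight to be $d$); note also that in the paper's notation a disconnected $A$ yields a $\cplus_2$-composition, not a $\cplus_1$-composition.
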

\begin{proof}
By~\refL{lemma:loompfm}, there exists
  a perfect fractional matching $f:A\rightarrow\R_{\ge 0}$ of~$A$, and $w:A_1\cup\cdots\cup A_n\rightarrow \R_{\ge 0}$  a perfect fractional matching of each $A_i$. Note that 
$w_i:=\sum_{e\in A_i}w(e)=s_i$. 
For each $e=e_{i_1}\cup \cdots \cup e_{i_p}\in C$, where $a=\{i_1,\dots, i_p\}\in A$ and $e_{i_\ell}\in A_{i_\ell}$, let $g(e):=f(a)\prod_{\ell=1}^pw_{i_\ell}/s(a)^{p-1}$, where $s(a):=s_{i_1}=\cdots =s_{i_p}$. We claim that $g$ is a perfect fractional matching of $C$, which by~\refL{lemma:loompfm} implies $\nu^*(C)=d$.

It is enough to show that for every $v\in V(C)$, $\sum_{e\in C:v\in e}g(e)=1$. Indeed, assume $v\in V(A_{i})$. Then
\begin{align*}
    &\sum_{e\in C:v\in e}g(e)\\
    =&\sum_{a=\{i_1,\cdots, i_p\}\in A:i\in a}\sum_{e_1\in A_{i}:v\in e_1}\sum_{e_2\in A_{i_2}}\cdots\sum_{e_p\in A_{i_p}}g(e_1\cup\cdots\cup e_p)\\
    =&\sum_{a=\{i_1,\cdots, i_p\}\in A:i\in a}\sum_{e_1\in A_{i}:v\in e_1}\sum_{e_2\in A_{i_2}}\cdots\sum_{e_p\in A_{i_p}}f(\{i_1,\dots, i_p\})w_1\cdots w_p/s(a)^{p-1}\\
    =&\sum_{a=\{i_1,\dots, i_p\}\in A:i\in a}f(\{i_1,\dots, i_p\})\sum_{e_1\in A_{i}:v\in e_1}w_1\sum_{e_2\in A_{i_2}}w_2\cdots \sum_{e_p\in A_{i_p}}w_p/s(a)^{p-1}\\
    =& 1\cdot 1\cdot s(a)^{p-1}/s(a)^{p-1}= 1,
\end{align*}
    which completes the proof. 
\end{proof}

\begin{corollary}
    If $\L,\L_1,\cdots,\L_n$ satisfy the assumptions of~\refT{thm:blowuppfm} and~\refCon{conjmain1}, then $\L[\L_1,\dots, \L_n]$ satisfies~\refCon{conjmain1}. In particular, if the $(r_1,s)$-loom $\L_1$ and the $(r_2,s)$-loom $L_2$ satisfy~\refCon{conjmain1}, so does $\L_1\cplus_1\L_2$.
\end{corollary}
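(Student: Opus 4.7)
The plan is to recognize that \refCon{conjmain1} for an $(r,s)$-loom $(A,B)$ asserts exactly that both $A$ and $B$ admit perfect fractional matchings: by LP duality, $\tau^*(A)=\nu^*(A)$, and by \refL{lemma:loompfm}, $\nu^*(A)=s$ is equivalent to $A$ having a perfect fractional matching (and symmetrically for $B$). So I would first translate the hypothesis on $\L$ and each $\L_i$ into the equalities $\nu^*(A)=q$, $\nu^*(B)=p$, $\nu^*(A_i)=s_i$, and $\nu^*(B_i)=r_i$ for every $i\in[n]$, which are precisely the fractional-matching inputs demanded by \refT{thm:blowuppfm}.

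With these in hand, the first half of \refCon{conjmain1} for $(C,D)$, namely $\nu^*(C)=d$, is a direct invocation of \refT{thm:blowuppfm} (the uniformity condition $s_{i_1}=\cdots=s_{i_p}$ on $A$-edges being part of ``the assumptions of \refT{thm:blowuppfm}''). For the other half, $\nu^*(D)=c$, I would exploit the symmetry inherent in the loom formalism: $(B,A)$ is a $(q,p)$-loom, each $(B_i,A_i)$ is an $(s_i,r_i)$-loom, and directly from the definition of a blow-up, $(D,C)=(B,A)[(B_1,A_1),\dots,(B_n,A_n)]$. Applying \refT{thm:blowuppfm} to this swapped blow-up --- now using the symmetric uniformity condition $r_{j_1}=\cdots=r_{j_q}$ on $B$-edges together with $\nu^*(B)=p$ and $\nu^*(B_i)=r_i$ --- yields $\nu^*(D)=c$, completing the verification of \refCon{conjmain1} for $(C,D)$.

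The ``in particular'' clause is then a direct specialization, because the remark preceding \refT{thm:blowupisaloom} identifies $\L_1\cplus_1\L_2$ with $\V_2[\L_1,\L_2]$: the $A$-side uniformity condition holds on the single $A$-edge $\{x_1,x_2\}$ of $\V_2$ since $\L_1$ and $\L_2$ share the second-coordinate uniformity $s$ by hypothesis, while the $B$-side condition is vacuous because the $B$-edges of $\V_2$ are singletons. I do not anticipate a real obstacle: the corollary is essentially bookkeeping that packages \refT{thm:blowuppfm}, its symmetric counterpart obtained by interchanging the two components of each loom, and the equivalence between \refCon{conjmain1} and the existence of perfect fractional matchings on both sides.
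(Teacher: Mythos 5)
Your proposal is correct and is exactly the argument the paper intends (the corollary is stated without proof as an immediate consequence of Theorem~\ref{thm:blowuppfm}): translate Conjecture~\ref{conjmain1} into perfect fractional matchings via LP duality and Lemma~\ref{lemma:loompfm}, apply Theorem~\ref{thm:blowuppfm} to get $\nu^*(C)=d$, and apply it to the swapped blow-up $(D,C)=(B,A)[(B_1,A_1),\dots,(B_n,A_n)]$ to get $\nu^*(D)=c$. The one point worth making explicit is that the swapped application needs the mirror uniformity hypothesis $r_{j_1}=\cdots=r_{j_q}$ for $\{j_1,\dots,j_q\}\in B$, which is not literally among ``the assumptions of Theorem~\ref{thm:blowuppfm}'' as stated (only the $A$-side condition $s_{i_1}=\cdots=s_{i_p}$ is); this is an imprecision in the corollary's phrasing rather than in your argument, and you correctly verify that the mirror condition holds (vacuously, since the $B$-edges of $\V_2$ are singletons) in the ``in particular'' case.
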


\section{Stars versus perfect matchings}

For a graph $G$ let $PM(G)$ be the set of perfect matchings in the graph $G$ and $ST(G)=\{ star_G(v) \mid v\in V(G)  \}$ - both hypergraphs having $E(G)$ as their ground set.   For an $s$-regular graph~$G$ on an even number $n$ of vertices, let $r=\frac{n}{2}$. Then $A=PM(G)$ and $B=ST(G)$ are $r$-uniform and $s$-uniform, respectively. They are orthogonal, and $A=C_r(B)$. Let  $\L(G)=(A,B)$. Note that despite the notation, this is not necessarily a loom, because possibly    $B\neq C_s(A)$. 
\begin{examples}\hfill \label{examples_pm_vs_stars}
    \begin{enumerate}
        \item For $G=K_n$ with even $n\ge 6$, the pair $(A,B)$ as above is a loom. (See~\refT{thm:Knloom} below.)
\item If $G=K_{n,n}$, then $\L(G)$ is isomorphic to $\L_{n,n}$ from ~\refE{ex:looms}~\eqref{eq:a33loom}.
\end{enumerate}

\end{examples}

\begin{theorem}\label{thm:Knloom}
For $n\ge 6$ even,     $\L(K_n)$ is an $(\frac{n}{2}, n-1)$-loom.  
\end{theorem}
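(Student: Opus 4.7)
Set $A=PM(K_n)$ and $B=ST(K_n)$. Items (1) and (2) of the loom definition (uniformities $n/2$ and $n-1$) are immediate. Orthogonality is also immediate: for any vertex $v$ and any perfect matching $M$, there is exactly one edge of $M$ through $v$, so $|M\cap star(v)|=1$. What remains is condition (3) (the two covering numbers) and condition (4) (both ``minimum cover'' identities).

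For the pair $(\tau(B),C_{n/2}(B))$: a set $F\subseteq E(K_n)$ covers $B$ iff it hits every star, iff every vertex is incident to some edge of $F$, i.e.\ iff $F$ is an edge cover of $K_n$. For $n$ even the minimum edge covers of $K_n$ are exactly the perfect matchings, of size $n/2$. Hence $\tau(B)=n/2$ and $C_{n/2}(B)=PM(K_n)=A$.

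For $\tau(A)\le n-1$: every star is a cover of $A$ by the orthogonality just noted. For $\tau(A)\ge n-1$, I would count. For any edge $e\in E(K_n)$, the number of perfect matchings containing $e$ is $(n-3)!!=(n-3)(n-5)\cdots 1$, and $|A|=(n-1)!!=(n-1)(n-3)!!$. Thus any set $F$ of $n-2$ edges covers at most
\[ \sum_{e\in F}|\{M\in A:e\in M\}|=(n-2)(n-3)!!<(n-1)(n-3)!!=|A| \]
perfect matchings, so $F$ cannot cover $A$. This gives $\tau(A)=n-1$.

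The main content is the identity $C_{n-1}(A)=B$. Let $F\subseteq E(K_n)$ be a cover of $A$ with $|F|=n-1$. Repeating the count, $|A|\le\sum_{e\in F}|\{M\in A:e\in M\}|=(n-1)(n-3)!!=|A|$, so equality holds and the sets $\{M\in A:e\in M\}$ for $e\in F$ must be pairwise disjoint. Two edges $e,f\in E(K_n)$ lie together in some perfect matching iff they are vertex-disjoint. So any two edges of $F$ must share a vertex. A classical observation (easily proved: pick two edges sharing a vertex $v$; if a third edge avoids $v$ then it forces a triangle, after which no further edge can be added while preserving pairwise intersection) says that any family of pairwise intersecting edges in a graph is either contained in a star or is a triangle. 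A triangle has $3$ edges, which would force $n-1=3$, i.e.\ $n=4$; but we assume $n\ge 6$. Therefore $F$ is contained in a star, and since $|F|=n-1=|star(v)|$ for every $v$, $F$ is a full star and lies in $B$. Combined with the trivial inclusion $B\subseteq C_{n-1}(A)$, this yields $C_{n-1}(A)=B$.

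The only delicate point is the equality case of the counting argument, which is exactly where the hypothesis $n\ge 6$ enters (to rule out the triangle and the $K_4$ anomaly in Example~\ref{examples_pm_vs_stars}).
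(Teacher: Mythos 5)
Your proof is correct, but it takes a genuinely different route from the paper's. The paper also reduces the problem to the two nontrivial facts $\tau(PM(K_n))=n-1$ and $C_{n-1}(PM(K_n))=ST(K_n)$ (dismissing orthogonality and $C_{n/2}(B)=A$ as routine, which you instead verify explicitly via minimum edge covers), but it establishes them by showing, via Tutte's theorem, that deleting any non-star set $\Gamma$ of $n-1$ edges from $K_n$ still leaves a perfect matching; this requires a case analysis over the Tutte set $S$ and some binomial-coefficient estimates on the number of edges destroyed by creating many odd components. Your argument replaces all of that with a union bound: each edge lies in exactly $(n-3)!!$ perfect matchings and $|PM(K_n)|=(n-1)!!=(n-1)(n-3)!!$, so $n-2$ edges cannot cover $A$, and a cover by $n-1$ edges forces equality, hence pairwise disjointness of the sets $\{M: e\in M\}$, hence a pairwise-intersecting edge family, which for $n\geq 6$ must be a full star by the classical star-or-triangle dichotomy. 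This is more elementary (no Tutte) and it isolates the equality case cleanly, with the hypothesis $n\geq 6$ entering only to exclude the triangle; the trade-off is that it leans on the exact enumeration of perfect matchings through an edge, so it is tied to $K_n$, whereas the paper's deficiency-style argument is the kind that adapts to showing $\tau(PM(G))$ is large for other dense graphs. Both proofs are complete and correct.
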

\begin{proof}
Let $A=PM(K_n)$ and $B=ST(K_n)$.
It is routine to check that $A$ and $B$ are orthogonal and $C_{\frac{n}{2}}(B)=A$. It remains to show that $\tau(A)=n-1$ and $C_{n-1}(A)=B$.
By Tutte's theorem  it suffices to show that after removing a non-star set $\Gamma$ of $n-1$ edges, there is no $S\subseteq V(G)$ such that the number $t$ of odd components of $G-\Gamma$ in $V\setminus S$ satisfies $t>|S|$. 

Suppose there exist such $\Gamma$ and $S$ violating this claim. Let $s=|S|$. The even-ness of $n$ and the inequality $t>s$ imply that
\[t\ge s+2.\]
Assume first that $S = \emptyset$. Then $t\ge 2$, and if one of the components of $G-\Gamma$ is of size~$p$, then $|\Gamma|\ge p(n-p)$. The assumption that $\Gamma$ is not a star implies that $p(n-p)>n-1=|\Gamma|$  (here we used the condition $n\ge 6$), a contradiction.

Assume next that $S \neq \emptyset$.
Let $1\le c_1\le \dots\le c_t$ be the sizes of the odd components of $G-\Gamma$ in $V\setminus S$. Since   $s+\sum_{i=1}^tc_i\le n$, we have $s+2\le t\le n-s$ and then
\[ s\le \frac{n-2}{2}. \]
The number of removed edges is at least $\binom{n-s}{2}-\sum_{i=1}^t\binom{c_i}{2}$. Note that
$\sum_{i=1}^tc_i\le n-s$ and $c_i\ge 1$, we have $\sum_{i=1}^t\binom{c_i}{2}\le \binom{n-s-(t-1)}{2}$ (which can easily be prove by induction on $t$) so that
\begin{align*}
    n-1&\ge \binom{n-s}{2}-\sum_{i=1}^t\binom{c_i}{2}\ge \binom{n-s}{2}-\binom{n-s-(t-1)}{2}\\
       &=\frac{1}{2}(t-1)(2n-2s-t).
\end{align*}

Note that $\frac{1}{2}(t-1)(2n-2s-t)$ attains minimum $-\frac{3}{2}s^2+(n-\frac{5}{2})s+n-1$ at $t=s+2$ for $s+2\le t\le n-s$. Therefore we should have 
\[   n-1\ge -\frac{3}{2}s^2+(n-\frac{5}{2})s+n-1. \]
Then the minimum of the right-hand side is
\[ \min\Big(2n-5,\quad \frac{(n-2)(n-4)}{8}+n-1\Big) \]
at $s=1$ or $\frac{n-2}{2}$. But it is impossible to have
\[ n-1\ge \min\Big(2n-5,\quad  \frac{(n-2)(n-4)}{8}+n-1\Big),  \]
assuming $n\ge 6$, which is a contradiction.
\end{proof}

If $G$ is an $s$-regular graph  on $n$ vertices and $ST(G)\subsetneqq C_s(PM(G))$, namely $\L(G)$ is not a loom, we may hope that the pair  
 $(PM(G), C_s(PM(G)))$ is a loom. For this to be true,  we need that  $C_s(PM(G)) \perp PM(G)$. This sometimes happens and sometimes not, as shown by the following two examples. 

\begin{examples}\hfill
\begin{enumerate}
   
    \item Let $P$ be the Petersen graph, $A=PM(P), B=C_3(PM(P))$. The hypergraph $B$ includes, besides the stars, five triples that are themselves perfect matchings. 
In the drawing of Petersen's graph, such a triple consists of two parallel edges, one on the inner pentagon and one on the outer, and an edge perpendicular to both, connecting the two pentagons. Then $(A,B)$ is a $(5,3)$-loom in which $B$ has a perfect matching, and $A$ does not (since the chromatic index of the Petersen graph is 4).  
\item 
Let $G$ be the $3$-regular graph obtained from Petersen's graph, by blowing up (in the graph theory sense. Not to confuse with the blow-ups defined in~\refS{subsec:genblowup}) every vertex by a triangle. The set of $3$ edges incident with every triangle (=blown-up vertex) is obviously a cover for $A$, and on the other hand, it is contained in a perfect matching of the blow-up graph.
    \end{enumerate}
\end{examples}
    
The following theorem states that any loom of the form $(PM(G), C_s(PM(G)))$ for some $s$-regular graph $G$ satisfies \refCon{conjmain1}. 
\begin{theorem}\label{thm:LGpfm}
    For any $s$-regular graph, if $\Big(PM(G), C_s(PM(G))\Big)$ is an $(r,s)$-loom, then both its components, $PM(G)$ and $C_s(PM(G))$, have perfect fractional matchings.
\end{theorem}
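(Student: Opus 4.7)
The plan is to handle the two components of the loom separately. By \refL{lemma:loompfm}, each half of the statement reduces to showing that $\nu^*$ of one component equals the uniformity of the other.

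For $A = PM(G)$ the target is $\nu^*(A) = s$. Since $\tau(A) = s$, LP duality already gives $\nu^*(A) = \tau^*(A) \le s$, so only the inequality $\nu^*(A) \ge s$ needs proof. The key step is to show that the constant vector $z \equiv \frac{1}{s}$ on $E(G)$ lies in the perfect matching polytope of $G$. Granting this, write $z = \sum_{M \in PM(G)} \beta_M \chi_M$ with $\beta$ a probability distribution on $PM(G)$ and set $f(M) = s\beta_M$; then $|f| = s$ and $\sum_{M \ni e} f(M) = s \cdot z(e) = 1$ for every $e \in E(G)$, yielding the desired perfect fractional matching of $A$. To locate $z$ in the perfect matching polytope I invoke Edmonds' characterization: the degree constraints $\sum_{e \ni v} z(e) = 1$ hold by $s$-regularity, and the odd-cut constraints $|\delta(U)| \ge s$ for every odd $U \subseteq V(G)$ follow from the loom hypothesis, because every perfect matching of $G$ meets $\delta(U)$ in an odd, hence nonzero, number of edges, so $\delta(U)$ is a cover of $PM(G)$ and $|\delta(U)| \ge \tau(PM(G)) = s$.

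For $B = C_s(PM(G))$ the construction is elementary. For every $v \in V(G)$ the star $star_G(v)$ has $s$ edges (by regularity) and meets every perfect matching, so $star_G(v) \in B$. Define $f : B \to \mathbb{R}_{\ge 0}$ by contributing weight $\frac{1}{2}$ to $star_G(v)$ for each $v \in V(G)$ (summing the contributions if distinct vertices happen to determine the same star), and weight $0$ to every non-star member of $B$. Then $|f| = \frac{n}{2} = r$, and for every $e = uv \in E(G)$ the only positively weighted members of $B$ containing $e$ are $star_G(u)$ and $star_G(v)$, so $\sum_{S \ni e} f(S) = \frac{1}{2} + \frac{1}{2} = 1$, giving a perfect fractional matching of $B$.

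The crux is the first part: recognizing that the loom identity $\tau(PM(G)) = s$ is exactly what certifies Edmonds' odd-cut inequalities for the uniform vector $z \equiv 1/s$. Once that connection is in hand, both halves of the theorem are short.
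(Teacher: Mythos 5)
Your proof is correct, and the star half (weight $\tfrac12$ on each $star_G(v)$, total $\tfrac n2=r$, each edge of $G$ covered exactly $\tfrac12+\tfrac12=1$) is exactly the paper's argument. For the $PM(G)$ half you take a genuinely different, though closely parallel, route. The paper proves a slightly more general statement (\refT{thm:pmvsstar}: $\tau(PM(G))\ge s-1$ already suffices) by computing the fractional edge-chromatic number: it bounds $t(U)=\frac{2|E(G[U])|}{|U|-1}\le s$ for odd $U$, invokes the formula $\chi_e^*(G)=\max(\Delta(G),t(G))$ from Scheinerman--Ullman, and then needs an extra counting step to see that an optimal fractional edge-colouring of total weight $s$ is necessarily supported on perfect matchings. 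You instead place the uniform vector $z\equiv\tfrac1s$ directly in Edmonds' perfect matching polytope, which skips that last step and immediately yields the convex combination you rescale into a fractional matching of $PM(G)$. The combinatorial core is identical in both arguments --- the inequality $|E(G[U])|\le\frac{s(|U|-1)}{2}$ is literally equivalent to your odd-cut bound $|\delta(U)|\ge s$, and both ultimately rest on Edmonds' theorem --- but since the loom hypothesis hands you $\tau(PM(G))=s$ exactly, you can certify the odd-cut constraints outright, whereas the paper's weaker hypothesis $\tau\ge s-1$ forces it through the parity observation that $|\delta(U)|\equiv s\pmod 2$. What you lose is only the marginally stronger standalone statement \refT{thm:pmvsstar}; what you gain is a cleaner derivation that avoids the detour through fractional edge colourings.
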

 $ST(G)$ has a perfect fractional matching, obtained by putting weight $\frac{1}{2}$ on every star. Since $C_s(PM(G)) \supseteq ST(G)$, this is also a perfect fractional matching of~$C_s(PM(G))$.  
 
 The existence of a perfect fractional matching of $PM(G)$ is given by the following theorem. 
\begin{theorem}\label{thm:pmvsstar}
  Let $G$ be an $s$-regular graph on $n$ vertices and let $A=PM(G)$. If $\tau(A) \ge s-1$ then  $\nu^*(A)=s$.  
\end{theorem}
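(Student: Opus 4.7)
The plan is to use Edmonds' characterization of the perfect matching polytope. The upper bound $\nu^*(A) \le s$ is easy: for any fractional matching $f$ on $A$ and any fixed vertex $v$ of $G$, summing the edge-capacity constraint over the $s$ edges incident to $v$ and using that every perfect matching contains exactly one such edge gives $|f| = \sum_M f(M) = \sum_{e\ni v}\sum_{M\ni e}f(M) \le s$.

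For the lower bound, I would argue that the uniform vector $x \in \mathbb{R}^{E(G)}$ defined by $x_e = 1/s$ lies in the perfect matching polytope of $G$. Edmonds' theorem then gives a representation $x = \sum_M \lambda_M \chi_M$ as a convex combination of characteristic vectors of perfect matchings, and setting $f(M) := s\lambda_M$ yields a fractional matching of $A$ of total weight $s$, as each edge $e \in E(G)$ satisfies $\sum_{M\ni e}f(M) = s x_e = 1$.

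The content of the argument is therefore in verifying Edmonds' conditions. The equality $x(\delta(v)) = 1$ is immediate from $s$-regularity. The nontrivial conditions are $|\delta(S)| \ge s$ for every odd $S \subsetneq V(G)$, and this is where the hypothesis $\tau(A)\ge s-1$ enters. For any such $S$, every perfect matching of $G$ meets $\delta(S)$ in an odd (in particular positive) number of edges, so $\delta(S)$ is a cover of $A$; hence $|\delta(S)| \ge \tau(A)\ge s-1$. To upgrade this to $|\delta(S)| \ge s$, I would invoke a parity argument: the identity $|\delta(S)| = s|S| - 2|E(G[S])|$ shows that $|\delta(S)| \equiv s|S| \equiv s \pmod 2$ since $|S|$ is odd, whereas $s-1$ has the opposite parity to $s$, ruling out $|\delta(S)| = s-1$.

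I do not anticipate a real obstacle: the one subtlety is the parity jump from $s-1$ to $s$, which is precisely why the hypothesis $\tau(A) \ge s-1$ suffices even though naively one might expect $\tau(A) \ge s$ to be needed. The rest is essentially a direct application of Edmonds' polytope theorem.
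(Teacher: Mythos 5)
Your proof is correct and is essentially the paper's argument: the heart of both is the identical parity step showing that every odd cut $\delta(S)$, being a cover of $A$ of size $s|S|-2|E(G[S])|\equiv s \pmod 2$, must contain at least $s$ edges once $\tau(A)\ge s-1$. The only cosmetic difference is that you invoke Edmonds' perfect matching polytope theorem directly on the vector $\bigl(\tfrac{1}{s},\dots,\tfrac{1}{s}\bigr)$, while the paper routes through the formula $\chi_e^*(G)=\max(\Delta(G),t(G))$ (itself a consequence of Edmonds' theorem) and must additionally check that an optimal fractional edge colouring of weight $s$ is supported on perfect matchings --- a step your version conveniently avoids.
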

  The fractional edge-chromatic number $\chi_e^*(G)$ (sometimes denoted by $\chi_f'(G)$) of a graph $G$ is  the minimum total weight of a fractional edge-coloring of $G$, i.e., $\min \sum_{M\in\mathcal{M}(G)}f(M)$ over all non-negative real functions $f$ on $\M(G)$ satisfying
  \[\sum_{M\in\mathcal{M}(G):e\in M}f(M)\ge 1\] 
  for every $e\in E(G)$.

Given  $U \subseteq V(G)$, let $t(U) = \frac{2|E(G[U])|}{|U|-1}$. Let 
\[t(G)= \max_{U\subseteq V(G):\text{$|U|$ is odd}} t(U).\]

We use the following result~\cite[Theorem 4.2.1]{scheinerman2011fractional}.

\begin{lemma}\label{lemma:edgefractional}
    \[\chi_e^*(G) = max(\Delta(G), t(G)).\]
\end{lemma}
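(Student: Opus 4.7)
The plan is to split the identity into the two bounds $\chi_e^*(G)\geq\max(\Delta(G),t(G))$ and $\chi_e^*(G)\leq\max(\Delta(G),t(G))$, proving the first by direct LP double counting and the second by invoking Edmonds' matching polytope theorem.

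For the lower bound, I would fix an arbitrary fractional edge coloring $f:\M(G)\to\R_{\geq 0}$ and derive $|f|\geq\Delta(G)$ and $|f|\geq t(G)$ separately. Choosing a vertex $v$ of maximum degree and summing the defining inequality $\sum_{M\ni e}f(M)\geq 1$ over the $\Delta(G)$ edges incident to $v$, then swapping the order of summation and using that any matching contains at most one edge at $v$, gives $\Delta(G)\leq\sum_M f(M)\,|\{e\in M:v\in e\}|\leq|f|$. For the odd-set bound, I fix an odd $U\subseteq V(G)$ and sum the covering inequality over $e\in E(G[U])$; since any matching $M$ meets $G[U]$ in at most $(|U|-1)/2$ edges, this yields $|E(G[U])|\leq\tfrac{|U|-1}{2}|f|$, and rearranging gives $|f|\geq t(U)$. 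Taking the maximum over odd $U$ gives $|f|\geq t(G)$.

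For the upper bound I would set $t=\max(\Delta(G),t(G))$ and produce an explicit fractional edge coloring of total weight $t$. Edmonds' matching polytope theorem identifies the convex hull of characteristic vectors of matchings of $G$ as the polytope of $y\in\R_{\geq 0}^{E(G)}$ satisfying $\sum_{e\ni v}y(e)\leq 1$ for every vertex $v$ and $\sum_{e\in E(G[U])}y(e)\leq(|U|-1)/2$ for every odd $U\subseteq V(G)$. The constant vector $\tfrac{1}{t}\chi_{E(G)}$ satisfies the vertex inequalities because $t\geq\Delta(G)$ and the odd-set inequalities because $t\geq t(U)$ for every odd $U$, so it lies in the matching polytope. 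Expressing it as $\sum_M\lambda_M\chi_M$ with $\lambda_M\geq 0$, $\sum_M\lambda_M=1$, and setting $f(M)=t\lambda_M$ gives a fractional edge coloring with $\sum_{M\ni e}f(M)=1$ for every edge and $|f|=t$, establishing the upper bound.

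The only substantial ingredient is Edmonds' matching polytope theorem, which I would treat as a black box. Everything else is elementary double counting and the translation between a convex combination of matching indicator vectors and a fractional edge coloring. The main obstacle is therefore not a new idea but the dependence on this deep polyhedral description, which is precisely why the paper cites the identity verbatim from Scheinerman--Ullman rather than reproving it.
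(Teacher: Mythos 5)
The paper does not prove this lemma; it is quoted verbatim from Scheinerman--Ullman \cite[Theorem 4.2.1]{scheinerman2011fractional}, so there is no internal proof to compare against. Your argument is correct and is essentially the standard derivation (the same one used in the cited reference): the lower bound by summing the covering inequalities over the star at a maximum-degree vertex and over $E(G[U])$ for odd $U$, using that a matching meets a star in at most one edge and $G[U]$ in at most $(|U|-1)/2$ edges; and the upper bound by checking that $\tfrac{1}{t}\chi_{E(G)}$ with $t=\max(\Delta(G),t(G))$ satisfies the degree and odd-set constraints of Edmonds' matching polytope, then rescaling the resulting convex combination of matching indicators into a fractional edge coloring of weight $t$ that covers every edge with total weight exactly $1$. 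The entire depth of the statement sits in Edmonds' matching polytope theorem, which you correctly isolate as the one non-elementary ingredient; everything else in your write-up is sound.
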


\begin{proof}[Proof of~\refT{thm:pmvsstar}]
To prove $\nu^*(A)=s$, we claim that it is enough to show that $\chi_e^*(G)=s$.
Indeed,  let $f: \mathcal{M}(G) \to [0,1]$ be a fractional edge coloring of $G$ with $\sum_{M\in \mathcal{M}(G)}f(M)=s$. We may assume that $\sum_{M\in\mathcal{M(G)}:e\in M}f(M)=1$ for any $e\in E(G)$ as $\mathcal{M}(G)$ is a simplicial complex.
Then 
\begin{align*}
   |V(G)|s &= \sum_{v\in V(G)}\sum_{e\in E(G):v\in e}\sum_{M\in\mathcal{M}:e\in M}f(M)\\
    &=\sum_{M\in\mathcal{M}}\sum_{e:e\in M}\sum_{v:v\in e}f(M)=\sum_{M\in\mathcal{M}}2|M|f(M)
\end{align*}
Note that $|M|\le |V(G)|/2$, and $\sum_{M\in \mathcal{M}}f(M)=s$. This implies that $M$ is a perfect matching of $G$ whenever $f(M)>0$. Therefore this $f$ forms a fractional matching of $A$ of total weight~$s$.

To prove $\chi_e^*(G)=s$, note that for any $U \subseteq V(G)$ of odd size, $\emptyset\subsetneqq U\subsetneqq V(G)$ as $|V(G)|$ is even, therefore the cut $E(U,V(G)\setminus U)$ is a cover of $A$. Since $|E(U,V(G)\setminus U)|=s|U|-2|E(G[U])|$ has different parity from $s-1$ and since $\tau(A)\ge s-1$,  we have $|E(U,V(G)\setminus U)| \ge s$. Then $|E(G[U])|\le \frac{s|U|-s}{2}$, implying 
\[t(U) = \frac{2|E(G[U])|}{|U|-1} \le \frac{2\frac{s(|U| -1)}{2}}{|U|-1} =s.\] 
Since~$\Delta(G)=s$,
then \refL{lemma:edgefractional} implies that $\chi_e^*(G)=\max(\Delta(G), t(G))=s$. 
\end{proof}

This concludes the proof of~\refT{thm:pmvsstar}, and hence also of~\refT{thm:LGpfm}.

So far, in all our examples at least one component of the looms has a perfect (integral) matching. 
    In the following example, neither component of the loom has a perfect matching.   

\begin{example}
      Let $\L_1=(A_1,B_1)=(ST(K_{10}),PM(K_{10}))$, which is a $(9,5)$-loom by~\refT{thm:Knloom} and $\nu(A_1)=1<5$. Let $\L_2=(A_2,B_2)=(PM(K_6),ST(K_6))$, which is a $(3,5)$-loom by~\refT{thm:Knloom} and $\nu(B_2)=1<3$. Then $\L=(A,B)=\L_1\cplus_1\L_2$ is a $(12,5)$-loom with no perfect matching in either component by~\refT{thm:pmiffpmineach}. 
\end{example}

\section{$(r,2)$-looms }

     An $(r,1)$-loom is just $\V_r$, from \refE{ex:looms} (2), meaning that if it is non-trivial then it is decomposable. As we shall see, this is not true for $(r,s)$-looms when $r,s\ge 3$, but it is true for $s=2$.

\begin{theorem}\label{char_r2_looms}
   Any $(r,2)$-loom $\L=(A,B)$ is decomposable.  
\end{theorem}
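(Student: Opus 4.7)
The plan is to exploit the fact that $B$ is a graph (2-uniform) together with the orthogonality condition, to show that $B$ must be bipartite, and then to use the uniqueness of bipartitions of connected bipartite graphs to force one of $A, B$ to be disconnected. The conclusion will then follow from \refL{lemma:decomposablenotconnected}.

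First I would show that \emph{$B$ is bipartite}. Fix any edge $a \in A$; such an edge exists because $A = C_r(B)$ and $B$, having $\tau(B)=r \ge 1$, admits minimum covers of size $r$. For every $b \in B$ the orthogonality condition gives $|a \cap b|=1$, i.e., exactly one endpoint of the edge $b$ lies in $a$. In particular no edge of $B$ lies entirely inside $a$, nor entirely inside $V \setminus a$. Thus both $a$ and $V \setminus a$ are independent sets of $B$, and $(a, V \setminus a)$ is a bipartition of $B$.

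Next I would split into two cases. If $B$ is disconnected, then by \refL{lemma:decomposablenotconnected} the loom $\L$ is already decomposable, and we are done. Otherwise $B$ is connected, so its bipartition is unique up to swapping the sides; call it $(X, Y)$. By the first step, every $a \in A$ has $\{a, V \setminus a\} = \{X, Y\}$, so $A \subseteq \{X, Y\}$. Since $V(A) = V(B) = X \cup Y$ (the first equality is the first lemma of the paper, the second holds because $B$ is a connected graph with at least one edge), the possibilities $A = \{X\}$ or $A = \{Y\}$ are excluded: either would give $V(A) \subsetneq V(B)$. Hence $A = \{X, Y\}$.

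Finally, $X$ and $Y$ are disjoint (being the two sides of a bipartition), so $A = \{X, Y\}$ consists of two vertex-disjoint edges and is therefore disconnected. Applying \refL{lemma:decomposablenotconnected} once more, $\L$ is decomposable. There is no real obstacle to overcome in this argument; the key observation — that orthogonality with every edge of the graph $B$ forces each edge of $A$ to be a side of a $2$-coloring of $B$ — essentially does all the work, and uniqueness of the bipartition for connected bipartite graphs finishes the proof.
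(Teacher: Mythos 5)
Your proof is correct. It follows the same overall strategy as the paper --- show that $B$ is bipartite, deduce that when $B$ is connected the edges of $A$ are exactly the two sides of the bipartition, and invoke \refL{lemma:decomposablenotconnected} --- but your execution is leaner. The paper proves the stronger structural statement (\refL{lemma:compbipartite}) that every connected component of $B$ is a complete bipartite graph with two sides of equal size, using an odd-cycle argument for bipartiteness and a separate neighborhood argument for completeness, and then reads off $A=C_r(B)$ as the two sides. You instead observe directly that orthogonality forces each edge $a\in A$ to be a color class of $B$ (exactly one endpoint of every $B$-edge lies in $a$), so that the uniqueness of the $2$-coloring of a connected bipartite graph gives $A\subseteq\{X,Y\}$, and $V(A)=V(B)$ then forces $A=\{X,Y\}$, which is disconnected. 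This bypasses the completeness and equal-sides claims entirely. What you give up is the extra structure: \refL{lemma:compbipartite} is what powers the full characterization of $(r,2)$-looms in \refT{thm:chars2}, whereas your argument yields only the decomposability statement itself (though in the connected case the equal sides do follow from your conclusion, since $A$ is $r$-uniform).
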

By \refL{lemma:decomposablenotconnected} it suffices to show that at least one of $A, B$ is disconnected. This will follow from:  

\begin{lemma}\label{lemma:compbipartite}
   In an $(r,2)$-loom $\L=(A,B)$,  every connected component $D$ of $B$ is a complete bipartite graph with two sides of the same size. 
\end{lemma}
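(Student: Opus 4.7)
The plan is to pin down the structure of $D$ using the equivalence relation $x \sim y \iff star_A(x) = star_A(y)$. Fix any edge $\{u_0, v_0\} \in D$ and write $A_x := star_A(x)$ for brevity. Orthogonality forces $|a \cap \{u_0, v_0\}| = 1$ for every $a \in A$, so $A = A_{u_0} \sqcup A_{v_0}$, and both parts are nonempty because otherwise one of $u_0, v_0$ would single-handedly cover $A$, contradicting $\tau(A) = 2$. The key observation driving the whole proof is that whenever $\{u, v\}, \{u, w\} \in B$ with $v \neq w$, the analogous disjoint partitions give $A_v = A \setminus A_u = A_w$. Propagating this inductively along shortest paths in $D$ from $u_0$, every vertex of $V(D)$ at even distance from $u_0$ inherits the star $A_{u_0}$, and every vertex at odd distance inherits the star $A_{v_0}$.

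Next, I would verify that this parity assignment gives a proper $2$-colouring of $D$: if any edge $\{x, y\} \in D$ had $A_x = A_y$, then $A = A_x \sqcup A_y$ would force $A = \emptyset$, again contradicting $\tau(A) = 2$. This partitions $V(D) = R \sqcup B'$, where $R = \{x \in V(D) : A_x = A_{u_0}\}$ and $B' = \{x \in V(D) : A_x = A_{v_0}\}$, and every edge of $D$ crosses between them. For complete bipartiteness, take any $u \in R$ and $v \in B'$: then $A_u \cup A_v = A_{u_0} \cup A_{v_0} = A$, so $\{u, v\}$ is a cover of $A$ of the minimum size $\tau(A) = 2$, hence $\{u, v\} \in C_2(A) = B$; since both endpoints lie in $V(D)$, this edge lies in $D$.

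To finish, I would show $|R| = |B'|$. Pick $e \in A_{u_0}$ and $f \in A_{v_0}$. Each $x \in R$ satisfies $e \in A_x = A_{u_0}$, so $x \in e$; no $x \in B'$ lies in $e$ because $A_x = A_{v_0}$ is disjoint from $A_{u_0}$ and so does not contain $e$. Hence $e \cap V(D) = R$, and symmetrically $f \cap V(D) = B'$. Now $R \cup (f \setminus V(D))$ is a cover of $B$: it covers $D$ because $R$ is one whole side of the complete bipartite graph $D$, and it covers every $b \in B \setminus D$ because such $b$ is disjoint from $V(D)$ (as $D$ is a connected component of the graph $B$) and $f$ already meets $b$. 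Since $\tau(B) = r$ and $|f \setminus V(D)| = r - |B'|$, this yields $|R| + r - |B'| \ge r$; the symmetric construction gives $|B'| + r - |R| \ge r$, so $|R| = |B'|$. The main obstacle I anticipate is this final counting step: it rests on the (easy but crucial) observation that a $B$-edge outside the component $D$ is entirely disjoint from $V(D)$, which is what lets one splice the ``$R$-part'' of $e$ together with the ``outside-$V(D)$ part'' of $f$ into a legitimate cover of $B$.
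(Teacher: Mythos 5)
Your proof is correct, and it takes a noticeably different route from the paper's. The paper proves bipartiteness and completeness separately --- bipartiteness by noting that an $A$-edge, being a cover of $B$, would have to contain two adjacent vertices of any odd cycle of $B$, violating orthogonality; completeness by showing that $N_B(x)\cap N_B(y)\neq\emptyset$ forces $N_B(x)=N_B(y)$ --- whereas you derive both simultaneously from the partition $A=star_A(u_0)\sqcup star_A(v_0)$ induced by a single $B$-edge and propagated through the component. The engine behind completeness is the same in both arguments (a $2$-element cover of $A$ must lie in $C_2(A)=B$), but your star-partition framing is more unified and yields as a by-product that each $A$-edge meets $V(D)$ in exactly one full side of $D$, which is essentially the structural content of \refT{thm:chars2}. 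For the equal-sides claim the two arguments genuinely diverge: the paper shows that an $A$-edge meeting the larger side of some component must contain a full side of every component, giving $|a|>r$ against the $r$-uniformity of $A$; you instead splice one side of $D$ with the part of an $A$-edge lying outside $V(D)$ to build a cover of $B$, and play the resulting inequality against $\tau(B)=r$ in both directions. Both counting arguments are sound and of comparable length; yours leans on $\tau(B)=r$ where the paper leans on $|a|=r$.
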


This already proves the theorem: in an $(r,2)$-loom $\L=(A,B)$, if $B$ is connected, then $A$, which consists of the two sides of the complete bipartite graph, is disconnected.

\begin{proof}[Proof of \refL{lemma:compbipartite}] 
As to bipartiteness, suppose, for contradiction, that $B$ contains an odd cycle $O$. Then any edge of $A$, being a cover of $B$, must contain both endpoints of some edge of $O$, contradicting the fact that $A \perp B$. 
To prove that $D$ is complete, it suffices to show that if $N_B(x) \cap N_B(y) \neq \emptyset$ then $N_B(x) = N_B(y)$. So, assume for negation that $u \in N_B(x) \cap N_B(y)$ and $v \in N_B(x) \setminus N_B(y)$. Then any $a \in A$ not containing $y$ must contain $u$, hence it avoids $x$, and to cover $xv$ it must contain $v$. Thus $yv$  is a cover of $A$, and being minimum it must belong to $B$, as desired. 
Therefore each connected component of $B$ is a complete bipartite graph.

Let $D_i,~1\le i \le m$ be the connected components of $B$, and let $X_{i,j},~~j =1,2$ be the sides of $D_i$, where $|X_{i,1}| \le |X_{i,2}|$. Then $\tau(B)= \sum_{1\le i \le m}|X_{i,1}|$. Remember that $\tau(B)=r$.

We claim that
\[|X_{i,2}|= |X_{i,1}|\] for all $1\le i \le  m$.
To prove the claim, assume, for contradiction, that 
$|X_{i,2}|> |X_{i,1}|$ for some $i$. Let $a$ be an edge of $A$ meeting 
    $X_{i,2}$. By the orthogonality condition $a \cap X_{i,1}=\emptyset$, and since it is a cover of $B$, it contains   $X_{i,2}$. For the same reason $a$ contains a side from each $D_i$, so $|a|> r$, contradicting the fact that $A$ is $r$-uniform.
This  completes the proof of the lemma.
\end{proof}

The above discussion provides a characterization of all $(r,2)$-looms:
 
\begin{theorem}\label{thm:chars2}
For an $(r,2)$-loom $(A,B)$, there exist pairs of sets $(X_{i,1},X_{i,2})_{1\le i \le t}$ satisfying:
   \begin{itemize}
       \item All $X_{i,j}$ are pairwise disjoint, $|X_{i,1}|=|X_{i,2}|=:q_i$,
       \item $\sum_{1\le i\le t}q_i=r$, 
        \item  $A=\{\bigcup X_{i,\sigma(i)} \mid \sigma \in [2]^{[t]}\}$ and $B=\bigcup_{1\le i \le t}\{uv\mid u\in X_{i,1}, v\in X_{i,2} \}$.     
   \end{itemize}
\end{theorem}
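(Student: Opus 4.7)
The plan is to package together the structural information already extracted in Lemma~\ref{lemma:compbipartite} with a short deduction of what $A$ must look like, using only the condition $A=C_r(B)$ together with orthogonality.

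First I would apply Lemma~\ref{lemma:compbipartite}: label the connected components of $B$ as $D_1,\ldots,D_t$, and for each $i$ let $X_{i,1},X_{i,2}$ be the two sides of the complete bipartite graph $D_i$, with $|X_{i,1}|=|X_{i,2}|=:q_i$. The pairwise disjointness of the $X_{i,j}$'s is automatic, since distinct components of $B$ are vertex-disjoint. The smaller (here: either) side of each $D_i$ gives a cover of $B$ of total size $\sum_i q_i$, and since a cover of $B$ must hit every edge of each complete bipartite piece, this is optimal. Thus $\tau(B)=\sum_i q_i$, and the defining condition $\tau(B)=r$ yields $\sum_{i=1}^t q_i=r$.

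The description of $B$ is then tautological: $B$ is precisely the union of the edge sets of the $D_i$'s, i.e.\ $B=\bigcup_{i=1}^t\{uv:u\in X_{i,1},\,v\in X_{i,2}\}$. For the description of $A$, I would argue as follows. By the earlier lemma $V(A)=V(B)=\bigsqcup_i(X_{i,1}\cup X_{i,2})$, so every $a\in A$ splits as $a=\bigsqcup_i (a\cap V(D_i))$. Since $a$ is a cover of $B$, its intersection with each $D_i$ must cover all $q_i^2$ edges of $D_i$, forcing $|a\cap V(D_i)|\ge q_i$. Summing and using $|a|=r=\sum_i q_i$ gives equality $|a\cap V(D_i)|=q_i$ for every $i$. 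Now orthogonality $A\perp B$ forces $a\cap V(D_i)$ to be an independent set in $D_i$ (otherwise $a$ would contain both endpoints of some edge $b\in B$, making $|a\cap b|\ge 2$). An independent set of size $q_i$ in the complete bipartite graph $D_i$ with parts of size $q_i$ is exactly one of the two sides, so $a\cap V(D_i)=X_{i,\sigma(i)}$ for some $\sigma(i)\in\{1,2\}$. Hence $a=\bigcup_i X_{i,\sigma(i)}$ for some $\sigma\in[2]^{[t]}$. Conversely, every such union has size $r$, covers $B$, and (being a union of sides) is orthogonal to every edge of $B$, so it lies in $C_r(B)=A$.

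Because Lemma~\ref{lemma:compbipartite} has already done the serious combinatorial work, there is no real obstacle here; the only point requiring care is the step where orthogonality is upgraded to ``$a\cap V(D_i)$ is an independent set,'' which is what pins down $a\cap V(D_i)$ as a full side rather than some arbitrary $q_i$-subset of $V(D_i)$.
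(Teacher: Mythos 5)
Your proposal is correct and follows essentially the same route as the paper: the paper derives Theorem~\ref{thm:chars2} from Lemma~\ref{lemma:compbipartite} via exactly the observation you make, namely that an edge of $A$, being a cover of $B$ orthogonal to $B$, must contain one full side of each complete bipartite component and avoid the other (the paper phrases this inside the proof that the two sides have equal size, rather than via your counting step $|a\cap V(D_i)|=q_i$ followed by independence, but the content is the same). Your explicit converse inclusion, that every union of sides lies in $C_r(B)=A$, is a detail the paper leaves to ``the above discussion,'' and your write-up supplies it cleanly.
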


Here $[2]^{[t]}$ is the  set of functions from $[t]$ to $[2]$.

Let    
        \begin{align*}
            \L_i &=\Big(\{X_{i,1},X_{i,2}\}, \{uv\mid u\in X_{i,1}, v\in X_{i,2} \}\Big)\\
                 &=\Big(\{X_{i,1}\},\big\{\{x\}\big\}_{x\in X_{i,1}}\Big)\cplus_2\Big(\{X_{i,2}\}, \big\{\{y\}\big\}_{y\in X_{i,2}}\Big)\\
                 &=\V_{q_i}\cplus_2 \V_{q_i}.
        \end{align*}

                 Then $\L=\L_1\cplus_1\cdots \cplus_1 \L_t=(\V_{q_1}\cplus_2\V_{q_1})\cplus_1\cdots \cplus_1 (\V_{q_t}\cplus_2\V_{q_t})$.

    \refT{thm:chars2} implies that $\nu(A)=2$ and $\nu(B)=r$, thus $\nu^*(A)=2$ and $\nu^*(B)=r$. Hence~\refCon{conjmain1} is true when $s=2$.

\section{$(3,3)$-looms}

\begin{theorem}\label{thm:33loom}
    Let  $\L=(A,B)$ be a $(3,3)$-loom on the vertex set $V$. 
    Then 
    \begin{enumerate}
        \item\label{eq:V=9} $|V|=9$, 
        \item\label{eq:nu=3} $\nu(A)=\nu(B)=3$, 
        \item\label{eq:Vminuesef} For any pair $e,f$ of disjoint edges in $A$, $V\setminus (e\cup f)\in A$.
    \end{enumerate}
    \end{theorem}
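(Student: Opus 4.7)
All three items are tied together via \refL{lemma:loompm}: a matching of size $3$ in $A$ is automatically perfect, so establishing $\nu(A)=3$ gives (1) and the $A$-half of (2); the $B$-half then follows symmetrically, and (3) follows from (1) because $V\setminus(e\cup f)$ has size $|V|-6=3$ and, by orthogonality, meets each $b\in B$ in exactly one vertex (as $|b\cap e|=|b\cap f|=1$ exhausts two of $b$'s three vertices)---hence it is a $3$-cover of $B$ and so lies in $C_3(B)=A$. My plan therefore is to prove $|V|=9$.

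Choose disjoint $e,f\in A$ and disjoint $b_1,b_2\in B$ (which exist by \refL{lemma:nu2}), and write $g=V\setminus(e\cup f)$, $h=V\setminus(b_1\cup b_2)$. The orthogonality observation above shows that $g$ covers $B$ and $h$ covers $A$, so $|g|=|h|=|V|-6\geq 3$ and $|V|\geq 9$. Suppose for contradiction $|V|\geq 10$, i.e., $|g|\geq 4$. Since each $w\in g$ is the unique ``$g$-vertex'' of some edge of $B$, any cover of $B$ contained in $g$ must equal $g$; so $A$ has no edge inside $g$, and symmetrically $B$ has none inside $h$. Together with \refL{lemma:loompm}, this forces $\nu(A)=\nu(B)=2$ (a third disjoint edge would give $|V|=9$, contradicting $|g|\geq 4$).

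Label so that $b_1=\{e_1,f_1,w_1\}$ and $b_2=\{e_2,f_2,w_2\}$ with $w_1,w_2\in g$; then $h=\{e_3,f_3\}\cup(g\setminus\{w_1,w_2\})$. Orthogonality now forces every $a\in A$ to have exactly one vertex in each of the pairwise disjoint sets $b_1,b_2,h$ (one in $b_i$ by $|a\cap b_i|=1$, and the remaining vertex necessarily in $V\setminus(b_1\cup b_2)=h$), so $|A|\leq 9|g|$ and dually $|B|\leq 9|g|$. The crucial observation is that for any $w\in g\setminus\{w_1,w_2\}$, the edge $b_w\in B$ through $w$ cannot be $\{e_3,f_3,w\}$, else $\{b_1,b_2,b_w\}$ would be a $3$-matching in $B$; a parallel restriction holds for $A$-edges through vertices of $h\setminus\{e_3,f_3\}$.

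The remaining work, which is the main obstacle, is a careful combinatorial case analysis that propagates these restrictions. As $w$ ranges over $g\setminus\{w_1,w_2\}$---and dually as a vertex $v$ ranges over $h\setminus\{e_3,f_3\}$---orthogonality, the ``not $(3,3)$'' restriction, and the size bounds $|A|,|B|\leq 9|g|$ conspire to either produce a third pairwise disjoint edge in $A$ or in $B$ (contradicting $\nu(A)=\nu(B)=2$) or to leave some vertex of $V$ covered by no edge of $A$ or $B$ (contradicting $V(A)=V(B)=V$). Either way, the assumption $|g|\geq 4$ is untenable, giving $|V|=9$ and thereby all three parts of the theorem.
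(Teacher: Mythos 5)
Your reduction of the theorem to the single claim $|V|=9$ is sound and matches the paper's own Claim on the equivalence of the three parts, and your lower bound $|V|\geq 9$ (via a disjoint pair $e,f\in A$ from \refL{lemma:nu2} and the observation that $V\setminus(e\cup f)$ is a cover of $B$) is exactly the paper's argument. The preliminary consequences you draw from the assumption $|V|\geq 10$ are also correct: $A$ has no edge inside $g=V\setminus(e\cup f)$, hence $\nu(A)=\nu(B)=2$, every $a\in A$ meets each of $b_1,b_2,h$ exactly once, and the edge of $B$ through $w\in g\setminus\{w_1,w_2\}$ cannot be $\{e_3,f_3,w\}$.

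However, the proof stops precisely where the real difficulty begins. The final paragraph --- ``a careful combinatorial case analysis \dots conspire to either produce a third pairwise disjoint edge \dots or to leave some vertex \dots covered by no edge'' --- is a statement of intent, not an argument: no mechanism is exhibited that actually produces the third disjoint edge or the uncovered vertex, and it is not at all obvious that the listed restrictions suffice. This missing step is the substance of the theorem. For comparison, the paper gets through this bottleneck by a different device: it sets $k=\max_{e_1,e_2,e_3\in A}|e_1\cup e_2\cup e_3|$, shows $k\geq 7$, and then eliminates $k=8$ and $k=7$ using \refL{lemma:star=star} (from $star_B(y)\subseteq star_B(x)$ one deduces equality of stars, whence a ``swapped'' set $(e_3\setminus\{x\})\cup\{y\}$ is again a cover of $B$ and hence an edge of $A$, enlarging the union); nothing playing this role appears in your proposal. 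Until you either carry out your case analysis in full or import an idea like the star-swapping argument, the proof is incomplete at its central point.
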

\begin{proof} 
\begin{claim}\label{claim:equivalence}
    The three conditions are equivalent. 
    \end{claim}
\begin{proof}[Proof of the claim]

The implication \eqref{eq:Vminuesef} $\Rightarrow$  \eqref{eq:nu=3} follows from~\refL{lemma:nu2}.

 \eqref{eq:V=9} $\Rightarrow$ ~\eqref{eq:Vminuesef}: suppose $|V|=9$. For any two disjoint edges $e,f\in A$ and any $g\in B$, we have $|g\cap e|=|g\cap f|=|g\cap (V\setminus(e\cup f))|=1$, which implies $V\setminus(e\cup f)\in C_3(B)=A$ so that $\nu(A)=3$.

 \eqref{eq:nu=3} $\Rightarrow$  \eqref{eq:V=9}: let $e,f,g$ be three disjoint edges in $A$. If there exists a vertex $v \not \in e \cup f\cup g$, then an edge $h \in B$ containing $v$  cannot cover $e, f, g$, contradicting the assumption that $B=C_3(A)$. 
 \end{proof}

   Back to the proof of the theorem, let $k=\max_{e_1,e_2,e_3\in A}|\cup_{i=1}^3e_i|$.~\eqref{eq:nu=3} follows from the next result. 
\begin{claim}\label{claim:k9}
    $k=9$.
\end{claim}
\begin{proof}[Proof of the claim]
By \refL{lemma:nu2} there are two disjoint edges, $e_1,e_2$ belonging to $A$. By the orthogonality condition, no edge of $B$ is contained in $e_1 \cup e_2$, so 
$V\setminus(e_1\cup e_2)$ is a cover for $B$, and hence it is of size at least $\tau(B)=3$. This shows that $|V|\ge 9$.
Adding to $e_1,e_2$ any edge meeting $V\setminus(e_1\cup e_2)$  proves $k>6$. 

Next we exclude the possibility  $k=8$. 
$k=8$ means that there exist edges  $e_1,e_2,e_3\in A$  such  that $e_1\cap e_2=e_2\cap e_3=\emptyset$ and $|e_1\cap e_3|=1$, say $e_1\cap e_3=\{x\}$. Take $y\in V\setminus (e_1\cup e_2\cup e_3)$. Then $star_B(y)\subseteq star_B(x)$, since for an edge $b\in B$ containing $y$ to meet all edges $e_1, e_2, e_3$ it must contain $x$.  By \refL{lemma:star=star}  $star_B(x)=star_B(y)$. Since $e_3$ is a cover of $B$, we have that $(e_3\setminus\{x\})\cup\{y\}$ is also a cover of $B$, so it is in $A$. Together with  $e_1$ and $e_2$ it contains $9$ vertices, contrary to the assumption that $k=8$.

It remains to exclude the case $k= 7$. Assuming for negation that this is the case,  since $\nu(A),\nu(B)\ge 2$, we can number the vertices so that $123,456\in A$, $714,825\in B$. Let $e$ be an edge of $A$ containing $9$. Since $k=7$, the edge $e$ contains vertices $x\in \{2,5\}$ and $y\in \{1,4\}$. Since $star_A(x)\neq star_A(9)$ and $star_A(y)\neq star_A(9)$, which implies $star_A(9)	\nsubseteq star_A(x)$ and $star_A(9)\nsubseteq star_A(y)$ by~\refL{lemma:star=star} so that $\{9\}\cup \{2,5\}\setminus\{x\}$ and $\{9\}\cup\{1,4\}\setminus\{y\}$ are each contained in some edge of~$A$. On the other hand,  any $f\in B$ containing $9$ must intersect the edges $123$ and $456$ of~$A$, but such~$f$ cannot include any element of $\{1,2,4,5\}$ by the orthogonality assumption. Therefore $936\in B$ and we have $\nu(B)=3$ and $|V|=k=9$, a contradiction.

This completes the proof of \refCl{claim:k9}. 
\end{proof}

By \refCl{claim:equivalence} this also proves the theorem.
\end{proof}

     In particular, this entails that in a $(3,3)$-loom $(A,B)$ there holds $\nu^*(A)=
     \nu(A)=\nu^*(B)=\nu(B)=3$.  

\begin{corollary}
    \refCon{conjmain1} is true for $r=s=3$, and hence \refCon{conj:GL} is true for $r=3$.
\end{corollary}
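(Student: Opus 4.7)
The plan is to upgrade the integer matching information provided by Theorem~\ref{thm:33loom} to the fractional matching information demanded by \refCon{conjmain1}. Let $\L = (A,B)$ be a $(3,3)$-loom. Theorem~\ref{thm:33loom}(\ref{eq:nu=3}) gives $\nu(A) = \nu(B) = 3$, so there exists a matching $M \subseteq A$ of size $3 = s$. By \refL{lemma:loompm}, $M$ covers $V(\L)$, so its characteristic function $\chi_M$ is a perfect fractional matching of $A$. Applying \refL{lemma:loompfm} yields $\nu^*(A) = 3$, and the symmetric argument applied to $B$ (using $\nu(B) = 3 = r$ and the fact that $(B,A)$ is a $(3,3)$-loom as well) gives $\nu^*(B) = 3$. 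LP duality then gives $\tau^*(A) = \tau^*(B) = 3$, which is exactly \refCon{conjmain1} for $r = s = 3$.

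To deduce \refCon{conj:GL} for $r = 3$, the plan is to combine the reduction to looms given at the beginning of Section~1 with the proposition (immediately after \refCon{conjmain1}) asserting that \refCon{conjmain1} forces $|V(\L)| = rs$. Suppose for contradiction that $A', B'$ are non-empty cross-intersecting $3$-partite hypergraphs sharing a common $3$-partition $V = V_1 \sqcup V_2 \sqcup V_3$, with $\tau(A' \cup B') > 4$. Since each side $V_i$ is a cover of $A' \cup B'$, we get $|V_i| \geq 5$ for every $i$, so $|V| \geq 15$. On the other hand, the standard reduction preceding the definition of loom replaces $A', B'$ by an associated $(3,3)$-loom $(A, B)$ on the same vertex set $V$ (using the convention $V(C_k(H)) = V(H)$). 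By the first paragraph, \refCon{conjmain1} holds for $(A, B)$, and the proposition mentioned above then gives $|V| = 9$, contradicting $|V| \geq 15$.

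The main work has already been carried out in Theorem~\ref{thm:33loom}; everything else is a routine combination of the lemmas of Section~1 with LP duality. The only points requiring attention are to keep track of vertex sets during the reduction to a loom, and to apply the lemmas \refL{lemma:loompm} and \refL{lemma:loompfm} symmetrically to both components, which is legitimate because $r = s = 3$ renders the two components interchangeable.
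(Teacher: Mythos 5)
Your proof is correct and follows essentially the same route as the paper: Theorem~\ref{thm:33loom} gives $\nu(A)=\nu(B)=3$, which (via the perfect-matching lemmas, equivalently the chain $3=\nu\le\nu^*=\tau^*\le\tau=3$) yields \refCon{conjmain1} for $(3,3)$-looms, and then the proposition forcing $|V(\L)|=9$ combines with the sides-are-covers observation exactly as in the paper. The only difference is cosmetic --- the paper pigeonholes to find a side of size at most $3$, while you argue that all three sides would need size at least $5$, contradicting $|V(\L)|=9$; these are the same count in contrapositive form.
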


     The next theorem characterizes   $(3,3)$-looms. If a $(3,3)$-loom $\L=(A,B)$ is decomposable, then 
     $(A,B)=(A_1,B_1)\cplus_1 (A_2,B_2)$ for a $(2,3)$-loom $(A_1,B_1)$ and a $(1,3)$-loom $(A_2,B_2)$, whose structure we know by~\refT{thm:chars2}. So we only need to know how indecomposable $(3,3)$-looms look. It turns out that there are just two such looms.

\begin{theorem}\label{thm:33loomchar}
    The only indecomposable $(3,3)$-looms are~$\L_{3,3}$ in~\refE{ex:looms}~\eqref{eq:a33loom} and~$\V_{3,3}$ in \refE{ex:vane}.
\end{theorem}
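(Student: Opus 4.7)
The plan is to leverage the strong structural consequences of Theorem~\ref{thm:33loom}: in a $(3,3)$-loom $|V|=9$, $\nu(A)=\nu(B)=3$, and each of $A$ and $B$ is closed under the ``third partition class'' operation (for any two disjoint edges $e,f$ of the same hypergraph, $V\setminus(e\cup f)$ is also an edge). I first fix perfect matchings $\mathcal{R}=\{R_1,R_2,R_3\}\subseteq A$ and $\mathcal{S}=\{S_1,S_2,S_3\}\subseteq B$, and use orthogonality $|R_i\cap S_j|=1$ to coordinate $V$ as the $3\times 3$ grid $v_{ij}=R_i\cap S_j$. Since each column $S_j$ lies in $B$, every $a\in A$ meets each column exactly once and is therefore the graph of a function $f_a\colon[3]\to[3]$, with the rows corresponding to the three constant functions. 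Symmetrically, edges of $B$ are parametrized by functions $g_b\colon[3]\to[3]$, with the columns as constants. The orthogonality condition $|a\cap b|=1$ then translates to: for every $f\in\tilde A$ and $g\in\tilde B$, the composition $g\circ f$ has exactly one fixed point.

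I next classify the non-constant functions in $\tilde A$ (and symmetrically in $\tilde B$). Any non-constant $f$ has image of size $2$ or $3$. In the image-$2$ case, $f$ is disjoint from the row indexed by the missing value, so the complement rule puts the ``swap'' function $\bar f$ (obtained by exchanging the two image values) into $\tilde A$; thus image-$2$ edges come in swap pairs. If $\tilde A$ contains two permutations that agree nowhere, the complement rule produces a third permutation making them a Latin-square triple in $\tilde A$. Because $\L$ is indecomposable, Lemma~\ref{lemma:decomposablenotconnected} makes $A$ connected, so $\tilde A$ strictly contains the three constants.

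The central dichotomy is whether $\tilde A$ contains a Latin-square triple of permutations. In the positive case, a direct fixed-point calculation shows that any permutation $\pi\in\tilde A$ forces every permutation $\sigma\in\tilde B$ to satisfy ``$\sigma\pi$ is a transposition'', so $\tilde B\cap S_3$ lies in the opposite parity coset of the alternating subgroup; applying the same analysis symmetrically to $\tilde B$ and invoking closure pins $(\tilde A,\tilde B)$ down to $\L_{3,3}$. In the negative case, $\tilde A$ contains at most one permutation (two pointwise-distinct permutations would force a Latin square), and the image-$2$ swap pairs together with orthogonality against $\tilde B$'s analogous pieces force the configuration of $\V_{3,3}$. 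The main obstacle is this second branch: a priori each of the three ``missing value'' classes of swap pair could appear, possibly alongside an isolated permutation, so many candidate $\tilde A$'s are consistent with the closure rule. Ruling out all but the $\V_{3,3}$ configuration requires cross-checking orthogonality on every pair in $\tilde A\times\tilde B$ and using the exactness $\tilde B=C_3(\tilde A)$ (not just containment) to bootstrap constraints back onto $\tilde A$; for example, a pattern like ``$\tilde A=$ rows $+$ one lone permutation'' fails because $\tilde B$ would then have to contain $g$'s with several fixed points, conflicting with orthogonality. A case analysis organized by which ``missing value'' classes of swap pair appear in $\tilde A$ finishes the job, ultimately matching the configuration of $\V_{3,3}$ up to a relabeling of the grid axes.
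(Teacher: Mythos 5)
Your setup is sound, and it is essentially the paper's own first move: fix perfect matchings in $A$ and in $B$ (they exist by \refT{thm:33loom} and \refL{lemma:loompm}), coordinatize $V$ as a $3\times 3$ grid, and note that orthogonality against the three columns (resp.\ rows) makes each edge of $A$ (resp.\ $B$) the graph of a function $[3]\to[3]$. The translation of $|a\cap b|=1$ into ``$g\circ f$ has exactly one fixed point'' is correct, as are the swap-pair and Latin-square-triple consequences of \refT{thm:33loom}\eqref{eq:Vminuesef}, and your dichotomy parallels the paper's split on whether $A$ contains a second ``permutation'' edge.

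There is, however, a genuine gap, and one step is simply wrong as stated. In the negative branch you conclude that $\tilde{A}$ contains at most one permutation because ``two pointwise-distinct permutations would force a Latin square.'' But two distinct permutations of $[3]$ need not be pointwise distinct: they can agree at exactly one point (e.g.\ the identity and a transposition). The corresponding edges then share a vertex, \refT{thm:33loom}\eqref{eq:Vminuesef} does not apply to them, and no third permutation is produced. So the configuration ``two permutation edges meeting in a vertex, no Latin-square triple'' is covered by neither of your branches and must be excluded by a separate argument (this is exactly the kind of situation the paper attacks with \refL{lemma:star=star} and disjoint-edge manipulations). Beyond that, in both branches the decisive work is announced rather than done: in the positive branch you still have to rule out image-$2$ functions in $\tilde{A}$ and $\tilde{B}$ and show that $\tilde{B}$ really contains a full coset of $A_3$, and in the negative branch you acknowledge that many candidate sets of swap pairs are consistent with the closure rule and that ``a case analysis finishes the job'' --- but that case analysis \emph{is} the theorem. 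As written, the proposal is a credible plan with a correctly chosen framework, not a proof.
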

\begin{proof}
    Let $\L=(A,B)$ be an indecomposable $(3,3)$-loom. Let $M,N$ be perfect matchings in $A$ and in $B$, respectively. 
     Number the vertices so that  ``columns" $M=\{147,258,369\}$ and $N=\{123,456,789\}$ 
     and view them as the sets of rows (resp. columns) of a $3 \times 3$ grid. Since $A$ is connected, without loss of generality, we may assume that at least one of the edges of  $159$ or $158$ is in $A$.

\begin{enumerate}[(I)]
    \item $159\in A$.  By~\refL{lemma:nu2}, there exists an edge of $A$ disjoint from $159$.
\begin{enumerate}[(1)]
    \item Suppose there exists in $A$ another ``permutation" edge (whose vertices are in distinct columns and rows), say $267$. Then by~\refL{thm:33loom},  $[9]\setminus (159\cup 267)=348$ is in $A$.
    By renaming the vertices (swap 1 with 7, 5 with 8, and 3 with 6), the current known six edges of $A$ become ``rows" and ``columns" and the three edges of $B$ become ``even permutation" edges. Since $B$ is also connected and the edges of $B$ are orthogonal to those in $A$, then the remaining edges of $B$ must be ``odd permutation" edges. By~\refL{lemma:nu2} three odd permutation edges are in $B$. Therefore, in this case, the loom is isomorphic to $\L_{3,3}$ in~\refE{ex:looms}~\eqref{eq:a33loom}.

    \item  Suppose there is no permutation edge of $A$ other than $159$. By~\refL{lemma:nu2}, there exists an edge of $A$ disjoint from $159$. Without loss of generality,  $247 \in A$.
Then $368=[9]\setminus (159\cup 247)\in A$. Also $158=[9]\setminus (247\cup 369)\in A$ and $259=[9]\setminus(147\cup 368)\in A$. We now have eight edges in $A$,  and we turn to $B$. Since $B$ is connected, by the orthogonality of $A$ and $B$  and by the covering property, the only possible edges to connect $123$ with $456$ or $789$ are $126$, or $345$, or $357$.
\begin{enumerate}[(i)]
    \item If $357\in B$, then by~\refL{lemma:nu2} and~\refT{thm:33loom}, there is an edge $e$ of $B$ containing 6 and being disjoint from $357$. By the orthogonality, this edge cannot contain $8$ or $9$ as $369,368\in A$. Since $e$  is disjoint from $357$ it does not contain $5$, so to cover $259\in A$, it must contain $2$. By the orthogonality, $e$ cannot contain $4$ or $7$, as $247\in A$.  Since $e$ is disjoint  from  $357$, to cover $147\in A$,   it must be $126$. Then $489=[9]\setminus (357\cup 126)\in B$. This implies $345=[9]\setminus (126\cup 789)\in B$ and $756=[9]\setminus (489\cup 123)\in B$. We now have eight edges of $B$, and it is easy to check that $(A,B)$ is isomorphic to~$\V_{3,3}$ in~\refE{ex:vane}.
    \item Note that $126\cup 345\cup 789 =[9]$. If one of $126$ and $345$ is in $B$, then the other is also in $B$. The edge $789$ is connected to $123$ or $456$ in $B$, symmetrically, one of $756$ or $489$ or $357$ is in $B$. If $357$ is in~$B$, then we have done by the above case. We may assume that both $756$ and $489$ are in $B$ (as $756\cup 489\cup 123=[9]$). Then $357=[9]\setminus (126\cup 489)\in B$ and we are done.    
\end{enumerate}

\end{enumerate}
\item Suppose $158$ is in $A$ and no permutation edge (that intersects each of $147,258,369$ at one vertex) is in $A$.
    Then $247=[9]\setminus(158\cup 369)$ is in $A$. Since $A$ is connected, we may assume there is an edge of $A$ contained in $258\cup 369$.
    \begin{enumerate}[(1)]
        \item $269$ or $358$ is in $A$. Note that $147\cup 269\cup 358=[9]$. If one of them is in $A$, then so is the other. Then $169=[9]\setminus (247\cup 358)$ is in $A$ and then $347=[9]\setminus(169\cup 258)$ is in $A$. 
        Let us now go back to $B$. By assumption it is connected, and hence $123$ should be connected to $456$ or to $789$. But for any $x\in 123$ and $y\in 456\cup 789$, $xy$ is contained in some edge of $A$, contradicting the orthogonality of $A$ and $B$.
\item $259$ and $368=[9]\setminus(259\cup 147)$ are in $A$. Then $159=[9]\setminus (247\cup 368)$ is in $A$, a contradiction to the assumption that no permutation edge is in $A$.
    \end{enumerate}
\end{enumerate}
We complete the proof of the theorem.
\end{proof}

{\noindent\bf Acknowledgements.}  The authors are grateful to Zilin Jiang and  Martin Loebl for discussions on an earlier version of the paper. We thank the anonymous referees for their comments.

 \end{document}